\documentclass[11pt]{amsart}
\usepackage{amscd,amssymb}
\theoremstyle{plain}

\pagestyle{myheadings}

\newtheorem{thm}[equation]{Theorem}
\newtheorem{prop}[equation]{Proposition}
\newtheorem{cor}[equation]{Corollary}

\newtheorem{lemma}[equation]{Lemma}

\numberwithin{equation}{section}

% The following are only to be used in Math 

%\newcommand{\qed}{\hfill $\Box$ \hfill \\}

\def\Hom{{\mathrm{Hom}}}

\def\Aut{{\mathrm{Aut}}}

\def\SL{{\mathrm{SL}}}

\def\Sp{{\mathrm{Sp}}}
\def\Spin{{\rm Spin}}

\def\SU{{\mathrm {SU}}}
\def\U{{\mathrm U}}

\def\GL{{\mathrm{GL}}}
\def\PGL{{\rm PGL}}

\def\SO{{\rm{SO}}}
\def\Ind{{\mathrm{Ind}}}

\def\O{{\mathrm O}}

\setlength{\oddsidemargin}{0.2in}
\setlength{\evensidemargin}{0.2in}
\setlength{\textwidth}{6.1in}

% \subjclass[2000]{22E46}

\begin{document}
\title{A family of $\mathrm{Spin}(8)$ dual pairs: the case of real groups}

\begin{abstract} 
Exceptional groups of type $E_6$ contain dual pairs where one member is $\mathrm{Spin}(8)$, and the other is 
$T\rtimes  \mathbb Z/2\mathbb Z$, where $T$ is a two-dimensional torus and 
the non-trivial element in $\mathbb Z/2\mathbb Z$ acts on $T$ by the inverse involution. 
We describe the correspondence of 
representations arising by restricting the minimal representation.  
\end{abstract} 

%\author{Jeffrey Adams} 
%\address{Department of Mathematics, University of Maryland, College Park, MD}
%\email{jeffreydavidadams@gmail.com} 

\author{Wee Teck Gan} 
\address{Department of Mathematics, National University of Singapore, Singapore}
\email{matgwt@nus.edu.sg}

\author{Hung Yean Loke} 
\address{Department of Mathematics, National University of Singapore, Singapore}
\email{matlhy@nus.edu.sg}

\author{Annegret Paul} 
\address{Department of Mathematics, Western Michigan University, Kalamazoo, MI}
\email{annegret.paul@wmich.edu}

\author{Gordan Savin}
\address{Department of Mathematics, University of Utah, Salt Lake City, UT}

\email{savin@math.utah.edu}

\maketitle

\section{Introduction}  
Let $\mathfrak h$ be a simple, split Lie algebra of type $F_4, E_6, E_7$ or $E_8$, over a field $F$ of characteristic 0.  Let $H$ be the 
group of automorphism of $\mathfrak h$. A feature of exceptional groups is that $H$ contains a dual reductive pair 
\[ 
G_2 \times \Aut(J_s) \subset H 
\] 
where $G_2$ is the split exceptional split group of type $G_2$, and $\Aut(J_s)$ is the group of automorphisms of a split Freudenthal-Jordan algebra  $J_s$ of dimension 
6, 9, 15 and 27, respectively. Typical examples of such Jordan algebras, not necessarily split, are the algebras of $3\times 3$ hermitian symmetric matrices with coefficients 
in a composition algebra of dimension 1, 2, 4, and 8, respectively. Over real numbers typical examples of composition algebras are $\mathbb R$, $\mathbb C$, 
$\mathbb H$ and $\mathbb O$, the algebra of octonions.  
For a precise definition of these algebras the reader may consult {\it The Book of Involutions} \cite{KMRT}. 
 The group $G_2$ is itself the group of automorphisms of the split octonion algebra $\mathbb O_s$. 
By functoriality of Galois cohomology we have a natural map 
 \[ 
 H^1(F, G_2) \times H^1(F, \Aut(J_s))  \rightarrow H^1(F, H). 
 \] 
Thus a  form of $\mathbb O_s$ and a form of $J_s$ (a Freudenthal-Jordan algebra $J$) give rise to a form of $H$. 
This is the Tits construction from the Galois cohomology point of view. In this paper we shall keep $\mathbb O_s$ and vary $J$. The resulting form of $H$ is 
denoted by $H_J$. It contains a dual reductive pair (with $G_2$ split) 
\[ 
G_2 \times \Aut(J) \subset H_J. 
\] 

\smallskip 
Cubic etal\'e algebras over $F$ naturally appear as subalgebras of $J$. Indeed, 
every element in $J$ satisfies its characteristic polynomial. For Freudenthal-Jordan algebras the characteristic polynomials are cubic. Thus a generic element in $J$ 
generates a cubic etal\'e subalgebra. 
 Let $i: E\rightarrow J$ be an embedding of a cubic etal\'e algebra $E$ into $J$. Let $\Aut(E\rightarrow J)$ be the 
subgroup of  $\Aut(J)$ consisting of all automorphisms of $J$ fixing $i(E)$ pointwise. The centralizer of $\Aut(E\rightarrow J)$  in $H_J$ is $G_E$, 
a quasi-split group of absolute type $D_4$ corresponding to $E$. Let $Z_E$ be the center of $G_E$. Then 
\[
G_E \times_{Z_E}\Aut(E\rightarrow J)\subset H_J 
\] 
is a dual reductive pair in $H_J$, in a see-saw position relative to $G_2 \times \Aut(J)$. 

\smallskip 

In this paper we shall exclusively be interested in the case when the type of 
$H$ is $E_6$. So it is perhaps a good time to describe the groups involved in more details. 
In this case the split algebra $J_s$ is $M_3(F)$, the algebra of $3\times 3$ matrices with coefficients in $F$, and 
$\Aut(J)=\PGL_3\rtimes \mathbb Z/2\mathbb Z$, where $\PGL_3$ acts by conjugation and 
  the factor $\mathbb Z/2\mathbb Z$ by the transpose map. 
 Let $E_s=F^3$, the split cubic etal\'e algebra, and let $i: E_s \rightarrow J_s$ be the embedding given by the diagonal matrices. 
 It clear that $\Aut(E_s\rightarrow J_s)$ is $T\rtimes  \mathbb Z/2\mathbb Z$, where $T$ is the maximal torus in $\PGL_3$, and 
 the conjugation action of the non-trivial element in $\mathbb Z/2\mathbb Z$ on $T$ is the inverse involution of $T$. The second dual pair is not a maximal 
 subgroup in $H_J$. Indeed, the normalizer of $T$ in $\PGL_3$ contains $S_3$, the group of permutations of three letters, and this group acts simultaneously on 
 $T\rtimes  \mathbb Z/2\mathbb Z$ and on its centralizer, a split simply connected group of type $D_4$, by outer automorphisms. 
 
 \smallskip 
 The map of Galois cohomology 
 \[ 
 H^1(F, \PGL_3 \rtimes \mathbb Z/2\mathbb Z) \rightarrow H^1(F, \mathbb Z/2\mathbb Z) = \{\text{etal\'e quadratic $F$-algebras}\} . 
 \] 
 attaches a quadratic etal\'e algebra $K=K(J)$ to any $F$-form  $J$ of $M_3(F)$.  If $F=\mathbb R$ then 
  the group $\Aut(E\rightarrow J)$ depends on $E$ and $K$ only \cite{GS14}. Thus 
 we shall adopt the following notation: 
 \[ 
 T_{E,K}:=\Aut(E\rightarrow J).
 \]

 The goal of this paper is to compute the correspondence of representations obtained by restricting the minimal representation $V$ of $H_J$ to the dual 
 pair  $G_E \times T_{E,K}$ for real groups, i.e. when $F=\mathbb R$. These results were announced in \cite{GS22} where the case of $p$-adic $F$ was treated. 
  If $F=\mathbb R$ then the cubic etal\'e algebras are $\mathbb R^3$ and $\mathbb R \times \mathbb C$. 
 The Jordan algebras are  $M_3(\mathbb R)$ and $H_3(\mathbb C)$, the algebra of hermitian symmetric matrices in $M_3(\mathbb C)$ with respect to an involution of the 
 second kind on $M_3(\mathbb C)$.  The quadratic etal\'e algebra $K=K(J)$ is $\mathbb R^2$ and $\mathbb C$, respectively. 
 Thus there are essentially 4 cases to consider, as we vary $E$ and $K$.  In each case $T_{E,K}$  is a semi-direct product of  
 the two-dimensional torus $T^{\circ}_{E,K}$ and $\mathbb Z/2\mathbb Z$. 
 Thus almost all irreducible representations of $T_{E,K}$ are 2-dimensional and the restriction to $T_{E,K}^{\circ}$ 
  is a direct sum $\chi\oplus\chi^{-1}$ of two characters. Such a representation of $T_{E,K}$ is denoted by $\rho(\chi)$. 
 Then there are two representations trivial on $T_{E,K}^{\circ}$, the trivial denoted by $\rho(1)$, and the non-trivial, denoted by $\epsilon$.  
 For every unitary representation $\rho$  of  $T_{E,K}$  we compute the lift $\Theta(\rho)$ to $G_E$. 
 It is always non-zero and irreducible, and we compute its Langlands parameter.   
 The answer is uniform in $\chi$ except, of course, for the lift of $\epsilon$, which is the most difficult case. 
 
 \smallskip 
 This is accomplished as follows. We firstly determine the infinitesimal character of $\Theta(\rho)$.  
Secondly, as a simple consequence of branching rules, the $K$-types of $\Theta(\rho)$ are contained in a cone. This severely limits the possible 
 irreducible subquotients of $\Theta(\rho)$, when we compare with the $K$-types 
 of irreducible representations of $G_E$ with the same infinitesimal character. For this comparison we only need to know multiplicities of finitely 
  many $K$-types, and this information is obtained using the {\tt atlas} software \cite{A}, when dealing with $\rho=1$ and $\epsilon$.
   Thirdly, as the key additional input, 
  we prove that certain $K$-types appear with multiplicity one in $\Theta(\rho)$ and this implies irreducibility of 
 $\Theta(\rho)$. 
 
% The nature of representations of $G_E$ obtained in the lift can be perhaps best described using the moment map. 

\section{Groups}

\subsection{Quasi-split groups of type $D_4$} 
 Let $G$ be the simply connected group of Lie type $D_4$ over a field $F$ of characteristic 0. This group is exceptional 
 in the sense that its Dynkin diagram has a group of automorphisms isomorphic to $S_3$, the symmetric group of permutations of three non-branching vertices. 
 
 \begin{picture}(400,110)(110,25)

\put(321,73){\line(2,-3){14}}

\put(270,72){1}

\put(282,76){\circle{6}}
\put(285,76){\line(1,0){30}}
\put(319,76){\circle{6}}

\put(326,72){4}

\put(337,103){\circle{6}}
\put(337,49){\circle{6}}

\put(321,73){\line(2,-3){14}}
\put(321,79){\line(2,3){14}}

\put(343,99){2}
\put(343,45){3}

\end{picture}

We realize the root system in the standard way in $\mathbb R^4$ such that the set of simple roots is 
 $\{ \alpha_1 = \varepsilon_1 - \varepsilon_2, \alpha_4 = \varepsilon_2 - \varepsilon_3,,  \alpha_2 = \varepsilon_3 - \varepsilon_4, \alpha_3 = \varepsilon_3 + \varepsilon_4\}$ 
 with the labeling as in the picture. 
The highest root is $\beta=\varepsilon_1 + \varepsilon_2 = (1,1,0,0)$.

\smallskip 
A cubic etal\'e algebras $E$ over $F$ is an $F$-form of the split cubic etal\'e algebra $E_s= F^3$. The group of $F$-automorphisms of $E_s$ is also $S_3$ thus the 
isomorphism classes of quasi-split forms of $G$ correspond to  the isomorphism classes of cubic etal\'e algebras $E$ over $F$.  
Let $G_E$ be the group of $F$-points of the quasi-split form of $G$ corresponding to $E$.  The group $G_E$ has a maximal parabolic $P_E=M_EN_E$ corresponding to 
the branching vertex of the Dynkin diagram. The Levi factor $M_E$ is isomorphic to 
\[ 
\GL_2(E)^{\det} =\{ g\in \GL_2(E) | \det(g)\in F^{\times} \}. 
\] 
The nilpotent radical $N_E$ is a 9-dimensional 2-step unipotent group with one dimensional center $[N_E,N_E]$. The group $M_E$ acts on the center of $N_E$ 
 by the determinant character. Let $V_E=N_E/[N_E,N_E]$, and let $V$  be the standard 2-dimensional representation of $\GL_2(\bar F)$. 
 Then $V_E(\bar F) \cong V \otimes V\otimes V$, as $M_E(\bar F)\cong \GL_2(\bar F^3)^{\det}$-modules, where the standard 
action of $\GL_2(\bar F^3)^{\det}$ on $V \otimes V\otimes V$ is twisted by $\det^{-1}$. Hence it is easy to see that 
the modular character $\rho_{N_E}$ of $M_E$ is 
\[ 
\rho_{N_E}=|\det|^5. 
\] 
Let $\pi$ be a tempered representation of $M_E$. Using the normalized induction 
induce $\pi \otimes |\det|^s$ from $P_E$ to $G_E$.  If $s>0$, this is a standard module. 
Let $L(\pi, s)$ be the corresponding Langlands quotient. 

%The representation $L(\pi, s)$ is also the unique submodule of the representation obtained by inducing $\pi \otimes |\det|^s$ from the opposite parabolic $\bar P_E=M_E\bar N_E$. This point of view is more useful to us.   

 \subsection{Real groups $T_{E,K}$ }  
 Assume $F=\mathbb R$.  Every real Freudenthal-Jordan algebra $J$ of dimension 9 is isomorphic to $M_3(\mathbb R)$ or  $H_3(\mathbb C)$, 
 the set of fixed points of an involution of the second kind on $M_3(\mathbb C)$.  
 Recall the quadratic etal\'e algebra $K=K(J)$ which arises from the map of Galois cohomology 
 \[ 
 H^1(F, \PGL_3 \rtimes \mathbb Z/2\mathbb Z) \rightarrow H^1(F, \mathbb Z/2\mathbb Z) = \{\text{etal\'e quadratic $F$-algebras}\} . 
 \] 
Over  $\mathbb R$,  there are two  quadratic etal\'e algebras: $\mathbb R^2$ and $\mathbb C$. If 
$J=M_3(\mathbb R)$ then $K=\mathbb R^2$. 
If $J=H_3(\mathbb C)$, then $K=\mathbb C$. 
 The group $H_J$ depends only on $K$. It is split if $K=\mathbb R^2$, and quasi-split if $K=\mathbb C$ \cite{LS15} .

 \smallskip 
 Our next task is to describe the possible embeddings. In the real case there are 2 cubic etal\'e algebras: 
 $E=\mathbb R^3$ and $E=\mathbb R \times \mathbb C$. If $J$ is isomorphic to 
$M_3(\mathbb R)$ then both $E$ embed into $M_3(\mathbb R)$ and these embeddings are unique up to conjugation. So assume that $J$ is isomorphic to 
 $ H_3(\mathbb C)$, the set of fixed points of an involution of the second kind on $M_3(\mathbb C)$. 
 The involution of the second kind  on $M_3(\mathbb C)$ on arises from a non-degenerate hermitian-symmetric form $h$ on $\mathbb C^3$. Let 
 $h=\epsilon_1 z_1 \bar z_1 + \epsilon_2 z_2 \bar z_2 + \epsilon_3 z_3 \bar z_3$, where $\epsilon_i=\pm 1$. 
 We have 8 choices for signs, but we get 4 different involutions since $h$ and $-h$ give the same involution. In this way we get 4 Jordan algebras $J$  together with 
 an embedding of $\mathbb R^3$ into $J$  as diagonal matrices. 
 One of this Jordan algebras is definite, i.e. $h$ is definite, while three other are indefinite and isomorphic to each other.
  However the three embeddings are not isomorphic, and we get 4 classes of embeddings in all. 
 These represent all isomorphism classes of embeddings of $E=\mathbb R^3$ into such Jordan algebras. Finally, $E=\mathbb R\times \mathbb C$ does not embed into the 
 definite Jordan algebra and there is a unique embedding of this $E$ into an indefinite Jordan algebra. For each embedding it is easy to compute 
 $T_{E,K}$. It is always a semi-direct product $T_{E,K}^{\circ} \rtimes \mathbb Z/2\mathbb Z$ such that the conjugation action of the non-trivial element in 
 $\mathbb Z/2\mathbb Z$ on $T^{\circ}_{E,K}$ is the inverse involution. 
  The possible cases of the two-dimensional torus $T^{\circ}_{E,K}$  are tabulated in the following table, where $\mathbb T$ the group of complex numbers of norm one. 
\[
\begin{array}{c||c|c} 
& E=\mathbb R^3 & E=\mathbb R \times \mathbb C \\
\hline 
K=\mathbb R^2 & (\mathbb R^{\times })^3/\Delta(\mathbb R^{\times}) & (\mathbb R^{\times } \times \mathbb C^{\times}) /\Delta(\mathbb R^{\times}) \\ 
\hline 
K=\mathbb C &  (\mathbb T)^3/\Delta(\mathbb T) & (\mathbb T \times \mathbb C^{\times}) /\Delta(\mathbb T)
\end{array} 
\]

\smallskip 
We introduce a refined notation for characters of these tori. A character $\chi$ of  $(\mathbb R^{\times })^3/\Delta(\mathbb R^{\times})$ is a triple of 
characters $(\chi_1, \chi_2, \chi_3)$ of $\mathbb R^{\times}$ such that $\chi_1\cdot \chi_2\cdot \chi_3=1$. A character of $\mathbb T$ is represented by an integer. Thus 
a character $\chi$ of  $(\mathbb T)^3/\Delta(\mathbb T)$ is represented by a triple of integers $(n_1, n_2, n_3)$ such that $n_1 + n_2 + n_3=0$. In the remaining two cases 
a character of the torus is identified with a pair of characters $(\chi_{\mathbb R}, \chi_{\mathbb C})$, such that  $\chi_{\mathbb R}\cdot \chi_{\mathbb C}=1$ on  
$\Delta(\mathbb R^{\times})$ ,
 and with a pair $(m, \chi_{\mathbb C})$, where $m \in \mathbb Z$, such that the restriction of $\chi_{\mathbb C}$ to $\mathbb T$  is given 
by $z\mapsto z^{-m}$.

\section{Main statement}  

\subsection{Representations of $T_{E,K}$}  
Let $\chi$ be a character of $T^{\circ}_{E,K}$. If $\chi\neq \chi^{-1}$, let $\rho(\chi)\cong \rho(\chi^{-1})$ be the unique irreducible representation of 
$T_{E,K}$ such that the restriction to 
$T^{\circ}_{E,K}$ is $\chi\oplus \chi^{-1}$. If $\chi=\chi^{-1}$, then $\chi$ extends to a character of $T_{E,K}$, in two ways, denoted by $\rho(\chi)^{\pm}$. 
These two representations 
are indistinguishable unless $\chi=1$. Then one extension is the trivial representation, denoted by $\rho(1)$, and the other the sign representation $\epsilon$.  
Note that non-trivial quadratic characters $\chi$ appear only in the split case.

\subsection{Some tempered representations of $M_E$}  \label{packets} 
 To every unitary character $\chi$ of $T_{E,K}$ we shall attach 
a packet $P(E,K,\chi)=P(E,K,\chi^{-1})$ of tempered representations of $\GL_2(E)^{\det}$, 
obtained by restricting an irreducible representation of $\GL_2(E)$. 
We need additional notation. Let $F$ be a local field, and $(\mu_1,\mu_2)$ a pair of characters of  $F^{\times}$.  
Let $\mu_1\times \mu_2$ be the unique infinite-dimensional sub-quotient of the principal series representation of $\GL_2(F)$
obtained by normalized parabolic induction from the pair of characters. 
Let $\omega : \mathbb R^{\times} \rightarrow \{\pm 1\}$ be the sign character. It is the unique non-trivial quadratic character of $\mathbb R^{\times}$. 
Let $\nu : \mathbb R^{\times} \rightarrow \mathbb R^{\times}$ be $\nu(x)=x$, for all $x\in \mathbb R^{\times}$. 
Let $n\in \mathbb Z$. Observe that $\nu^n \times \omega$, when restricted to $\SL_2(\mathbb R)$, is a sum of two (limits of) 
 discrete series representations with the lowest $\SO_2$-types $\pm(|n|+1)$. 

\smallskip 
\underline{Case $E=\mathbb R^3$ and $K=\mathbb R^2$.} Let $\chi=(\chi_1,\chi_2,\chi_3)$ be a unitary character of $(\mathbb R^{\times })^3/\Delta(\mathbb R^{\times})$.  
The packet $P(E,K,\chi)$ consists of representations appearing in the restriction to $\GL_2(\mathbb R^3)^{\det}$ of
\[
(\chi_1\times 1) \otimes (\chi_2\times 1)\otimes (\chi_3\times 1). 
\]
 This representation is irreducible when restricted to $\SL_2(\mathbb R^3)$ unless $\chi_i=\omega$ for at least one $i$. The group 
$\GL_2(\mathbb R^3)^{\det}$ is large enough so that the restriction is still irreducible if precisely one $\chi_i$ is $\omega$. In view of the relation 
$\chi_1\cdot \chi_2\cdot \chi_3=1$, at most two $\chi_i$ can be $\omega$, and this is precisely when $\chi$ is a non-trivial quadratic character. Then and only then 
the packet consists of two elements. The standard intertwining operator provides an identification of $P(E,K,\chi)$ and $P(E,K,\chi^{-1})$. 

\smallskip 
\underline{Case $E=\mathbb R^3$ and $K=\mathbb C$.}
Let $\chi=(n_1,n_2,n_3)$ be a character of $(\mathbb T)^3/\Delta(\mathbb T)$.  
The packet $P(E,K,\chi)$ consists of representations appearing in the restriction to $\GL_2(\mathbb R^3)^{\det}$ of
\[
(\nu^{n_1}\times \omega) \otimes (\nu^{n_2}\times \omega)\otimes (\nu^{n_3} \times \omega). 
\]
The restriction to $\SL_2(\mathbb R^3)$ consists of 8 summands, hence the packet $P(E,K,\chi)$ consists of 4 representations.

\smallskip 
\underline{Case $E=\mathbb R\times \mathbb C$ and $K=\mathbb R^2$.}
 The restriction from $\GL_2(\mathbb R\times \mathbb C)$ to $\GL_2(\mathbb R\times \mathbb C)^{\det}$ is always irreducible, hence the packets are singletons. 
Let $\chi=(\chi_{\mathbb R}, \chi_{\mathbb C})$ be a unitary character of $(\mathbb R^{\times}\times \mathbb C^{\times} )/\Delta(\mathbb R^{\times})$.  
The packet $P(E,K,\chi)$ consists of the restriction to $\GL_2(\mathbb R \times \mathbb C)^{\det}$ of 
\[
(\chi_{\mathbb R}\times 1) \otimes (\chi_{\mathbb C}\times 1). 
\]

\smallskip 
\underline{Case $E=\mathbb R\times \mathbb C$ and $K=\mathbb C$.} 
We are again restricting from $\GL_2(\mathbb R\times \mathbb C)$ to $\GL_2(\mathbb R\times \mathbb C)^{\det}$ hence the packets are singletons. 
Let $\chi=(m, \chi_{\mathbb C})$ be a unitary character of $(\mathbb T\times \mathbb C^{\times} )/\Delta(\mathbb T)$.  
The packet $P(E,K,\chi)$ consists of the restriction to $\GL_2(\mathbb R \times \mathbb C)^{\det}$ of 
\[
(\nu^m \times \omega) \otimes (\chi_{\mathbb C}\times 1). 
\]

\smallskip 
Summarizing, we have 4 families of tempered packets 
$P(E,K,\chi)=P(E,K,\chi^{-1})$ of $\GL_2(E)^{\det}$, parameterized by unitary characters $\chi$ of $T_{E,K}$. If $E=\mathbb R^3$ and $K=\mathbb C$, then 
$|P(E,K,\chi)|=4$. As a part of our main result, the 4 members of this packet are naturally parameterized by 4 embeddings $\mathbb R^3 \rightarrow H_3(\mathbb C)$. 
If $\chi$ is a non-trivial quadratic character (this happens only  if $E=\mathbb R^3$ and $K=\mathbb R^2$) then $|P(E,K,\chi)|=2$. Let $\pi^{+}(\chi), \pi^{-}(\chi)$ be its constituents. 
Otherwise $|P(E,K,\chi)|=1$ and its unique element will be denoted by $\pi(\chi)$. 

\subsection{Main result}  Let $V$ be the Harish-Chandra module of the minimal representation of $H_J$. Consider the dual pair 
$G_E \times T_{E,K}$ corresponding to an embedding $E\rightarrow J$.  Let $K_E$ be a maximal compact subgroup of $G_E$. 
Let $\rho$ be an irreducible representation of $T_{E,K}$. 
Then there exists a $(\mathfrak g_E, K_E)$-module $\Theta(\rho)$, such that 
\[ 
\Theta(\rho)\otimes \rho \cong V /\cap_{\varphi \in \Hom(V, \rho)} \ker(\varphi) 
\] 
where $\varphi$ are homomorphisms in the sense of Harish-Chandra modules. 
%We note that $\Theta(\rho)$ is naturally a $(\mathfrak g_E, K_E)$-module, where $K_E$ is the maximal compact subgroup of $G_E$. 
Observe that a priori $\Theta(\rho)$ could be trivial or infinite length otherwise. 
The following is the main result of this paper, that will be proved in the next two section. 

\smallskip

\begin{thm} \label{T:main} 
 Let $G_E\times T_{E,K}$ be the dual pair arising from an embedding $E\rightarrow J$.
Let $\chi$ be a unitary character of $T_{E,K}^{\circ}$. 
\begin{itemize} 
\item  If $E\rightarrow J$ is not one of the 4 embeddings $\mathbb R^3 \rightarrow H_3(\mathbb C)$, then 
 $\Theta(\rho(\chi))\cong L(\pi(\chi), 1)$, unless  $\chi$ is quadratic and non-trivial, and then 
$\Theta(\rho^{\pm}(\chi))\cong L(\pi^{\pm}(\chi),1)$. 
\item If $E\rightarrow J$ is one of the 4 embeddings $\mathbb R^3 \rightarrow H_3(\mathbb C)$, then 
 $\Theta(\rho(\chi))\cong L(\pi, 1)$, where $\pi\in P(E,K,\chi)$. As we run through all 4 embeddings $\mathbb R^3 \rightarrow H_3(\mathbb C)$, 
 $\pi$ runs through the 4 representations in $P(E,K,\chi)$.
 \end{itemize} 

\end{thm}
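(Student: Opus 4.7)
The plan is to execute the three-step program announced at the end of the introduction in a uniform manner in the unitary character $\chi$, then address the extra bookkeeping needed for the sign representation $\epsilon$ and for packets of size greater than one.

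First I would determine the infinitesimal character of $\Theta(\rho(\chi))$. Since the minimal representation $V$ of $H_J$ carries an explicit infinitesimal character, the Harish-Chandra isomorphism combined with the dual-pair embedding $G_E \times T_{E,K} \hookrightarrow H_J$ translates the action of $\mathfrak Z(\mathfrak g_E)$ on $\Theta(\rho(\chi))$ into a function of $\chi$ alone. Together with the modular character $\rho_{N_E} = |\det|^5$ and the usual $\rho/2$-shift appearing in minimal representations, this should match exactly the infinitesimal character of the candidate $L(\pi(\chi), 1)$.

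Second, I would constrain the $K_E$-types via branching. A maximal compact subgroup $K_H$ of $H_J$ containing $K_E \cdot T_{E,K}^{\mathrm{cpt}}$ acts on $V$ with a well-understood cone of $K_H$-types, characteristic of minimal representations. Extracting the $\chi$-isotype on $T_{E,K}^{\mathrm{cpt}}$ forces the $K_E$-types of $\Theta(\rho(\chi))$ into an explicit cone. Comparing this cone with the $K_E$-spectra of irreducible $(\mathfrak g_E, K_E)$-modules with the predicted infinitesimal character, using the \texttt{atlas} software to list candidates and tabulate low $K_E$-type multiplicities, should cut the set of possible irreducible subquotients of $\Theta(\rho(\chi))$ down to the members of $\{L(\pi, 1) : \pi \in P(E,K,\chi)\}$. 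For general unitary $\chi$ only the cases $\chi = 1$ and $\chi = \epsilon$ require an \texttt{atlas} calculation; the rest should follow by coherent continuation in the continuous parameter of $\chi$.

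The hard step is upgrading \emph{subquotient} to \emph{equal}. Here I would pick a specific $K_E$-type $\tau$ of the target $L(\pi(\chi), 1)$ which is not shared by any competing irreducible in the infinitesimal-character block, and prove that $\tau$ occurs in $\Theta(\rho(\chi))$ with multiplicity exactly one. The upper bound on the multiplicity should come from a direct branching computation inside a single carefully chosen $K_H$-type of $V$; the lower bound is automatic once $L(\pi(\chi), 1)$ is known to appear as a subquotient. Multiplicity one then precludes any additional subquotient, forcing irreducibility and the identification $\Theta(\rho(\chi)) \cong L(\pi(\chi), 1)$. I expect this multiplicity-one verification, especially for $\chi = \epsilon$ and for the four-element packet attached to $E = \mathbb R^3$, $K = \mathbb C$, to be the main obstacle. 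For the refinement matching the four embeddings $\mathbb R^3 \hookrightarrow H_3(\mathbb C)$ with the four constituents of $P(E,K,\chi)$, I would track the signatures of the defining hermitian form $h$ through the branching, so that the real form of each $\SL_2$-factor appearing in the tempered induction is dictated by the embedding; for non-trivial quadratic $\chi$ in the split case, the two elements $\rho^{\pm}(\chi)$ match $\pi^{\pm}(\chi)$ via the intertwining operator identifying $P(E,K,\chi)$ with $P(E,K,\chi^{-1})$.
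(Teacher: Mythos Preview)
Your three-step outline matches the paper's strategy, but the execution of the multiplicity-one step has a real gap. You propose to bound the multiplicity of a chosen $K_E$-type $\tau$ in $\Theta(\rho(\chi))$ by ``a direct branching computation inside a single carefully chosen $K_H$-type of $V$.'' This does not work in the three cases where $T_{E,K}^{\circ}$ is non-compact. There the big theta is a genuine quotient of $V$ by the action of $U(\mathfrak a)$ (the enveloping algebra of the split part of $T_{E,K}^{\circ}$), and a fixed $K_E$-type $\tau$ appears in \emph{infinitely many} $K_H$-types $V_{n\omega_4}$ (resp.\ $V_{n+2}\otimes V_{n\omega_3}$), with multiplicity growing linearly in $n$ (see Proposition~\ref{P:branching}). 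No single $K_H$-type controls the multiplicity in the quotient. The paper's key input here is Lemma~\ref{L:key}: $V$ is torsion-free as a $U(\mathfrak a)$-module, proved via exponential decay of matrix coefficients and Fourier transform. This forces $\Hom_K(\tau,V)$ to be a free $U(\mathfrak a)$-module of rank one when the ``stable'' multiplicity is one (Proposition~\ref{P:free}, Lemma~\ref{L:linear}), and only then does the multiplicity in $\Theta(\chi)=V\otimes_{U(\mathfrak a)}\mathbb C_\nu$ drop to one. Your proposal is missing this torsion-freeness argument entirely.

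A second, smaller gap: you never say how $L(\pi(\chi),1)$ is shown to occur as a subquotient of $\Theta(\rho(\chi))$ in the first place. The paper obtains this from the see-saw with the Levi dual pair $M_E\times T_{E,K}\subset M_J$ (Remark after Theorem~\ref{T:main_2}): the minimal representation of $M_J$ is a quotient of $V$, and its restriction to $M_E\times T_{E,K}$ is a triple product of classical $\SL_2\times\O_2$ correspondences, giving $\pi(\chi)\leftrightarrow\rho(\chi)$ directly. Without this, your ``lower bound is automatic'' claim has no foundation. Finally, your reduction of the generic unitary $\chi$ to the real-infinitesimal-character case is not coherent continuation but the elementary partition-of-$K$-types statement of Proposition~\ref{P:types} (via Vogan's Lemma~\ref{L:vogan}); and note that $\epsilon$ is not a value of $\chi$ but the sign extension of $\chi=1$ to $T_{E,K}$, handled separately by tracking the action of $T_{E,K}/T_{E,K}^{\circ}$ on the distinguished $K$-types (Corollary~\ref{C:m_two}, Lemmas~\ref{L:m_two} and~\ref{L:m_three}).
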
 

The representation $\Theta(\epsilon)$ is always irreducible, and can be described as it sits in certain degenerate principal series representations, along with 
$\Theta(\rho(1))$. 
Let $I_E(s)$ denote the (normalized) degenerate principal series for $G_E$ where we induce $|\det|^s$ from $P_E$. Let $I^{\omega}_E(s)$ be the quadratic twist 
of this series, i.e. we induce $\omega(\det) \cdot |\det|^s$. (Recall that $\omega$ is the sign character of $\mathbb R^{\times}$.) 
The following result is due to Avner Segal  \cite[Appendix A]{Se} but formulated with our interpretation in terms of theta lifts.

\begin{thm} \label{T:main_2} 
 Let $\Theta_{E\rightarrow J}(\rho)$ denote the theta lift of $\rho$ in the correspondence arising from the embedding $E\rightarrow J$. 

\begin{itemize} 
\item For every $E$, we have an exact sequence 
\[ 
0 \rightarrow \oplus \Theta_{E\rightarrow H_3(\mathbb C)} (\epsilon) \rightarrow I_E(1/2) \rightarrow \Theta_{E\rightarrow M_3(\mathbb R)} (\rho(1)) \rightarrow 0 .
\] 
\item For every $E$, we have an exact sequence 
\[ 
0 \rightarrow  \Theta_{E\rightarrow M_3(\mathbb R)} (\epsilon) \rightarrow I^{\omega}_E(1/2) \rightarrow \oplus \Theta_{E\rightarrow H_3(\mathbb C)} (\rho(1)) \rightarrow 0 .
\] 
\end{itemize} 
Here $H_3(\mathbb C)$ is any Jordan algebra arising from an involution of the second kind on $M_3(\mathbb C)$.  The sum, in both sequences, is over the isomorphism 
classes of embeddings of $E$ into such algebras. There is one class if $E=\mathbb R\times \mathbb C$, and four if $E=\mathbb R^3$. 
 
  \end{thm}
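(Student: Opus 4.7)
The plan is to deduce both exact sequences by combining Segal's structural analysis of the degenerate principal series $I_E(1/2)$ and $I_E^{\omega}(1/2)$ at their reducibility point with the theta-correspondence identification of their composition factors supplied by Theorem~\ref{T:main}. Segal \cite{Se} already establishes, purely on the $G_E$ side, that each of $I_E(1/2)$ and $I_E^{\omega}(1/2)$ has length two, with a unique irreducible submodule and a unique irreducible quotient whose Langlands parameters he identifies explicitly. The remaining work is therefore to recognize his submodules and quotients as the specific theta lifts appearing in the statement.

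To produce the requisite $G_E$-equivariant maps, I would exploit the see-saw pair $G_E \times T_{E,K}$ and $G_2 \times \Aut(J)$ inside $H_J$. For each embedding $E \to J$ the $\rho$-isotypic component of $V$ under $T_{E,K}$ is by definition $\Theta_{E \to J}(\rho)$, and a matrix-coefficient argument (reducing the question to the Jacquet module of $V$ along the parabolic $P_E$ of $G_E$, whose structure is standard for the minimal representation) embeds this component into a degenerate principal series of $G_E$. For $J=H_3(\mathbb{C})$ and $\rho=\epsilon$ the target is $I_E(1/2)$, while for $J=M_3(\mathbb{R})$ and $\rho=\rho(1)$ one obtains the quotient of $I_E(1/2)$; the roles swap when passing to $I_E^{\omega}(1/2)$, with the $\omega\circ\det$ twist reflecting the action of the nontrivial coset of $T_{E,K}/T_{E,K}^{\circ}$ on the relevant matrix coefficient.

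To assemble the sequences I would then take the direct sum over the isomorphism classes of embeddings $E \to H_3(\mathbb{C})$ (four when $E=\mathbb{R}^3$ and one when $E=\mathbb{R}\times\mathbb{C}$). The output is a candidate submodule of $I_E(1/2)$, whose Langlands data, by Theorem~\ref{T:main}, matches precisely what Segal identifies as the submodule. A length count together with Segal's description of the quotient then forces the exact sequence on the nose, and the analogous argument yields the $\omega$-twisted version.

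The main obstacle is not a conceptual one but the bookkeeping of matching the four distinct embeddings $\mathbb{R}^3 \to H_3(\mathbb{C})$ with four composition factors on the spectral side, and similarly disentangling the $\rho(1)$ versus $\epsilon$ roles in the two sequences. This amounts to comparing infinitesimal characters and minimal $K$-types across the see-saw, which is precisely the information encoded in Theorem~\ref{T:main} and in the \texttt{atlas} computations that underpin it.
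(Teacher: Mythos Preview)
The paper does not actually prove this theorem: immediately before the statement it says the result is ``due to Avner Segal \cite[Appendix A]{Se} but formulated with our interpretation in terms of theta lifts.'' In other words, Segal has already written down the exact sequences with the constituents described by their Langlands data (or minimal $K$-types), and the only thing the paper contributes is the relabeling of those constituents as the theta lifts $\Theta_{E\to J}(\rho(1))$ and $\Theta_{E\to J}(\epsilon)$, which is exactly what the case-by-case theorems of Sections~5 and~6 accomplish. So your overall plan---cite Segal for the structure, then match constituents to theta lifts---is the same as the paper's, and there is no need for the see-saw/matrix-coefficient construction of explicit maps that you sketch.

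That said, two points in your proposal need correction. First, your assertion that $I_E(1/2)$ and $I_E^{\omega}(1/2)$ have length two is wrong when $E=\mathbb{R}^3$: the direct sum in the statement runs over the \emph{four} isomorphism classes of embeddings $\mathbb{R}^3\to H_3(\mathbb{C})$, and the four summands are genuinely distinct irreducibles (with minimal $K$-types $V(4,0,0,0)$, $V(0,4,0,0)$, $V(0,0,4,0)$, $V(0,0,0,4)$ in the split case). So the length is five, not two, and there is no ``unique irreducible submodule.'' You seem aware of this in your last paragraph, but the earlier sentence should be fixed. Second, Theorem~\ref{T:main} as stated only identifies $\Theta(\rho(\chi))$; it says nothing about $\Theta(\epsilon)$. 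The identification of $\Theta(\epsilon)$ (as $\Sigma$, or as a specific quaternionic representation, depending on the case) is carried out in the individual theorems of Sections~5 and~6, so you must invoke those, not Theorem~\ref{T:main} alone, to complete the matching.
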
 

\noindent
{\bf Remark:} $M_E \times T_{E,K}$ is a dual pair in a Levi subgroup $M_J$ of $H_J$ of absolute type $A_5$. The correspondence obtained by 
restricting the minimal representation of $M_J$ to 
this dual pair is essentially a triple tensor product of the classical theta correspondence for the dual pair $\SL_2 \times \O_2$, 
and it works out to be $\pi(\chi) \leftrightarrow \rho(\chi)$ \cite[Section 10]{GS22}. Since the minimal representation of $M_J$ is a quotient of the 
minimal representation of $H_J$, it follows at once that $\Theta(\rho(\chi))$ contains $L(\pi(\chi), 1)$ as a subquotient. This gives a lower bound on $\Theta(\rho(\chi))$. 
The upper bound is achieved by working out the types of $\Theta(\rho(\chi))$. This strategy, however, fails for $\rho=\epsilon$ as it does lift to a representation of 
$M_E$.

\section{Correspondence of infinitesimal characters} 
We shall describe a compact form of the $D_4$-dual pair in the real group $E_{6,2}$, that 
will be used to determine the correspondence of infinitesimal characters. In this section Jordan algebras play no role, and notation will be somewhat different.

 Let $\mathfrak h$ be the split Lie algebra of type $E_6$,  and 
we fix a Chevalley basis of $\mathfrak h$, over a general field $F$.  
We realize the root system $E_6$ siting in $\mathbb R^8$, in the standard fashion, where the simple roots are  
 \[ 
 \alpha_1=\frac12(e_1+e_8)-\frac12(e_2+e_3+e_4+e_5+e_6+e_7) 
 \] 
 \[ 
 \alpha_2=e_1+e_2, ~\alpha_3=e_2-e_1, ~\alpha_4=e_3-e_2, ~\alpha_5=e_4-e_3, ~\alpha_6=e_5-e_4. 
 \] 
 We shall realize the co-character lattice in the same space, so that the natural pairing between co-characters and roots is given by 
 the standard dot product on $\mathbb R^8$ denoted by $\langle \cdot, \cdot \rangle$. 
Let $\alpha_0$ be the lowest root. The lowest root and the simple roots give rise to an extended Dynkin diagram that has a symmetry of order 3. 
Let $\lambda_1$ be a miniscule fundamental co-character such that $\langle \lambda_1, \alpha_1 \rangle=1$, 
$\langle \lambda_1, \alpha_i \rangle=0$ for other simple roots, and $\langle \lambda_1, \alpha_0 \rangle=-1$, as pictured. 

\begin{picture}(200,120)(-120,-15)

\put(79,73){\line(0,-1){30}}
\put(79,40){\circle{6}}
\put(79,37){\line(0,-1){30}}
\put(79,04){\circle{6}}

\put(84,01){-1}

\put(04,82){1}

\put(07,76){\circle{6}}
\put(10,76){\line(1,0){30}}

\put(43,76){\circle{6}}
\put(46,76){\line(1,0){30}}
\put(79,76){\circle{6}}

\put(82,76){\line(1,0){30}}
\put(115,76){\circle{6}}

\put(118,76){\line(1,0){30}}
\put(151,76){\circle{6}}

\end{picture}

The group $S_3$ of automorphisms of the extended diagram lifts to the group of automorphisms of the root system and to $H$, the adjoint group of 
type $E_6$. 
 Let $\sigma$ be the automorphism of order 3 that corresponds to the counterclockwise rotation of the 
diagram by $120$ degrees, so to say.  Let $\lambda_2=\sigma(\lambda_1)$ and $\lambda_3=\sigma(\lambda_2)$. Then $-\lambda_2$ is the other 
miniscule fundamental weight and $\lambda_1 + \lambda_2 +\lambda_3=0$, in the additive notation. In the multiplicative notation $\lambda_i$ are 
homomorphisms from $\mathbb G_m$ into the adjoint Chevalley group and 
$\lambda_1(t)\lambda_2(t)\lambda_3(t)=1$ for all $t\in F^{\times}$. 
Thus $\lambda_i(t)$ generate a torus $T$ isomorphic to $(F^3)^{\times}/F^{\times}$. 
Centralizing $T$, there is a split, simply connected group $G$ of type $D_4$ corresponding to the roots perpendicular to the three $\lambda_i$. 
The group $S_3$ simultaneously acts on $T$ and $G$.  

\smallskip Assume now $F=\mathbb R$. Now $\tau=\lambda_1(-1)$ defines a Cartan involution of $\mathfrak g$ and the corresponding real form  $H'$ of $H$ is denoted 
by $E_{6,2}$ in the classification of real groups. 
 Since $\tau$ commutes with $T\times G$ we have a compact form $T'\times G'$ of the dual pair in $H'$. 
Each $\lambda_i$ defines a homomorphism from $\U(1)$ into $H'$ and these combine to give an isomorphism  $T'\cong \U(1)^3/\U(1)$. 
For every integer $n$ let $\chi_n(z)=z^n$ be the character of $\U(1)$. 
Any character of $T'$ is 
\[ 
\chi(a,b,c)=\chi_a\boxtimes \chi_b \boxtimes \chi_c
\] 
 for three integers such that $a+b+c=0$. 
Let $K'$ be the maximal compact subgroup of $H'$ picked by $\tau$. It is 
isomorphic to $\U(1) \times \mu_4 ~\Spin_{10}$ where the map from $\U(1)$ into $K'$ is given by $\lambda_1$, while $\Spin_{10}$ corresponds to 
simple roots $\alpha_i$ for all $i\neq 1$. 
These roots are linear combinations of $e_1, \ldots, e_5$ and are a part of the standard realization of $D_5$ root system.  
Then 
\[ 
\omega=\frac12 (e_1 + e_2 + e_3 + e_4 + e_5)
\] 
 is the highest weight of a half-spin representation $V_{\omega}$ of $\Spin_{10}$.  The $K'$-types of the minimal representation $V$ of $H'$ are \cite{Ze} 
\[ 
V =\oplus_{n=0}^{\infty}~ \chi_{n+4} \boxtimes V_{n\omega}. 
\] 
The group $G'\cong \Spin_8$ corresponds to roots $\alpha_i$ for $i\neq 1,6$. These roots are linear combinations of $e_1, \ldots, e_4$. Hence 
centralizing $\Spin_8$ in $\Spin_{10}$ is $\U(1)$, the image of the co-character $2e_5$.  We have a restriction formula \cite{Li}, from $\Spin_{10}$ to $\Spin_8\times \U(1)$
\[ 
V_{n\omega}= \bigoplus  V_{(\frac{n}{2}, \frac{n}{2}, \frac{n}{2}, \frac{b}{2})}  \boxtimes  \chi_{b} 
\] 
where the sum is over all integers $b$ such that $|b|\leq n$ and $n\equiv b \pmod{2}$ and $(\frac{n}{2}, \frac{n}{2}, \frac{n}{2}, \frac{b}{2})$ is the highest 
weight for $\Spin_8$ in the standard realization of the $D_4$ root system. 
Substituting this in the formula for $V$, and using $-2e_5=\lambda_1+2\lambda_2$,  the restriction of $V$ to $T'\times G'$ is a 
multiplicity free sum of the terms 
\[ 
\chi(n+4, -\frac{b+n}{2} -2, \frac{b-n}{2} -2) \boxtimes V_{(\frac{n}{2}, \frac{n}{2}, \frac{n}{2}, \frac{b}{2})}. 
\] 

\smallskip 
Let $\mathfrak h_{\mathbb C}=\mathfrak h\otimes \mathbb C$ etc. 
The annihilator of $V$ in the enveloping algebra $U(\mathfrak h_{\mathbb C})$ is the Joseph ideal $I$.  Let 
$Z(\mathfrak g_{\mathbb C})$ be the center of the enveloping algebra $U(\mathfrak g_{\mathbb C})$. 
The decomposition of $V$,  under the restriction to $T'\times G'$, shows that there is a map 
 $\varphi : Z(\mathfrak g_{\mathbb C}) \rightarrow U(\mathfrak t_{\mathbb C})$
 such that  $z\equiv \varphi(z)\pmod I$.    More precisely, we have the following lemma: 
 
 \begin{lemma} \label{L:infinitesimal}  
  Let $(a,b,c)$ be an infinitesimal character of $\mathfrak t_{\mathbb C}$,  that is,  a triple of complex numbers such that $a+b+c=0$. Then 
 the pullback of $(a,b,c)$ by $\varphi$ is an infinitesimal character of $\mathfrak g_{\mathbb C}$ represented by 
 \[ 
\frac{1}{2}(a+2,-a+2,b+c,b-c). 
\] 
in the standard realization of the $D_4$ root system. 
\end{lemma}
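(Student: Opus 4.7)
The plan is to exploit the multiplicity-free decomposition of $V|_{T'\times G'}$ displayed just above, together with the fact that the Joseph ideal $I$ annihilates $V$. Since each $(T'\times G')$-isotypic summand $\chi(a,b,c)\boxtimes V_{(n/2,n/2,n/2,k/2)}$ occurs with multiplicity one, any central element $z\in Z(\mathfrak g_{\mathbb C})$ acts on such a summand by a single scalar; by the Harish-Chandra isomorphism, this scalar is the value at $z$ of the infinitesimal character of $V_{(n/2,n/2,n/2,k/2)}$, represented in the standard realization by $\big((n+6)/2,\,(n+4)/2,\,(n+2)/2,\,k/2\big)$ (highest weight plus $\rho_{D_4}=(3,2,1,0)$). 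This scalar is polynomial in $(n,k)$, and the linear relations $n=a-4$ and $k=c-b$ read off from the $T'$-weight formula make it polynomial in $(a,b,c)$. The corresponding polynomial defines $\varphi(z)\in S(\mathfrak t_{\mathbb C})=U(\mathfrak t_{\mathbb C})$, and because $V$ is annihilated by $I$ while $z$ and $\varphi(z)$ act identically on $V$, we obtain $z\equiv\varphi(z)\pmod I$, as required.

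Once $\varphi$ is described in this form, the lemma reduces to a purely combinatorial identification. Substituting $n=a-4$ and $k=c-b$ into the representative above yields
\[
\big((a+2)/2,\; a/2,\; (a-2)/2,\; (c-b)/2\big),
\]
and, using $b+c=-a$, the representative proposed in the lemma is $\big((a+2)/2,\,(2-a)/2,\,-a/2,\,(b-c)/2\big)$. I would verify that the two representatives lie in the same $W(D_4)$-orbit by applying the signed permutation that swaps the second and third coordinates and simultaneously flips both their signs (an even number of sign changes, hence an element of $W(D_4)$), combined with the outer involution arising from the triality $S_3$ that normalizes the dual pair inside $E_6$ and realizes the exchange of the two half-spin representations of $\Spin_8$. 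Finally, since the polynomial identity $z\equiv\varphi(z)\pmod I$ has been established on the Zariski-dense family of integer triples $(n,k)$ with $|k|\leq n$ and $n\equiv k\pmod 2$, it extends to all complex $(a,b,c)$ satisfying $a+b+c=0$.

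The main obstacle is the Weyl-group comparison in the second step. The internal $D_4\subset E_6$ carries its positive system inherited from the chosen Chevalley basis, which differs from the standard $D_4$-basis by a permutation together with an outer involution that exchanges the two half-spin representations of $\Spin_8$. Tracking this identification carefully---so that the last coordinate of the $\Spin_8$ weight in the branching corresponds correctly to the last coordinate in the lemma's representative---is the part of the argument that needs the most attention; the remaining steps are formal once this identification is made.
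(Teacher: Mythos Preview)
Your approach is essentially the paper's own: use the multiplicity-free decomposition of $V$ under $T'\times G'$, read off the $\Spin_8$ highest weight, add $\rho_{D_4}=(3,2,1,0)$, and substitute $n=a-4$, $k=c-b$. The paper compresses all of this into one sentence (``easy check''), so on the level of method there is nothing to add.

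Where your write-up goes astray is in the reconciliation step. You correctly compute the representative $\tfrac12(a+2,\,a,\,a-2,\,c-b)$ and then try to match it to the lemma's $\tfrac12(a+2,\,2-a,\,-a,\,b-c)$ by invoking an outer involution coming from triality. That is not a legitimate move: an infinitesimal character is a $W(D_4)$-orbit, and two weights differing by an outer automorphism not in $W(D_4)$ represent \emph{different} infinitesimal characters. If you actually needed triality here, the lemma would be false.

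In fact no outer automorphism is needed. The discrepancy you found is a sign slip in the lemma's displayed formula: the last coordinate should be $(c-b)/2$ rather than $(b-c)/2$. You can see this independently from the paper's own ``more symmetric'' reformulation immediately following the lemma, namely $\beta+\tfrac{a}{2}\alpha_1+\tfrac{b}{2}\alpha_2+\tfrac{c}{2}\alpha_3$; expanding in the coordinates of Section~2.1 gives $\tfrac12(a+2,\,2-a,\,b+c,\,c-b)$. And this \emph{is} $W(D_4)$-conjugate to your computed $\tfrac12(a+2,\,a,\,a-2,\,c-b)$: flip the signs of the second and third entries (an even sign change) and then transpose them. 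So drop the appeal to triality, note the typo, and the verification is complete with an honest Weyl-group element.
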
 
\begin{proof} This is an easy check using the decomposition of $V$ and that the infinitesimal character of $V_{(\frac{n}{2}, \frac{n}{2}, \frac{n}{2}, \frac{b}{2})}$  is
$(\frac{n}{2}+3, \frac{n}{2}+2, \frac{n}{2}+1, \frac{b}{2})$ 
\end{proof}

Of course, Lemma \ref{L:infinitesimal} gives a correspondence of infinitesimal characters for any real form of the dual pair, since the annihilator of the minimal 
representation is always the Joseph ideal \cite{GS04}, and the dual pair is unique up to conjugation over complex numbers. 
It is more symmetric to write the correspondence in the above lemma as 
\[
 \beta + \frac{a}{2} \alpha_1 + \frac{b}{2} \alpha_2 + \frac{c}{2} \alpha_3 \longleftrightarrow
  (a,b,c)
\]
where $a+b+c =0$, $\beta$ the highest root, and $\alpha_i$ are three simple roots permuted by $S_3$. 

\section{The correspondences in the case $J=M_3(\mathbb R)$}

\subsection{Minimal representation} 
Let $K_J$ be a maximal compact subgroup of $H_J$. Recall that in the case $J=M_3(\mathbb R)$ the group $H_J$ is split. Hence 
$K_J$ is isomorphic to $\Sp(4)/\mu_2 \times \mathbb Z/2\mathbb Z$.  The non-trivial element 
of $\mathbb Z/2\mathbb Z$ is the Cartan involution,  giving a decomposition 
\[ 
\mathfrak h_J =\mathfrak k_J \oplus \mathfrak p_J
\] 
where $\mathfrak p_J\otimes \mathbb C  \cong V_{\omega_4}$, the fourth fundamental representation of $\Sp(4)$. 
Let $V$ be the $(\mathfrak h_J, K_J)$-module corresponding to the minimal representation of $H_J$ \cite{BK}. We have an isomorphism of $\Sp(4)$-modules,
\[ 
V= \oplus _{n=0}^{\infty} V_{n\omega_4}, 
\] 
where $V_{n\omega_4}$ is the irreducible representation of $\Sp(4)$ of the highest weight $n\omega_4$. 
The nontrivial element in $\mathbb Z/2\mathbb Z\subset K_J$, i.e. the Cartan involution, acts on $V_{n\omega_4}$ by $(-1)^n$.  

\subsection{Some branching rules} In this paper the highest weight $\mu$ of 
a finite-dimensional representation of $\Sp(2)$ is given by a pair of integers $\mu=(x,y)$ such that $x\geq y \geq 0$.  In this setup $(1,0)$ and $(1,1)$ are fundamental 
highest weights corresponding to 4 and 5 dimensional representations, respectively. 
Let $V_{\mu}$ be the irreducible representation of 
$\Sp(2)$ with the highest weight $\mu$. We shall need several branching rules for small rank groups. 
They are either special cases of branching rules in the literature, see \cite{HTW} 
and \cite{GoW}, or can be directly derived from the Weyl character formula. 

\begin{lemma}\label{L:branching_1} For the natural inclusion of $\Sp(2) \times \Sp(2)$ into $\Sp(4)$, 
 the restriction of  $V_{n\omega_4}$ to $\Sp(2) \times \Sp(2)$ is isomorphic to a multiplicity free sum of 
$V_{\mu} \boxtimes V_{\mu}$ over all highest weights $\mu=(x,y)$ such that $n\geq x$. 
\end{lemma}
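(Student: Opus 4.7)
My plan is to verify the branching formula by direct character computation, or equivalently as a special case of the stable branching rules of \cite{HTW} (see also \cite{GoW}).

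I would restrict both sides to a maximal torus $T = T_1 \times T_2$ of $\Sp(2)\times\Sp(2)$, embedded in the standard maximal torus of $\Sp(4)$ with coordinates $(s_1, s_2, t_1, t_2)$, and check equality of characters. The Weyl character formula for $\Sp(4)$ gives the character of $V_{n\omega_4}$ restricted to $T$ as a ratio of alternating determinants in $s_i^{\pm 1}, t_j^{\pm 1}$. The right-hand side is the truncated symplectic Cauchy-type sum
\[
\Sigma_n(s,t) = \sum_{n \geq x \geq y \geq 0} sp_{(x,y)}(s)\, sp_{(x,y)}(t),
\]
which one can evaluate in closed form using the Jacobi--Trudi formula for symplectic Schur functions together with geometric-series manipulations that collapse the double sum over the bounded cone $n \geq x \geq y \geq 0$. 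Matching the resulting determinantal expressions requires factoring the $C_4$-Weyl denominator on $T$ as the product of the two $C_2$-Weyl denominators times a ``mixed'' Vandermonde-like factor corresponding to the roots $\pm e_i \pm e_j$ that cross the two $\Sp(2)$ blocks.

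A more elementary alternative is induction on $n$. The base case $n=0$ is trivial, while $n=1$ follows from the $\Sp(4)$-decomposition $\Lambda^4 \mathbb C^8 = V_{\omega_4} \oplus V_{\omega_2} \oplus V_{\omega_0}$ combined with the explicit $\Sp(2)\times\Sp(2)$-decomposition of $\Lambda^k(\mathbb C^4 \oplus \mathbb C^4)$ for $k=4,2,0$; after subtracting off $V_{\omega_2}$ and $V_{\omega_0}$, one obtains $V_{\omega_4}|_{\Sp(2)\times\Sp(2)} = \mathbf{1}\boxtimes\mathbf{1} \oplus V_{(1,0)}\boxtimes V_{(1,0)} \oplus V_{(1,1)}\boxtimes V_{(1,1)}$. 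The inductive step combines the Pieri-type decomposition of $V_{\omega_4} \otimes V_{n\omega_4}$ for $\Sp(4)$ with the tensor product rules for $\Sp(2)$ to verify that the cone of appearing highest weights expands by exactly one outer layer.

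The main obstacle in either route is the combinatorial bookkeeping: in the character-theoretic version, carrying out the determinantal manipulation and verifying the closed form; in the inductive version, checking that all cross-terms cancel correctly to leave only the diagonal summands $V_\mu \boxtimes V_\mu$, each with multiplicity one. In both versions, the key structural fact that makes the branching diagonal is that the pair $(\Sp(4), \Sp(2)\times\Sp(2))$ is a symmetric pair with a Weyl group element of the restricted root system swapping the two factors, so the appearing summands must be invariant under the swap of the two $\Sp(2)$'s; the multiplicity-one assertion is then forced by a dimension count.
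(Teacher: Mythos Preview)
The paper does not actually prove this lemma: it is stated without proof, with the surrounding text noting that the needed branching rules ``are either special cases of branching rules in the literature, see \cite{HTW} and \cite{GoW}, or can be directly derived from the Weyl character formula.'' Your first approach---viewing the identity as a special case of the stable branching rules in \cite{HTW}, or alternatively as a character identity verifiable via the Weyl character formula---is exactly what the paper has in mind. Your inductive alternative would also succeed and is more self-contained, at the cost of the bookkeeping you mention; the $n=1$ dimension check ($1+16+25=42=\dim V_{\omega_4}$) is correct.

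One caution about your closing heuristic: the involution swapping the two $\Sp(2)$ factors only tells you that the multiplicity of $V_\mu\boxtimes V_\nu$ equals that of $V_\nu\boxtimes V_\mu$; it does \emph{not} force each appearing summand to satisfy $\mu=\nu$, so diagonality is not a formal consequence of the symmetric-pair structure alone. The diagonal form genuinely requires the explicit computation (or the general machinery of \cite{HTW}, where this is the symplectic analogue of the $\GL_n\times\GL_n\subset\GL_{2n}$ Cauchy identity). Likewise, ``multiplicity one forced by a dimension count'' is too quick as stated: a dimension count shows the proposed decomposition has the right total dimension, but that does not by itself rule out, say, one diagonal term appearing with multiplicity two and another being absent. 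You need the character identity (or the explicit branching) to pin down each multiplicity.
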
 

Let $V_n$ be the irreducible $n+1$-dimensional representation of $\SU_2\cong \Sp(1)$. 

\begin{lemma}\label{L:branching_2} Let $\mu=(x,y)$ be a highest weight for $\Sp(2)$. 
The restriction of $V_{\mu}$ to $\SU_2 \times \SU_2$ is a multiplicity free sum of $V_a \boxtimes V_b$ over all $(a,b)$ such that $a+b\equiv x+y \pmod{2}$ 
\[ 
|a-b| \leq x-y \text{ and } x-y\leq a+b \leq x+y. 
\] 
\end{lemma}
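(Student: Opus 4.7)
The plan is to identify the embedding $\SU_2 \times \SU_2 \hookrightarrow \Sp(2)$ with the standard inclusion $\Spin_4 \hookrightarrow \Spin_5$ via the accidental isomorphisms $\SU_2 \times \SU_2 \cong \Spin_4$ and $\Sp(2) \cong \Spin_5$, and then invoke the classical orthogonal branching rule.

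First I would translate the $\Sp(2)$ highest weight $\mu = (x, y)$ into the standard $B_2$ coordinates for $\Spin_5$. Under the isomorphism $C_2 \cong B_2$, the $C_2$ fundamental weight $\omega_1$ (giving the $4$-dimensional representation) corresponds to the $B_2$ spin weight $(\tfrac{1}{2}, \tfrac{1}{2})$, while $\omega_2$ (giving the $5$-dimensional representation) corresponds to the $B_2$ vector weight $(1, 0)$. Writing $(x,y) = (x-y)\omega_1 + y\omega_2$, the corresponding $\Spin_5$ highest weight is $\lambda = \bigl(\tfrac{x+y}{2}, \tfrac{x-y}{2}\bigr)$.

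Next I would apply the classical multiplicity-free branching rule from $\Spin_5$ to $\Spin_4$ (a codimension-one orthogonal inclusion; see \cite{GoW}): $V_\lambda$ restricts to the direct sum, each with multiplicity one, of the $\Spin_4$ irreducibles with highest weight $(\mu_1, \mu_2)$ subject to the interlacing
\[
\tfrac{x+y}{2} \ge \mu_1 \ge \tfrac{x-y}{2} \ge |\mu_2|,
\]
together with the lattice compatibility requiring $(\mu_1, \mu_2)$ to lie in the same $\Z^2$- or $(\Z+\tfrac{1}{2})^2$-coset as $\lambda$. Under $\Spin_4 \cong \SU_2 \times \SU_2$, the two simple coroots are $e_1 - e_2$ and $e_1 + e_2$, so the highest weight $(\mu_1, \mu_2)$ corresponds to the pair $(a, b) = (\mu_1 - \mu_2, \mu_1 + \mu_2)$; equivalently $\mu_1 = \tfrac{a+b}{2}$ and $\mu_2 = \tfrac{b-a}{2}$. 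Substituting, the interlacing inequalities become precisely $|a-b| \le x-y \le a+b \le x+y$, while the lattice constraint translates to $a + b \equiv x + y \pmod{2}$, since $a + b = 2\mu_1$ is even exactly when $\lambda$ is integral, that is, when $x + y$ is even.

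The only real obstacle is keeping the two sets of conventions ($C_2 \leftrightarrow B_2$ and $\Spin_4 \leftrightarrow \SU_2 \times \SU_2$) consistent; once this bookkeeping is in order the conclusion is the standard $\Spin_5 \to \Spin_4$ branching rule read in different coordinates, and the multiplicity-freeness is automatic for this codimension-one orthogonal pair. An alternative route, indicated in the text, is to bypass the accidental isomorphisms by expanding the Weyl character of $V_\mu$ as an alternating sum over the $C_2$ Weyl group and restricting to the Cartan of $\SU_2 \times \SU_2$, then regrouping the resulting exponentials into $\SU_2 \times \SU_2$ characters; this yields the same answer more computationally.
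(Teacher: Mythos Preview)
Your proposal is correct. The paper does not actually supply a proof of this lemma: it simply states that the branching rules are ``either special cases of branching rules in the literature, see \cite{HTW} and \cite{GoW}, or can be directly derived from the Weyl character formula,'' and your argument is precisely a careful unpacking of the first of these options (the $\Spin_5 \to \Spin_4$ rule from \cite{GoW}) via the accidental isomorphisms, with the Weyl character route you mention at the end being the second option.
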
 

Finally, we need a branching rule from $\Sp(2)$ to $\Sp(1) \times \SO_2$, where $\Sp(1) \times \O_2 \subset \Sp(2)$ is a classical dual pair. We
 work in the equivalent setting of restriction from $\SO(5)$ to $\SO(3) \times \SO(2)$.  In this setup the highest weights of irreducible representations of $\SO(5)$ are 
 pairs $a\geq b \geq 0$ of half integers such that $a\equiv b \pmod{\mathbb Z}$. The corresponding $\Sp(2)$-highest weights are $(x,y)=(a+b,a-b)$. 
  The highest weights of irreducible representations of $\SO(3)$ are non-negative half integers $c$ and 
 representations of $\SO(2)$ are characters $\chi_n$ where $n$ is a half integer. 
  For every non-negative half integer $n$ let 
  \[ 
  A(n) = \chi_{n} + \chi_{n-1} + \cdots + \chi_{-n}. 
  \] 
  For every non-negative integer $n$ let 
   \[ 
  B(n) = \chi_{n} + \chi_{n-2} + \cdots + \chi_{-n}. 
  \] 
  These are $2n+1$ and $n+1$-dimensional representations of $\SO(2)$, respectively. We have the following branching result: 
  
  \begin{lemma} \label{L:branching_3} Let $\mu=(a,b)$ be a highest weight for $\SO(5)$. The restriction of $V_{\mu}$ to
  $\SO(3) \times \SO(2)$  is 
  \[ 
  V_{\mu} = \oplus_{c} V_c\boxtimes \chi[c] 
  \] 
  where the sum is over all irreducible representations $V_c$ of $\SO(3)$. Here $\chi[c]$ is zero unless $a\geq c$ and $a \equiv c  \pmod{\mathbb Z}$, and then 
  \begin{itemize} 
  \item $\chi[c]= A(b) B(a-c ) $ if $a\geq c\geq b$ and 
  \item 
  $\chi[c]= A(c) B(a-b)$ if $a\geq b\geq c$. 
  \end{itemize} 
  
  \end{lemma}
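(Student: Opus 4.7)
The subgroup $\SO(3)\times\SO(2)$ and the ambient $\SO(5)$ share a common rank-$2$ maximal torus, with coordinates $x$ on the $\SO(3)$ Cartan and $y$ on the $\SO(2)$ Cartan. Hence the branching problem is a bookkeeping question at the level of characters: one must show that the Weyl character $\chi^{\SO(5)}_{(a,b)}(x,y)$ admits the unique expansion $\sum_c \chi^{\SO(3)}_c(x)\,\chi[c](y)$ with $\chi[c](y)$ as claimed. The vanishing conditions ($c \equiv a \pmod{\Z}$ and $0 \leq c \leq a$) fall out of the support and parity of the $\SO(5)$-character as a Laurent polynomial in $(x,y)$.

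I would derive the formula from the Weyl character formula. Writing $[k]_t := t^k - t^{-k}$, one checks the identity $[1/2]_{xy}[1/2]_{x/y} = (x+x^{-1}) - (y+y^{-1})$, which factors the $\SO(5)$ Weyl denominator as $[1/2]_x[1/2]_y\bigl((x+x^{-1}) - (y+y^{-1})\bigr)$. Multiplying the branching identity through by $[1/2]_x$ and using $[1/2]_x\,\chi^{\SO(3)}_c(x) = [c+1/2]_x$ reduces the lemma to the equality of Laurent polynomials
\[
\frac{[a+3/2]_x [b+1/2]_y - [b+1/2]_x [a+3/2]_y}{[1/2]_y\bigl((x+x^{-1}) - (y+y^{-1})\bigr)} \;=\; \sum_{c \geq 0} [c+1/2]_x\,\chi[c](y).
\]
The coefficients $\chi[c](y)$ are then extracted by partial fractions: using the expansion $\bigl((x+x^{-1})-(y+y^{-1})\bigr)^{-1} = \tfrac{1}{y-y^{-1}}\bigl(\tfrac{y}{x-y} - \tfrac{y^{-1}}{x-y^{-1}}\bigr)$ and expanding $(x-y)^{-1}$, $(x-y^{-1})^{-1}$ as formal Laurent series in $x$, one reads off the coefficient of $x^{c+1/2}$ and simplifies. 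The resulting telescoping sum produces $A(b)\,B(a-c)$ when $b \leq c \leq a$ and $A(c)\,B(a-b)$ when $0 \leq c \leq b$, the two expressions agreeing at $c = b$ as required for consistency.

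The main obstacle is the combinatorial manipulation of the Laurent expansion and the correct verification of the case split $c \geq b$ versus $c \leq b$. A useful sanity check at each step is the dimension count: summing $(2c+1)\dim\chi[c]$ against the proposed formulas must reproduce the Weyl dimension of $V_{(a,b)}$, which already pins down much of the bookkeeping. As alternative routes, one may invoke the general $\SO(2n+1) \downarrow \SO(2n-1)\times\SO(2)$ branching rule from \cite{HTW} specialized to $n = 2$, or argue inductively on $a+b$ using the Pieri-type tensor with $V_{(1,0)}$, whose restriction is the transparent $V_1 \boxtimes \chi_0 \oplus V_0 \boxtimes (\chi_1 + \chi_{-1})$; the inductive step then reduces to a finite combinatorial identity satisfied by the functions $A(b)B(a-c)$ and $A(c)B(a-b)$.
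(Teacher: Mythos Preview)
The paper does not actually supply a proof of this lemma: it is one of three branching rules introduced with the sentence ``They are either special cases of branching rules in the literature, see \cite{HTW} and \cite{GoW}, or can be directly derived from the Weyl character formula.'' Your proposal follows exactly the second of these suggested routes---a direct Weyl character computation, with the $\SO(5)$ denominator correctly factored as $[1/2]_x[1/2]_y\bigl((x+x^{-1})-(y+y^{-1})\bigr)$---and your alternative of specializing the general branching rule in \cite{HTW} is precisely the first. So your approach is not merely consistent with the paper's, it \emph{is} the paper's (implicit) approach, only carried out in detail where the paper leaves it to the reader.
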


\subsection{Split case}

Assume that $E=\mathbb R^3$. In this case $T^{\circ}_{E,K}\cong  (\mathbb R^{3})^{\times} /\mathbb R^{\times}$ and a character 
$\chi$ is a triple $(\chi_1,\chi_2, \chi_3)$ of characters of $\mathbb R^{\times}$ such that $\chi_1 \cdot \chi_2 \cdot \chi_3=1$. Let 
$\nu_i\in \mathbb C$ be the differential of $\chi_i$. Then $(\nu_1,\nu_2,\nu_3)$, $\nu_1 + \nu_2 + \nu_3=0$, 
is the infinitesimal character of $\chi$ and the lift $\Theta(\chi)$ has the infinitesimal character 
\[
 \beta + \frac{\nu_1}{2} \alpha_1 + \frac{\nu_2}{2} \alpha_2 + \frac{\nu_3}{2} \alpha_3. 
\]
 The character $\chi$ is unitary if and only if all three $\nu_i$ are purely imaginary,

\smallskip 
 The group $G=G_E$ is split. 
Fix a maximal compact subgroup $K$ of $G$. It is isomorphic  $\SU_2^4 /\mu_2$. Fix such an isomorphism. Then
any irreducible representation of $K$ is isomorphic to  $V(a,b,c,d)=V_a\boxtimes V_b \boxtimes V_c \boxtimes V_d$, such that $a+b+c+d$ is even,  where $V_n$ denotes  the irreducible 
representation of $\SU_2$ of dimension $n+1$.   The center  $Z_K$ of $K$ is $\mu_2^4/\mu_2$. The Lie algebra $\mathfrak g$  of $G$ decomposes under the action of $K$ as 
\[ 
\mathfrak g =\mathfrak k \oplus \mathfrak p
\] 
where $\mathfrak p\otimes \mathbb C \cong V(1,1,1,1)$. The center $Z_K$ acts on $\mathfrak p$ by a character $\chi_K$. It is evident that the center $Z_G$ of $G$ is the kernel of $\chi_K$.

Let $K_J \supseteq K$ be a maximal compact subgroup of $H_J$, and fix an isomorphism 
 $K_J\cong \Sp(4)/\mu_2 \times \mathbb Z/2\mathbb Z$.  Up to $\Sp(4)$-conjugation, there is only one way to embed
$\SU_2^4$ into $\Sp(4)$. 
Under the action of $K\cong \SU_2^4/\mu_2$ 
 \[ 
 \mathfrak p_J\otimes \mathbb C \cong V_{\omega_4} = V(1,1,1,1) \oplus 2 V(0,0,0,0) \oplus_{(a,b,c,d)\in S} V(a,b,c,d) 
 \] 
 where $S$ contains 6 elements, all permutations of $(1,1,0,0)$. The center $Z_G$ acts trivially only on the first two summands. It now easily follows that the centralizer of 
 $Z_G$ in $\mathfrak h_J$ is $\mathfrak g \oplus \mathfrak a$ where $\mathfrak a\otimes \mathbb C \cong 2V(0,0,0,0)$. Since $[\mathfrak a, \mathfrak a]\subseteq \mathfrak k_J$, and 
 commutes with $K$, it follows that $[\mathfrak a, \mathfrak a]=0$. Let 
 $A=\exp(\mathfrak a)\subseteq H_J$, where the exponential is taken in the sense of Lie groups. Let $K'$ be a subgroup of $K_J$, isomorphic to $Z_K$, via the isomorphism 
$x \mapsto (x, \chi_K(x)) \in \Sp(4)/\mu_2 \times \mathbb Z/2\mathbb Z$.  Note that $Z_K \cap K'=Z_G$ and, by design, $K'$ centralizes $\mathfrak g$.  The conjugation 
action of $K'$ on $A$ descends to an action of $K'/Z_G$ where the non-trivial element acts by the inverse on $A$. The group $K'A$ is  our $T_{E,K}$, and 
$Z_G A$ is $T^{\circ}_{E,K} \cong (\mathbb R^{3})^{\times} /\mathbb R^{\times}$. 
This completes a description of the dual pair $G \times T_{E,K}$ suitable for studying the restriction problem for real groups.

 \subsection{Representations of split $G$} 

From Lemma \ref{L:infinitesimal} representations of $G$ that appear as theta lifts have the infinitesimal character $\lambda =(s+1,s,s-1,u)$  in the usual Bourbaki notation for the 
root system of the type $D_4$. Observe that $s$ and $u$ are purely imaginary when lifting unitary representations of split $T_{E,K}$.

\vskip 10pt 
We need the following general result, where $G$ is any split group.  corresponding to a root system $\Phi$. 
 Let $P_0=M_0 A_0 N_0$ be a minimal parabolic. Let $\Phi$ be the root system of $G$. 
 Fix a complex valued linear functional $\lambda$ of $\mathfrak a_0$, the Lie algebra of $A_0$. Let $\Phi_{\lambda}$ be the set of roots $\alpha$ such that 
 $(\lambda, \alpha^{\vee}) \in \mathbb R$. Write $\lambda = \Re(\lambda) + i \Im(\lambda)$, the sum of real and imaginary parts. Observe that 
 $\Phi_{\lambda}$ is simply the set of roots such that $\alpha^{\vee}$ is perpendicular to $\Im(\lambda)$. Hence $\Phi_{\lambda}$ is the set of 
 roots in a Levi subgroup. Without loss of generality, we shall assume that $\Im(\lambda)$ is contained in the closure of the positive chamber determined by $P_0$. 
 Then $\Phi_{\lambda}$ is the root system of a Levi of a parabolic $P=MAN$ containing $P_0$.  In this setting we have the following, Lemma 4 in \cite{Vo}: 
 
 \begin{lemma}\label{L:vogan} 
 Fix a character $\delta$ of $M_0$. 
Irreducible subquotients  of $\Ind_{P_0}^G(\delta,e^{ \lambda})$ are induced from irreducible subquotients of $\Ind_{P_0}^P(\delta, e^{\lambda})$.
 \end{lemma}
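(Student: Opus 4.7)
The plan is to combine induction by stages with the fact that, under the hypothesis on $\Im\lambda$, the functor $\Ind_P^G$ preserves irreducibility on every constituent we care about.

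Since $P_0\subseteq P$, induction by stages gives
\[
\Ind_{P_0}^G(\delta, e^\lambda) \;\cong\; \Ind_P^G\bigl(\Ind_{P_0}^P(\delta, e^\lambda)\bigr),
\]
where $\Ind_{P_0}^P$ is interpreted as induction up to the Levi $M$ of $P$ from the minimal parabolic $P_0\cap M$ of $M$. Because parabolic induction is exact, any composition series of $\Ind_{P_0}^P(\delta, e^\lambda)$ yields, after applying $\Ind_P^G$, a filtration of $\Ind_{P_0}^G(\delta, e^\lambda)$ whose graded pieces are the representations $\Ind_P^G(\sigma)$, as $\sigma$ runs over the irreducible subquotients of $\Ind_{P_0}^P(\delta, e^\lambda)$. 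It therefore suffices to show that $\Ind_P^G(\sigma)$ is irreducible for each such $\sigma$.

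For this I would turn to the theory of standard intertwining operators. The central character of $\sigma$ on the split center $A$ of $M$ is $e^{\lambda|_{\mathfrak a}}$, whose imaginary part is $\Im\lambda\in\mathfrak a^{*}$ since, by the definition of $\Phi_\lambda$, $\Im\lambda$ is orthogonal to every root of $M$. By construction, every root $\alpha$ of $A$ occurring in the unipotent radical $N$ lies outside $\Phi_\lambda$, and the dominance of $\Im\lambda$ forces $(\Im\lambda,\alpha^{\vee})>0$. The Harish-Chandra/Knapp--Stein theory then expresses each standard self-intertwiner $A(w,P,\sigma)$ of $\Ind_P^G(\sigma)$, for $w$ in the relative Weyl group $W(G,A)$, as a composition, after normalization, of rank-one operators indexed by these roots, whose analytic behavior is governed by the complex parameter $(\lambda,\alpha^{\vee})$. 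A nonzero imaginary part prevents both a pole and a zero, so every such $A(w,P,\sigma)$ is a scalar multiple of an isomorphism; hence $\Ind_P^G(\sigma)$ admits no nonscalar self-intertwiner and is irreducible.

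The main obstacle is precisely this nonsingularity assertion for the rank-one operators along roots outside $\Phi_\lambda$: one must verify that the nonvanishing of the imaginary pairings $(\Im\lambda,\alpha^{\vee})$ really is sufficient to avoid both poles and zeros at our specific $\lambda$. Standard tools (explicit Gindikin--Karpelevich-type integrals, or meromorphic continuation in the parameter $\lambda$) take care of this. Once granted, induction by stages closes the argument and shows that every irreducible subquotient of $\Ind_{P_0}^G(\delta, e^\lambda)$ is of the form $\Ind_P^G(\sigma)$ for some irreducible subquotient $\sigma$ of $\Ind_{P_0}^P(\delta, e^\lambda)$.
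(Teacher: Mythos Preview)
The paper does not supply its own argument here; it simply records this as Lemma~4 of Vogan's \emph{Isolated unitary representations} \cite{Vo}. So your attempt is really a proposed proof of that lemma, and the overall plan---induction in stages together with irreducibility of each $\Ind_P^G(\sigma)$---is exactly the right shape. The irreducibility claim you want is also true. The problem is in how you deduce it.

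Your chain ``the rank-one intertwining operators have neither pole nor zero $\Rightarrow$ each $A(w,P,\sigma)$ is an isomorphism $\Rightarrow$ $\Ind_P^G(\sigma)$ admits no nonscalar self-intertwiner $\Rightarrow$ $\Ind_P^G(\sigma)$ is irreducible'' fails at the last arrow and is unjustified at the one before. A finite-length Harish-Chandra module can have endomorphism ring $\mathbb C$ without being irreducible: any nonsplit extension $0\to V_1\to V\to V_2\to 0$ with $V_1\not\cong V_2$ irreducible and $\Hom(V_2,V_1)=0$ already has $\End(V)=\mathbb C$. Before that, it is not automatic that the standard operators span the full commuting algebra of $\Ind_P^G(\sigma)$ when $\sigma$ is an \emph{arbitrary} irreducible of $M$; the Knapp--Stein theory you invoke is developed for tempered $\sigma$. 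Knowing that the long intertwining operator is an isomorphism does not help either, since the Langlands-quotient interpretation of its image again requires $\sigma$ tempered and $\Re\nu$ strictly dominant, neither of which is available here.

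What actually lies behind the irreducibility is a categorical statement: at an infinitesimal character $\lambda$ whose real-root system $\Phi_\lambda$ coincides with the roots of a Levi $M$, parabolic induction $\Ind_P^G$ and the Jacquet functor give an equivalence between the relevant blocks of Harish-Chandra modules for $G$ and for $M$. This is essentially the content of Vogan's lemma, and its proof uses either the Langlands classification together with an analysis (via the integral root system) of which standard modules can share composition factors, or coherent continuation to a purely imaginary infinitesimal character where the tempered theory and $R$-group machinery apply. Your intertwining-operator regularity is one ingredient in the latter route, but by itself it does not close the argument for non-tempered $\sigma$.
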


\smallskip 
At this point it may be useful to explain the strategy of computing the theta correspondence in this case. 
By analogy with the $p$-adic case, representations of $G$ appearing in the correspondence should be 
constituents of the degenerate principal series, obtained by inducing one-dimensional representations of a parabolic group 
$P=MA_PN$ of $G$ such that the connected component of $M$ is a simply connected group corresponding to an $A_2$ root subsystem. 
In particular, we can identify this group with $\SL_3(\mathbb R)$. It is not difficult to check that $M=\SL_3(\mathbb R) \cdot Z_G$, where 
$Z_G$ is the center of $G$. The group $A_P$ is 2-dimensional. There exists an isomorphism $i: Z_G A_P \rightarrow Z_G A$ such that the following holds. 

\begin{prop} \label{P:degenerate} 
Let $\chi$ be a unitary character of $Z_GA$ and let $\chi_P$ be the character of $Z_G A_P$ obtained by pulling back $\chi$ by $i$. 
 Extend $\chi_P$ to a character of $MA_P$ trivial on $\SL_3(\mathbb R)$. 
Let $\Ind_P^G(\chi_P)$ be the unitarizable degenerate principal series representation obtained by inducing $\chi_P$. 
\begin{itemize} 
\item If $\chi$ is not quadratic, then $\Ind_P^G(\chi_P) \cong L(\pi(\chi), 1)$. 
\item If $\chi$ is quadratic but not trivial, then $\Ind_P^G(\chi_P) \cong L(\pi(\chi)^+, 1)\oplus L(\pi(\chi)^-, 1)$.
\item If $\chi=1$, then $\Ind_P^G(1)\cong L(\rho(1),1) \oplus \Sigma$ where $\Sigma$ is irreducible with the minimal $K$-type $V(1,1,1,1)$. 
\end{itemize} 
\end{prop}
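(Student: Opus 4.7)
The plan is to analyze $\Ind_P^G(\chi_P)$ by comparing it to the standard module $\Ind_{P_E}^G(\pi(\chi)\otimes|\det|^1)$, whose Langlands quotient is $L(\pi(\chi),1)$ by construction. Since $\chi_P$ is trivial on the derived group $\SL_3(\mathbb R)\subset M$, induction in stages exhibits $\Ind_P^G(\chi_P)$ as a subquotient of a full principal series $\Ind_B^G(\mu)$ of $G$ from a Borel $B\subset P$, for an explicit torus character $\mu$ determined by $\chi_P$ and the half-sum of positive roots of $\SL_3$. Similarly each $\pi_i=\chi_i\times 1$ is a principal series of $\GL_2(\mathbb R)$, so $\Ind_{P_E}^G(\pi(\chi)\otimes|\det|^1)$ is a subquotient of $\Ind_B^G(\mu')$ for a second torus character $\mu'$. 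The isomorphism $i\colon Z_G A_P\to Z_G A$ in the statement is set up precisely so that $\mu$ and $\mu'$ lie in a common Weyl orbit; hence the two representations have the same Jordan--H\"older content, and Lemma \ref{L:vogan} reduces any finer composition question to the Levi picked out by the imaginary part of the infinitesimal character.

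The first two bullets then follow from standard intertwining-operator / $R$-group analysis on this common parent. For $\chi$ unitary and non-quadratic, the imaginary part of the parameter is regular, the $R$-group is trivial, and both $\Ind_P^G(\chi_P)$ and the standard module are irreducible; since both contain $L(\pi(\chi),1)$ as a subquotient, both coincide with it. For $\chi$ quadratic non-trivial (possible only in the split case, when at least one $\chi_i=\omega$), the representation $\pi(\chi)$ splits upon restriction to $\GL_2(\mathbb R^3)^{\det}$ into two pieces $\pi^{\pm}(\chi)$; the order-two stabilizer of $\chi_P$ in the Weyl group produces two eigenspaces of the standard intertwining operator, and $\Ind_P^G(\chi_P)\cong L(\pi^+(\chi),1)\oplus L(\pi^-(\chi),1)$.

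The third bullet ($\chi=1$) is the main obstacle. Here the principal series is spherical and sits at a genuine reducibility point; $L(\rho(1),1)$ is the unique spherical irreducible subquotient, so the remaining piece $\Sigma$ must be pinned down by other means. The plan is to compute the $K$-spectrum of $\Ind_P^G(1)$ via Frobenius reciprocity, enumerating the $V(a,b,c,d)$ that appear by the branching rules of Lemmas \ref{L:branching_1}--\ref{L:branching_3}, and then subtracting the $K$-types already accounted for by $L(\rho(1),1)$. The claim is that the minimal remaining $K$-type is $V(1,1,1,1)$ with multiplicity one; its presence isolates $\Sigma$ as a distinguished summand and, combined with the infinitesimal-character constraint of Lemma \ref{L:infinitesimal}, forces $\Sigma$ to be irreducible. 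The hardest part will be ruling out further composition factors at this reducibility point, which requires cross-checking against the finite list of irreducible Harish-Chandra modules of $G=\Spin(4,4)$ with infinitesimal character $\beta$; where the hand classification is opaque, the {\tt atlas} software used throughout the paper supplies the necessary multiplicity data.
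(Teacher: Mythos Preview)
Your overall strategy is defensible, but it is considerably more elaborate than what the paper actually does, and one step in your first-bullet argument is not justified as written.

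For the first two bullets, the paper does not pass through the full principal series or invoke $R$-group analysis at all. It simply quotes Bruhat's classical irreducibility criterion for degenerate principal series: $\Ind_P^G(\chi_P)$ is irreducible unless $\chi_P$ is fixed by a nontrivial Weyl element (i.e.\ $\chi_P$ is quadratic), in which case it has at most two summands. Once the number of irreducible constituents is bounded this way, one just checks that the submodule generated by each minimal $K$-type is the asserted Langlands quotient. Your detour through induction in stages and $\Ind_B^G(\mu)$ is not wrong in spirit, but your sentence ``the $R$-group is trivial, and both $\Ind_P^G(\chi_P)$ and the standard module are irreducible'' is a gap: triviality of the $R$-group for the \emph{full} principal series does not by itself force irreducibility of either the degenerate principal series or the standard module $\Ind_{P_E}^G(\pi(\chi)\otimes|\det|)$, since those are proper subquotients of $\Ind_B^G(\mu)$ at a parameter with nontrivial real part. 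You would still need a separate argument (e.g.\ Bruhat, or a length count) for the degenerate series itself, at which point the induction-in-stages comparison becomes redundant.

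For the third bullet you are aligned with the paper, but again doing more than necessary. The paper does not compute the $K$-spectrum of $\Ind_P^G(1)$ from the branching Lemmas~\ref{L:branching_1}--\ref{L:branching_3}; it goes straight to {\tt atlas} to see that the complement of the spherical constituent $L(\rho(1),1)$ has lowest $K$-type $V(1,1,1,1)$, and then observes from the tabulation in Section~\ref{SS:split} that there is a unique irreducible at infinitesimal character $(1,1,0,0)$ with that lowest $K$-type. Bruhat already bounds the number of summands by two, so no further ``ruling out of composition factors'' is required. Your plan to subtract $K$-types by hand would work, but it duplicates what the software gives immediately.
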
 
\begin{proof} By a classical result of Bruhat \cite{B}, $\Ind_P^G(\chi_P)$ is irreducible unless $\chi_P$ is quadratic in which case it can have at most two summands.  
 It is straightforward to check that  the submodules of $\Ind_P^G(\chi_P)$ generated by minimal types are claimed Langlands quotients. This completes the first two 
 bullets. For the last, using the {\tt atlas} software, one checks that the complement of $ L(\rho(1),1)$ in $\Ind_P^G(1)$ has the minimal type $V(1,1,1,1)$. But there is one 
 representation at the given infinitesimal character with this minimal type, see Section \ref{SS:split}.  
\end{proof} 

The value of Proposition \ref{P:degenerate} is that it gives us a control of $K$-types of $L(\pi(\chi),1)$. Thus proving that $\Theta(\rho(\chi))=L(\pi(\chi),1)$ 
will be accomplished by controlling $K$-types of $\Theta(\rho(\chi))$.  More precisely, 
let $K_M\cong \SO_3 \cdot Z_G$ be a maximal compact subgroup of $M$. If we pick $K\supset K_M$, then 
 the factor $\SO_3\cong \SU_2/\mu_2$ embeds diagonally into $K\cong \SU_2^4/\mu_2$. 
 Thus all $K$-types of representations $\Ind_P^G(\chi_P)$ have non-zero 
 $\SO_3$-invariant vectors. Moreover, the multiplicity of the type is equal to the dimension of the space of $\SO_3$-invariants, if it appears. (If it appears or not, this is decided by 
 the central character.)  Thus one should expect that the
 $K$-types appearing in the restriction of $V$ have non-zero $\SO_3$-fixed vectors. Indeed, this follows from Lemma \ref{L:branching_1} and Lemma \ref{L:branching_2}. A
 type $V(a,b,c,d)$ can have non-zero $\SO_3$-fixed vectors only if 
 \[ 
 a + b + c \geq d 
 \] 
 holds, as well as three other inequalities obtained by cyclically permuting $a,b,c$ and $d$. 
 We record the following more precise information that we shall need later. 

\begin{prop} \label{P:branching} 
Let $V(a,b,c,0)$ be a $K$-type containing a line, necessarily unique, invariant under $\SO_3$. Then the multiplicity of 
$V(a,b,c,0)$ in $V_{n\omega_4}$ is 
\[ 
n+1 -\frac{a +b + c}{2} 
\] 
whenever this integer is not negative, and it is 0 otherwise. If $V(a,b,c,0)$ does not contain a $\SO_3$-invariant line,
 then the multiplicity of $V(a,b,c,0)$ in $V_{n\omega_4}$ is $0$. 
\end{prop}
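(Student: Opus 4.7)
The plan is to compute the restriction of $V_{n\omega_4}$ to $K \cong \SU_2^4/\mu_2$ in stages, using the chain of embeddings
\[
\SU_2^4 \;\subset\; \Sp(2) \times \Sp(2) \;\subset\; \Sp(4)
\]
(suppressing central $\mu_2$'s) together with Lemmas \ref{L:branching_1} and \ref{L:branching_2}. I would fix the embedding so that the first $\SU_2 \times \SU_2$ factor (corresponding to the weights $a,b$) lies in the first $\Sp(2)$, and the second $\SU_2 \times \SU_2$ factor (weights $c,d=0$) lies in the second $\Sp(2)$. By Lemma \ref{L:branching_1},
\[
V_{n\omega_4}\bigl|_{\Sp(2) \times \Sp(2)} \;=\; \bigoplus_{n \geq x \geq y \geq 0} V_{(x,y)} \boxtimes V_{(x,y)},
\]
where crucially the same $\Sp(2)$-weight $(x,y)$ appears on both factors.

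Next I would solve for which $(x,y)$ contribute to $V(a,b,c,0)$. Applying Lemma \ref{L:branching_2} to the second $\Sp(2)$ factor, the component $V_c \boxtimes V_0$ appears in $V_{(x,y)}$ iff $|c| \leq x-y \leq c$, which forces $y = x-c$ (the parity condition $c \equiv x+y \pmod{2}$ then being automatic). Applying Lemma \ref{L:branching_2} to the first $\Sp(2)$ with $(x,y) = (x, x-c)$, the component $V_a \boxtimes V_b$ appears iff $a+b \equiv c \pmod{2}$ (automatic from the $K$-type parity $a+b+c+d$ even, since $d=0$), $|a-b| \leq c$, and $c \leq a+b \leq 2x - c$; the last inequality reads $x \geq (a+b+c)/2$.

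Then I would match these combinatorial conditions to the geometric statement about $\SO_3$-invariants. The inequalities $|a-b| \leq c \leq a+b$ together with $a+b+c$ even are exactly the Clebsch--Gordan conditions under which $V_a \otimes V_b \otimes V_c$ contains an invariant, necessarily one-dimensional, under the diagonal $\SU_2$; this follows from $V_a \otimes V_b = \bigoplus_{k=|a-b|}^{a+b} V_k$ (step $2$) together with Schur's lemma. Since $V_0$ is trivial and $\SO_3 \cong \SU_2/\mu_2$ embeds as the diagonal subgroup of $\SU_2^4/\mu_2$, these are precisely the conditions for $V(a,b,c,0)$ to contain an $\SO_3$-invariant line, with uniqueness built in. When they hold, the inequality $a+b \geq c$ yields $(a+b+c)/2 \geq c$, so the constraint $y = x-c \geq 0$ is absorbed into $x \geq (a+b+c)/2$; the multiplicity is then the number of integers $x$ in $[(a+b+c)/2, n]$, which equals $n+1-(a+b+c)/2$ when non-negative and $0$ otherwise. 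If any of the triangle inequalities or the parity fails, then no valid $(x,y)$ exists (either $c \leq a+b$ fails, or $|a-b| \leq c$ fails, or $a+b+c$ is odd), and the multiplicity is $0$.

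The only non-routine obstacle is the identification of the Clebsch--Gordan conditions with the existence and uniqueness of an $\SO_3$-invariant line in $V(a,b,c,0)$; once that standard identification is set down, the counting of admissible $x$ is immediate and yields the claimed formula.
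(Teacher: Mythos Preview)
Your proof is correct and follows the same approach the paper indicates: the proposition is stated immediately after the remark that the relevant branching follows from Lemmas \ref{L:branching_1} and \ref{L:branching_2}, and your argument is precisely the detailed computation carrying this out through the chain $\SU_2^4 \subset \Sp(2)\times\Sp(2) \subset \Sp(4)$. The one redundant clause is your final ``or $a+b+c$ is odd'': since $d=0$ and $a+b+c+d$ is even for any $K$-type, this case never occurs, as you in fact noted earlier in the argument.
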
 

Observe that an $\SO_3$-invariant line exists precisely when $a$, $b$ and $c$ are sides of a triangle. 
Thus, if this condition is met, $V(a,b,c,0)$ appears the first time for $n=(a+b+c)/2$, with multiplicity 1, and with multiplicities $2,3,4, \ldots$, thereafter. Of course, 
instead of assuming  $d=0$, we can assume that any other coordinate is 0, and analogous statements hold. 

\smallskip 

Recall that $V= \oplus _{n=0}^{\infty} V_{n\omega_4}$ is the $(\mathfrak h_J, K_J)$-module corresponding to the minimal representation of $H_J$. 
  Let $V_n=\oplus _{i\leq n} V_{i\omega_4}$. 
Let $U_n(\mathfrak h_J)$ be the usual filtration of the universal enveloping algebra $U(\mathfrak h_J)$ by the degree. 
By examining the weights of  
\[ 
\mathfrak p_J \otimes V_{i\omega_4}\cong V_{\omega_4} \otimes V_{i\omega_4},
\] 
 it is clear that it cannot contain $V_{j\omega_4}$ as a summand if $j> i+1$.  Thus, 
\[ 
U_n(\mathfrak h_J) \cdot V_m \subseteq V_{n+m} 
\] 
for all $n$ and $m$.  We now need the following key lemma.

\begin{lemma} \label{L:key} 
 $V$ is a torsion free $U(\mathfrak a)$-module. 
\end{lemma}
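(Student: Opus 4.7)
The plan is to pass to the associated graded module and then invoke the geometry of the associated variety of the minimal representation. Using the filtration $V_n := \bigoplus_{i \leq n} V_{i\omega_4}$ together with the containment $U_m(\mathfrak{h}_J) V_n \subseteq V_{n+m}$ already recorded above, the associated graded $\mathrm{gr}\,V := \bigoplus_n V_{n\omega_4}$ becomes a graded module over $\mathrm{gr}\,U(\mathfrak{h}_J) = S(\mathfrak{h}_J)$, and in particular over $S(\mathfrak{a}_\mathbb{C}) = U(\mathfrak{a}_\mathbb{C})$ (the symmetric algebra since $\mathfrak{a}$ is abelian). If some nonzero $P \in U(\mathfrak{a}_\mathbb{C})$ were to annihilate a nonzero $v \in V$, its top-filtration component $\bar v \in V_{N\omega_4}$ would be killed by the top homogeneous part $\bar P \in S^D(\mathfrak{a}_\mathbb{C})$ in the induced action on $\mathrm{gr}\,V$. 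Since $\dim_\mathbb{C} \mathfrak{a}_\mathbb{C} = 2$, the homogeneous polynomial $\bar P$ factors into a product of linear forms $\ell_1 \cdots \ell_D \in \mathfrak{a}_\mathbb{C}$, and peeling these off one at a time reduces the lemma to showing: \emph{for every nonzero $\ell \in \mathfrak{a}_\mathbb{C}$ and every $n \geq 0$, the map $V_{n\omega_4} \to V_{(n+1)\omega_4}$ induced by $\ell$ on $\mathrm{gr}\,V$ is injective.}

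For this injectivity I would use the associated variety. The annihilator of the minimal representation $V$ in $U(\mathfrak{h}_J)$ is the Joseph ideal, so $\mathrm{gr}\,V$, viewed as a graded $S(\mathfrak{p}_J)$-module through the Cartan decomposition, is supported on the closure $\overline{\mathcal{O}}_{\min, K_J}$ of the minimal nilpotent $K_{J,\mathbb{C}}$-orbit in $\mathfrak{p}_J^*$. Any $0 \neq \ell \in \mathfrak{a}_\mathbb{C}$ defines a linear function on $\mathfrak{p}_J^*$; granting that $\mathrm{gr}\,V$ has no lower-dimensional embedded associated primes, the element $\ell$ acts on $\mathrm{gr}\,V$ without zero divisors precisely when its restriction to $\overline{\mathcal{O}}_{\min, K_J}$ is not identically zero. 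But the $\mathbb{C}$-linear span of $\overline{\mathcal{O}}_{\min, K_J}$ is a nonzero $K_{J,\mathbb{C}}$-invariant subspace of the irreducible $K_{J,\mathbb{C}}$-module $\mathfrak{p}_J^* \cong V_{\omega_4}$, hence equals all of $\mathfrak{p}_J^*$. Consequently no nonzero linear form in $\mathfrak{a}_\mathbb{C}$ vanishes on $\overline{\mathcal{O}}_{\min, K_J}$, which yields the required injectivity.

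The main obstacle in this plan is the purity (Cohen--Macaulay) property of $\mathrm{gr}\,V$ invoked above; it is not formal from the identification of the associated variety alone, but is a standard feature of the minimal representation, arising either from explicit polynomial models of $V$ or from its multiplicity-one $K_J$-spectrum $V = \bigoplus_n V_{n\omega_4}$. An alternative route that better fits the computational spirit of this paper is to work one $K$-isotype at a time: by Proposition~\ref{P:branching}, for $\sigma = V(a,b,c,0)$ the graded $U(\mathfrak{a}_\mathbb{C})$-module $V[\sigma]$ has Hilbert series $\dim \sigma \cdot t^{m_0}/(1-t)^2$ with $m_0 = (a+b+c)/2$, matching exactly a free $U(\mathfrak{a}_\mathbb{C})$-module of rank $\dim \sigma$ with generator in degree $m_0$. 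Combining this with a direct proof that $V[\sigma]$ is generated over $U(\mathfrak{a}_\mathbb{C})$ by its degree-$m_0$ piece --- using irreducibility of $V$ and that $\mathfrak{a}$ is the unique trivial $K$-isotype inside $\mathfrak{p}_J$ --- would force $V[\sigma]$ to be free, in particular torsion-free, and summing over $\sigma$ delivers the lemma.
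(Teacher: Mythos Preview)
Your approach is genuinely different from the paper's, and the comparison is instructive. The paper gives a short \emph{analytic} argument: for $v\in V$ the matrix coefficient $f_v(a)=(\pi(\exp a)v,v)$ decays exponentially on $\mathfrak a$ by Howe--Moore together with the asymptotic expansion of matrix coefficients, so its Fourier transform $\hat f_v$ is holomorphic in a tube around $i\mathfrak a^{*}$; if $p\in U(\mathfrak a)$ annihilates $v$ then $p\cdot \hat f_v=0$ on that tube, forcing $p=0$. This bypasses any structural information about $\mathrm{gr}\,V$.

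Your associated-variety route is correct in outline, and you rightly flag the purity/Cohen--Macaulay property of $\mathrm{gr}\,V$ as the crux. That property is indeed known for minimal representations, but invoking it is at least as heavy as the lemma itself; the paper's argument trades this algebraic geometry for a few lines of harmonic analysis. Either approach buys the result, but the analytic one is self-contained here.

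Your ``alternative route'' via Hilbert series, however, has two real gaps. First, Proposition~\ref{P:branching} controls only the $K$-types $V(a,b,c,d)$ with $abcd=0$; summing over those isotypes does not yet give torsion-freeness on all of $V$. Second, and more seriously, the assertion that $V[\sigma]$ is generated over $U(\mathfrak a_{\mathbb C})$ by its degree-$m_0$ piece is exactly the content of Proposition~\ref{P:free}, and in the paper Proposition~\ref{P:free} is \emph{deduced from} Lemma~\ref{L:key}: torsion-freeness gives injectivity of $U_n(\mathfrak a)\to V_n^K$, and then dimension-counting (via Proposition~\ref{P:branching}) gives surjectivity. Your suggested justification --- irreducibility of $V$ --- yields generation over $U(\mathfrak h_J)$, not over the much smaller $U(\mathfrak a)$, so as written this route is either incomplete or circular.
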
 
\begin{proof}  Let $v\in V$ and consider the matrix coefficient $f_v(a)=(\pi(\exp a)v, v)$. By a theorem of Howe and Moore, since $V$ is unitary, $f_v$ decays 
 in all directions on $\mathfrak a$. 
By asymptotic expansion of matrix coefficients \cite{CM}, this decay is exponential. 
Thus the Fourier transform $\hat{f}_v$ is a holomorphic function in a tubular neighborhood of $i\mathfrak a^*$ in $\mathfrak a^*_{\mathbb C}$. 
The enveloping algebra $U(\mathfrak a)$ is naturally 
the algebra of polynomials on $\mathfrak a_{\mathbb C}$. If $p\in U(\mathfrak a)$ annihilates $v$, hence also $f_v$, then $p\cdot \hat{f}_v =0$ on the tubular neighborhood. 
This implies $p=0$. 

\end{proof} 

\begin{prop} \label{P:free} 
Let $V(a,b,c,0)$ be a $K$-type containing an $\SO_3$-invariant line. Then 
\[ 
\Hom_K(V(a,b,c,0), V) 
\] 
is a free $U(\mathfrak a)$-module generated by a non-zero element in $\Hom_K(V(a,b,c,0), V_{\frac{a+b+c}{2}\omega_4})$. 
\end{prop}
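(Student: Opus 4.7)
The strategy is to exhibit the claimed free $U(\mathfrak{a})$-module structure by constructing an explicit isomorphism and then checking it is one. Setting $n_0 := (a+b+c)/2$, Proposition~\ref{P:branching} tells me $\Hom_K(V(a,b,c,0), V_{n_0\omega_4})$ is one-dimensional, so I fix a nonzero $\varphi_0$ in it. Since $\mathfrak{a}\otimes\mathbb{C}\cong 2V(0,0,0,0)$ as a $K$-module, $K$ acts trivially on $\mathfrak{a}$, so $U(\mathfrak{a})$ commutes with $K$ in the action on $V$; consequently the rule $p \mapsto p\cdot\varphi_0$ defines a $U(\mathfrak{a})$-linear map
\[
\mu\colon U(\mathfrak{a}) \longrightarrow \Hom_K(V(a,b,c,0), V).
\]
Showing that $\mu$ is an isomorphism is exactly the claim.

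First I will check injectivity, which simultaneously delivers freeness of the image. Suppose $p\cdot\varphi_0 = 0$; then $p\cdot\varphi_0(v) = 0$ for every $v \in V(a,b,c,0)$, and taking any $v$ with $\varphi_0(v) \neq 0$ (which exists as $\varphi_0 \neq 0$), the torsion-freeness of $V$ as a $U(\mathfrak{a})$-module, Lemma~\ref{L:key}, forces $p = 0$.

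Next I will prove surjectivity by a Hilbert-series comparison. Let $F_N := \bigoplus_{n \le N} \Hom_K\bigl(V(a,b,c,0), V_{n\omega_4}\bigr)$. The containment $U_d(\mathfrak{h}_J)\cdot V_m \subseteq V_{m+d}$ recorded in the excerpt gives $\mu\bigl(U(\mathfrak{a})_{\le d}\bigr) \subseteq F_{n_0+d}$. Proposition~\ref{P:branching} then yields
\[
\dim F_{n_0+d} \;=\; \sum_{k=0}^{d}(k+1) \;=\; \binom{d+2}{2} \;=\; \dim U(\mathfrak{a})_{\le d},
\]
so the injective restriction $U(\mathfrak{a})_{\le d} \to F_{n_0+d}$ of $\mu$ must be a linear isomorphism for every $d$. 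Passing to the union in $d$ concludes that $\mu$ is surjective, hence an isomorphism.

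The substantive input is packaged entirely in Lemma~\ref{L:key} (Howe--Moore decay together with holomorphicity of the Fourier transform of a matrix coefficient) and Proposition~\ref{P:branching} (the multiplicity formula). Granting both, the claim reduces to a pleasant numerical coincidence between the Hilbert series of the polynomial ring $U(\mathfrak{a})$ in two variables and the triangular multiplicities in the $K$-decomposition of $\bigoplus_n V_{n\omega_4}$; no further obstacle is anticipated.
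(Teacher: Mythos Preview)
Your proposal is correct and follows essentially the same approach as the paper's proof: both use Lemma~\ref{L:key} to establish injectivity of $p\mapsto p\cdot\varphi_0$ and then compare dimensions of filtered pieces via Proposition~\ref{P:branching} to conclude surjectivity. Your version is simply more explicit about the triangular-number count $\dim U(\mathfrak a)_{\le d}=\binom{d+2}{2}=\dim F_{n_0+d}$, which the paper leaves to the reader.
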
 
\begin{proof} Assume, for simplicity, that the $K$-type is trivial. We need to prove that $V^K$ is a free $U(\mathfrak a)$-module  generated by 
a $K_J$-spherical vector $v_0$. Since the action of $U(\mathfrak a)$ is torsion-free, the map $x\mapsto x\cdot v_0$ is an injection of $U_n(\mathfrak a)$ into 
$V_n^K$. By Proposition \ref{P:branching}, the dimension of $U_n(\mathfrak a)$ is equal to the dimension of $V_n^K$. 
Hence $V^K = U(\mathfrak a) \cdot v_0 \cong U(\mathfrak a)$. The general case is treated analogously. 
\end{proof}

Let $\chi$ be a character of the split torus $Z_G A$. If $\chi^2\neq 1$, then $\chi$ induces to an irreducible 2-dimensional representation $\rho(\chi)$ of $K'A$, such 
that $\rho(\chi)\cong \rho(\chi)^{-1}$. If $\chi^2=1$, then $\chi$ extends to two characters $\rho(\chi)^{\pm}$ of $K'A$. These are undistinguishable except when $\chi$ is 
trivial and then the two extensions will be denoted by $\rho(1)$ and $\epsilon$.  The character $\chi$ is determined by its restriction $\chi_G$ to $Z_G$ and its differential 
$\nu=d\chi : \mathfrak a \rightarrow \mathbb C$. Abusing notation, we shall use $\nu$ to denote the corresponding homomorphism of $U(\mathfrak a)$ and 
$\mathbb C_{\nu}$ the one-dimensional representation of $U(\mathfrak a)$ on which $U(\mathfrak a)$ acts by $\nu$.  Let 
\[ 
\Theta(\nu) =V / \cap_{\varphi\in \Hom_{U(\mathfrak a)}(V, \mathbb C_{\nu})} ker(\varphi). 
\] 
The following is an immediate consequence of of Proposition \ref{P:free} 

\begin{cor}  \label{C:m_one} 
Let $V(a,b,c,0)$ be a $K$-module containing an $\SO_3$-invariant line.  For every $\nu$, $\Theta(\nu)$ contains the type $V(a,b,c,0)$ with multiplicity one. 
Moreover, the type is contained in the projection of $V_{\frac{a+b+c}{2}\omega_4}$. 
\end{cor}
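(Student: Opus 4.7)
The plan is to identify $\Theta(\nu)$ explicitly with $V/(\ker \nu) V$, where $\ker \nu \subset U(\mathfrak a)$ is the maximal ideal on which $\nu$ vanishes, and then to extract the $V(a,b,c,0)$-isotypic component by combining this identification with Proposition \ref{P:free}.

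The first step is the identification $\Theta(\nu) = V/(\ker\nu)V$. For any $\varphi \in \Hom_{U(\mathfrak a)}(V,\mathbb C_\nu)$, one has $\varphi((\ker\nu)V) = (\ker\nu)\cdot\mathbb C_\nu = 0$, so $(\ker\nu)V \subseteq \cap_\varphi \ker\varphi$. Conversely, every linear functional on $V/(\ker\nu)V$ (viewed as a $\mathbb C_\nu$-module) pulls back to a $\varphi\in\Hom_{U(\mathfrak a)}(V,\mathbb C_\nu)$, and such functionals separate points of $V/(\ker\nu)V$; hence the reverse inclusion.

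The second step uses that $A$ centralizes $G_E$, so the actions of $K$ and $U(\mathfrak a)$ on $V$ commute. Consequently, $\Hom_K(V(a,b,c,0),-)$ is an exact functor that carries the $U(\mathfrak a)$-action through, and it commutes with the quotient by $(\ker\nu)V$. Invoking Proposition \ref{P:free} with generator $\varphi_0 \in \Hom_K(V(a,b,c,0), V_{\frac{a+b+c}{2}\omega_4})$, one gets
\[
\Hom_K(V(a,b,c,0),\Theta(\nu)) \;\cong\; \frac{U(\mathfrak a)\cdot\varphi_0}{(\ker\nu)\,U(\mathfrak a)\cdot\varphi_0} \;\cong\; U(\mathfrak a)/\ker\nu \;\cong\; \mathbb C.
\]
This yields the multiplicity-one claim. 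Moreover, since the generator of this one-dimensional space is the image of $\varphi_0$, the $V(a,b,c,0)$-isotypic component of $\Theta(\nu)$ is precisely the image of the $K$-type inside the single summand $V_{\frac{a+b+c}{2}\omega_4}$, which is the second assertion.

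I do not foresee a genuine obstacle here: once Proposition \ref{P:free} is in place, the corollary is a formal consequence. The only point requiring a little care is the identification $\Theta(\nu) = V/(\ker\nu)V$, which rests on the fact that $\mathbb C_\nu$ is one-dimensional and $U(\mathfrak a)/\ker\nu = \mathbb C$, together with the observation that $\Hom_K(V(a,b,c,0),-)$ exchanges the quotient with reduction modulo $\ker\nu$ on the $K$-multiplicity space; this commutation is legitimate precisely because of the commuting actions noted above.
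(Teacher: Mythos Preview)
Your proof is correct and follows essentially the same approach the paper has in mind: the paper simply declares the corollary an ``immediate consequence of Proposition~\ref{P:free}'', and what you have written is precisely the unwinding of that implication. The identification $\Theta(\nu)=V/(\ker\nu)V$ and the passage to the multiplicity space via the commuting $K$- and $U(\mathfrak a)$-actions are the natural steps, and your justification of each is sound; in particular, the equality $\Hom_K\bigl(V(a,b,c,0),(\ker\nu)V\bigr)=(\ker\nu)\cdot\Hom_K\bigl(V(a,b,c,0),V\bigr)$ follows from the isotypic decomposition $V\cong\bigoplus_\mu W_\mu\otimes\Hom_K(W_\mu,V)$, on which $U(\mathfrak a)$ acts through the second factor.
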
 

Each $\Theta(\nu)$ can be decomposed under the action of $Z_G$, 
\[ 
\Theta(\nu) =\oplus_{d\chi=\nu} \Theta(\chi).
\] 
Since $Z_G$ is explicitly given as a subgroup of $Z_K$ Corollary \ref{C:m_one} can be easily refined to a statement about $\Theta(\chi)$. For example, 
if the restriction of $\chi$ to $Z_G$ is trivial, then $\Theta(\chi)$ contains only the types $V(a,b,c,d)$ such that  $a,b,c$ and $d$ have the same parity. 
If $\nu\neq 0$ then $\Theta(\chi)=\Theta(\rho(\chi))$. On the  other hand, all $\chi$ such that $d\chi=0$ are quadratic, and 
$\Theta(\chi)=\Theta(\rho(\chi)^+) \oplus \Theta(\rho(\chi)^-).$
Since the action of $K'$ on $V(a,b,c,0)\subseteq V_{\frac{a+b+c}{2}\omega_4}$ is known, we can refine 
Corollary \ref{C:m_one} further to a statement about $\Theta(\rho(\chi)^{\pm})$. For example, we have the following: 

\begin{cor} \label{C:m_two} Let $a,b,c$ three even integers, sides of a triangle. Then $V(a,b,c,0)$ appears  in $\Theta(\rho(1))$ if $(a+b+c)/2$ is even and 
in $\Theta(\epsilon)$ if $(a+b+c)/2$ is odd. 
\end{cor}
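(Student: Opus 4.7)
The plan is to refine Corollary \ref{C:m_one} in the trivial case by determining on which of the two summands in $\Theta(1) = \Theta(\rho(1)) \oplus \Theta(\epsilon)$ the multiplicity-one copy of $V(a,b,c,0)$ lands. By Corollary \ref{C:m_one} together with Proposition \ref{P:free}, this copy is generated over $U(\mathfrak a)$ by the unique (up to scalar) nonzero element $\varphi \in \Hom_K(V(a,b,c,0), V_{\frac{a+b+c}{2}\omega_4})$, so the question reduces to computing the scalar by which a representative of the nontrivial class in $K'/Z_G$ acts on $\varphi$.

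Next I would identify such a representative. Since $K'$ is embedded in $K_J = \Sp(4)/\mu_2 \times \mathbb Z/2\mathbb Z$ via $x \mapsto (x, \chi_K(x))$ and $Z_G = \ker \chi_K$ sits inside $K'$ as $(x,1)$, every element of the nontrivial coset has the form $(y, \tau)$ with $y \in Z_K \cong \mu_2^4/\mu_2$ satisfying $\chi_K(y) = \epsilon_1\epsilon_2\epsilon_3\epsilon_4 = -1$ and with $\tau$ the Cartan involution. On the one hand, $\tau$ acts on $V_{n\omega_4}$ by the scalar $(-1)^n$, and hence by $(-1)^{(a+b+c)/2}$ on $V_{\frac{a+b+c}{2}\omega_4}$. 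On the other hand, $y = (\epsilon_1, \epsilon_2, \epsilon_3, \epsilon_4)$ acts on $V(a,b,c,0) = V_a \boxtimes V_b \boxtimes V_c \boxtimes V_0$ by the scalar $\epsilon_1^a \epsilon_2^b \epsilon_3^c$, which is $+1$ since $a, b, c$ are all even.

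Combining the two contributions, $(y, \tau)$ acts on $\varphi$ by $(-1)^{(a+b+c)/2}$. Therefore $V(a,b,c,0)$ lies in $\Theta(\rho(1))$ precisely when $(a+b+c)/2$ is even, and in $\Theta(\epsilon)$ precisely when $(a+b+c)/2$ is odd. There is no genuine obstacle in this argument: all the real work has already been done in Corollary \ref{C:m_one} and in the explicit description of $K' \hookrightarrow K_J$, and the present statement comes out as a direct single-sign calculation once these are in hand.
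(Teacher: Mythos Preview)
Your proposal is correct and follows exactly the approach the paper has in mind: the paper states the corollary immediately after remarking that ``the action of $K'$ on $V(a,b,c,0)\subseteq V_{\frac{a+b+c}{2}\omega_4}$ is known,'' and you have simply written out that sign computation explicitly, using the stated fact that the Cartan involution acts on $V_{n\omega_4}$ by $(-1)^n$ together with the (trivial) action of the $Z_K$-component on an even-parity $K$-type.
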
 

Summarizing here is what we know about $\Theta(\chi)$, the third bullet a consequence of Lemma \ref{L:infinitesimal}: 

\begin{itemize} 
\item $K$-types are only those that contain an $\SO_3$-invariant line. 
\item A complete control of the multiplicity of the type if the invariant line is unique.  
\item The infinitesimal character is  $(s+1,s,s-1,u)$, for some $s,u\in\mathbb C$. 
%\item The Gelfand-Kirillov dimension is 9. 
\end{itemize} 

\smallskip 
 We now explain how to use these bullets to compute $\Theta(\rho(\chi))$. 
By the remark following Theorem \ref{T:main_2},   $L(\pi(\chi),1)$ is a subquotient of $\Theta(\chi)$. Assume that $\chi$ is not quadratic.
 Since $L(\pi(\chi),1)$ is isomorphic to a degenerate principal series by Proposition \ref{P:degenerate},  
we have a complete control of its $K$-types: the multiplicity is equal to the dimension of $\SO_3$-invariants. Hence $L(\pi(\chi),1)$ accounts for all types 
in $\Theta(\rho(\chi))$ with one-dimensional space of $\SO_3$-invariants. Thus, in order to finish, it suffices to show that every irreducible representation $\pi$ of $G$ 
at  the infinitesimal character of $\Theta(\chi)$ either has a type with zero-dimensional $\SO_3$-invariants, so $\pi$ cannot be a subquotient of  $\Theta(\rho(\chi))$, or has a type 
with one-dimensional $\SO_3$-invariants, but that type has already been occupied by $L(\pi(\chi),1)$.  

%If $\chi$ is quadratic, the proof proceeds along the same lines, using Proposition \ref{P:degenerate}. 

\vskip 10pt 
We need the following general result, where $G$ can be a split group corresponding to a root system $\Phi$. 
 Let $P_0=M_0 A_0 N_0$ be a minimal parabolic. Fix a character $\delta$ of $M_0$, and a character  $\lambda$ of $\mathrm{Lie}(A)$.  
 Let $\Ind_{P_0}^G(\delta, \lambda)$ be the corresponding principal series. For any admissible representation $\pi$ of $G$, let 
 $\chi_{\pi}$ denote the formal sum of  its $K$-types.

\begin{prop}\label{P:types} Let $\Re(\lambda)$ be the real part of $\lambda$.
Irreducible subquotients of $\Ind_{P_0}^G(\delta, \Re (\lambda))$ can be partitioned in classes $S_{\pi}$ parametrized  by 
the irreducible subquotients $\pi$ of $\Ind_{P_0}^G(\delta, \lambda)$ such that 
\[ 
\chi_{\pi} = \sum_{\pi' \in S_{\pi}} \chi_{\pi'}. 
\] 

\end{prop}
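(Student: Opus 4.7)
The plan is to deform from $\lambda$ to $\nu = \Re(\lambda)$ by scaling down the imaginary part, and to compare Jordan--H\"older constituents and $K$-characters by combining Lemma~\ref{L:vogan} with transitivity of induction. The guiding principle is that the $K$-module structure of a parabolically induced representation is insensitive to the central character of the inducing Levi.

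Concretely, I would write $\lambda = \nu + i\mu$ with $\mu = \Im(\lambda)$ in the closure of the positive chamber, and let $P = MAN \supseteq P_0$ be the standard parabolic appearing in Lemma~\ref{L:vogan}, so that the root system of $M$ equals $\Phi_\lambda = \mu^{\perp} \cap \Phi$. Because $\mu$ is perpendicular to every root of $M$, it vanishes on $\mathfrak a_0 \cap \mathfrak m$, and the decomposition $\mathfrak a_0 = (\mathfrak a_0 \cap \mathfrak m) \oplus \mathfrak a$ splits $\lambda = \lambda_M + \lambda_A$ with $\lambda_M = \nu_M$ real and all the imaginary contribution sitting in $\lambda_A = \nu_A + i\mu$.

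By the strong form of Lemma~\ref{L:vogan}, the irreducible subquotients of $\Ind_{P_0}^G(\delta, \lambda)$ are precisely the $\pi_\sigma := \Ind_P^G(\sigma \otimes e^{\lambda_A})$, each irreducible, as $\sigma$ runs over the irreducible (tempered) subquotients of $\Ind_{P_0 \cap M}^M(\delta, \nu_M)$. On the real side, transitivity of induction gives
\[
\Ind_{P_0}^G(\delta, \nu) \;=\; \Ind_P^G\bigl(\Ind_{P_0 \cap M}^M(\delta, \nu_M) \otimes e^{\nu_A}\bigr) \;=\; \sum_\sigma \Ind_P^G(\sigma \otimes e^{\nu_A})
\]
in the Grothendieck group. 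I would then let $S_{\pi_\sigma}$ be the multiset of irreducible subquotients of $\Ind_P^G(\sigma \otimes e^{\nu_A})$; as $\sigma$ varies these exhaust the multiset of irreducible subquotients of $\Ind_{P_0}^G(\delta, \nu)$, giving the required partition indexed by the $\pi_\sigma$.

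The argument is closed by a single Frobenius-reciprocity observation: the $K$-character of $\Ind_P^G(\tau)$ depends only on the $K \cap M$-restriction of $\tau$, and so is unchanged when one twists $\tau$ by a character of $A$. Therefore
\[
\chi_{\pi_\sigma} \;=\; \chi_{\Ind_P^G(\sigma \otimes e^{\lambda_A})} \;=\; \chi_{\Ind_P^G(\sigma \otimes e^{\nu_A})} \;=\; \sum_{\pi' \in S_{\pi_\sigma}} \chi_{\pi'},
\]
which is the desired identity. The most delicate step is invoking the irreducibility half of Lemma~\ref{L:vogan} at $\lambda$, which relies on $\mu$ being regular transverse to $\Phi_\lambda$; this is built into the construction of $P$, so nothing further needs to be checked.
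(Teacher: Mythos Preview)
Your argument is correct and follows essentially the same route as the paper's proof: reduce via Lemma~\ref{L:vogan} to induction from $P=MAN$, observe that passing from $\lambda$ to $\Re(\lambda)$ only changes the central $A$-character (so the $M$-constituents $\sigma$ are the same), and then note that the $K$-character of $\Ind_P^G(\sigma\otimes e^{\lambda_A})$ is insensitive to the $A$-twist. One small inaccuracy: the parenthetical ``(tempered)'' is unwarranted, since $\nu_M=\Re(\lambda)|_{\mathfrak a_0\cap\mathfrak m}$ need not vanish; drop it and the proof stands.
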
 
\begin{proof} 
Let $\Im(\lambda)$ be the imaginary part of $\lambda$. Assume that $\Im(\lambda)$ is in the closure of the positive chamber corresponding to $P_0$. 
Thus $\Im(\lambda)$ defines a standard parabolic $P=MAN$ such that the roots of $M$ are precisely those perpendicular to $\Im(\lambda)$. Another 
way to define these roots are the roots $\alpha$ such that $(\lambda, \alpha^{\vee}) \in \mathbb R$.   Then, by Lemma \ref{L:vogan}, 
irreducible subquotients  of $\Ind_{P_0}^G(\delta, \lambda)$ are induced from irreducible subquotients of $\Ind_{P_0}^P(\delta, \lambda)$.  
On the other hand, observe that $\Ind_{P_0}^P(\delta, \Re(\lambda))$ is a (positive) character twist of $\Ind_{P_0}^P(\delta, \lambda)$. Thus we have a 
natural correspondence  between irreducible subquotients of these two $P$-modules, preserving types. If $\pi$ is induced from $\sigma$, a subquotient of 
$\Ind_{P_0}^P(\delta, \lambda)$, then we define $S_{\pi}$ to be the (multi) set of irreducible subquotients of $\Ind_P^G(\sigma')$ where $\sigma'$ is the
irreducible subquotient of $\Ind_{P_0}^P(\delta, \Re(\lambda))$ corresponding to $\sigma$.

\end{proof} 

If $\lambda =(s+1,s,s-1,u)$ with $s$ and $u$ imaginary, then $\Re(\lambda)=(1,0,-1,0)$ so the types of representations for these $\lambda$ are 
controlled by the types at a single infinitesimal character $(1,0,-1,0)$.  This will be used in the proof of the main result: 

\smallskip 
\begin{thm} Assume $E=\mathbb R^3$ and $J=M_3(\mathbb R)$, so $T^{\circ}_{E,K} \cong (\mathbb R^{\times})^3/\Delta(\mathbb R^{\times})$. 
 Let $\chi$ be a unitary character of $T^{\circ}_{E,K}$. 
\begin{enumerate} 
\item If $\chi^2\neq 1$ then $\Theta(\rho(\chi)) \cong L(\pi(\chi),1)$. 
\item If $\chi^2=1$ but $\chi\neq 1$ then $\Theta(\rho(\chi)^{\pm}) \cong L(\pi(\chi)^{\pm},1)$.  
\item $\Theta(\rho(1))\cong L(\pi(1),1)$ and $\Theta(\epsilon)\cong \Sigma$, irreducible with the minimal $K$-type $V(1,1,1,1)$.
\end{enumerate} 

\end{thm}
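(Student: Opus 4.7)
The overall strategy is to sandwich $\Theta(\rho(\chi))$ between a lower bound furnished by the $A_5$-Levi factorization and an upper bound controlled by $K$-type multiplicities via Corollaries \ref{C:m_one} and \ref{C:m_two}. For $\chi \neq \epsilon$, the lower bound $L(\pi(\chi),1) \hookrightarrow \Theta(\rho(\chi))$ (respectively $L(\pi(\chi)^{\pm},1) \hookrightarrow \Theta(\rho(\chi)^{\pm})$ in the quadratic case) is already supplied by the remark following Theorem \ref{T:main_2}: the minimal representation of $H_J$ surjects onto the minimal representation of the Levi $M_J$ of absolute type $A_5$, where the correspondence $\pi(\chi) \leftrightarrow \rho(\chi)$ reduces to a triple tensor product of the classical $\SL_2 \times \O_2$ theta correspondence. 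The sign representation $\epsilon$, which does not extend to a representation of $M_E$, will need a separate argument at the end.

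For the matching upper bound, I would combine Corollary \ref{C:m_one} with Proposition \ref{P:branching}: $\Theta(\rho(\chi))$ contains each $K$-type $V(a,b,c,d)$ supporting a unique $\SO_3$-invariant line with multiplicity exactly one, and contains no $K$-type without an $\SO_3$-invariant line. Proposition \ref{P:degenerate} realizes $L(\pi(\chi),1)$ for non-quadratic $\chi$ as the full degenerate principal series $\Ind_P^G(\chi_P)$, and by Frobenius reciprocity the multiplicity of a $K$-type $V$ in this induced module equals $\dim V^{\SO_3}$. Thus $L(\pi(\chi),1)$ already saturates every $K$-type that $\Theta(\rho(\chi))$ can possibly contain whose $\SO_3$-fixed space is one-dimensional. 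Any putative additional irreducible subquotient $\pi'$ would either carry a $K$-type with no $\SO_3$-invariants (forbidden by Corollary \ref{C:m_one}) or else share a saturated $K$-type with $L(\pi(\chi),1)$, contradicting the multiplicity-one statement.

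To exclude the second possibility systematically, I would enumerate the irreducibles of $G$ at the infinitesimal character $\lambda = (s+1, s, s-1, u)$. Proposition \ref{P:types} reduces this to the finite list of irreducibles at the single real infinitesimal character $(1, 0, -1, 0)$, which can be tabulated together with their minimal $K$-types using the \texttt{atlas} software. The verification is then that each such irreducible either equals $L(\pi(\chi),1)$ or has a minimal $K$-type outside the $\SO_3$-spherical cone. The splitting $\Theta(\chi) = \Theta(\rho(\chi)^+) \oplus \Theta(\rho(\chi)^-)$ in the quadratic case is read off from the action of the central subgroup $K'/Z_G$ on the relevant $K$-types, following the discussion after Corollary \ref{C:m_one}.

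The delicate point is $\chi = 1$, where one must distinguish $\rho(1)$ from $\epsilon$. Corollary \ref{C:m_two} applied to $(a,b,c) = (0,0,0)$ places the spherical type $V(0,0,0,0)$ in $\Theta(\rho(1))$ and not in $\Theta(\epsilon)$, so $\Theta(\rho(1))$ is spherical with the prescribed infinitesimal character and hence equals $L(\pi(1),1)$ by the argument above. For $\Theta(\epsilon)$, Corollary \ref{C:m_two} applied to $(2,2,2)$ gives that $V(2,2,2,0)$ appears in $\Theta(\epsilon)$ with multiplicity one, while the central-character obstruction excludes $L(\pi(1),1)$ as a subquotient. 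Comparing the resulting parity- and cone-constrained $K$-type spectrum against the \texttt{atlas} enumeration at infinitesimal character $(1,0,-1,0)$ should identify the unique candidate as the non-spherical summand $\Sigma$ of $\Ind_P^G(1)$ with minimal $K$-type $V(1,1,1,1)$, whose existence is already part of Proposition \ref{P:degenerate}. I expect the main obstacle to be precisely this final bookkeeping: confirming that $\Sigma$'s $K$-type multiplicities exactly match the pattern dictated by the refined version of Corollary \ref{C:m_one} for the sign central character, and that no other irreducible at this infinitesimal character is consistent with both the cone condition and the parity condition of Corollary \ref{C:m_two}.
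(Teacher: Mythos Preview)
Your strategy matches the paper's: the lower bound via the $A_5$-Levi, the upper bound via the $K$-type control of Corollaries \ref{C:m_one} and \ref{C:m_two}, and the reduction through Proposition \ref{P:types} to a single {\tt atlas} check at the real infinitesimal character $(1,1,0,0)$. The one place where your formulation needs correction is the content of that check.

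You propose to verify that every irreducible at $(1,1,0,0)$ other than $L(\pi(\chi),1)$ has its \emph{minimal} $K$-type outside the $\SO_3$-spherical cone. This is false: $\Sigma$ itself has minimal type $V(1,1,1,1)$, and the limits of discrete series at this infinitesimal character have minimal types such as $V(3,1,1,1)$ or $V(2,2,2,0)$, all of which satisfy the four triangle inequalities and hence lie inside the cone. So your dichotomy ``no $\SO_3$-invariants, or a saturated multiplicity-one type'' is not exhausted by looking only at minimal $K$-types. The paper's check is different and is what actually works: every irreducible at $(1,1,0,0)$ contains \emph{some} $K$-type $V(a,b,c,d)$ with $abcd=0$. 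Such a type has $\SO_3$-fixed space of dimension at most one, so by Corollary \ref{C:m_one} it occurs in $\Theta(\chi)$ with multiplicity $0$ or $1$; one then matches these multiplicities against those in $L(\pi(\chi),1)$ for part (1), or against $L(\pi(1),1)$ and $\Sigma$ separately using the parity rule of Corollary \ref{C:m_two} for part (3), and finds no room for any further subquotient. With this refinement of the verification step, your argument goes through and agrees with the paper.
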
 
\begin{proof} We start with (3). Using the {\tt atlas} we can list all irreducible representations with the infinitesimal character 
$(1,1,0,0)$ and trivial central character, and check that every irreducible representation of infinitesimal character  $(1,1,0,0)$ has a type $V(a,b,c,d)$ such that $abcd=0$. 
These types appear in $\Theta(\rho(1))$ and $\Theta(\epsilon)$ with multiplicity 0 or 1, as given by Corollary \ref{C:m_two}. Moreover, these multiplicities coincide with the 
multiplicities in $L(\pi(1),1)$ and $\Sigma$, respectively.  Hence $\Theta(\rho(1))\cong L(\pi(1),1)$ and $\Theta(\epsilon)\cong \Sigma$, since there is no room for anything else. 
(2) is a similar explicit check. For (1) we know that $\Theta(\rho(\chi)) \supseteq L(\pi(\chi),1)$ and that $L(\pi(\chi),1)$ accounts for all types 
$V(a,b,c,d)$ such that $abcd=0$. But, by Proposition \ref{P:types}, any irreducible representation with the same 
infinitesimal character has a type $V(a,b,c,d)$ such that $abcd=0$ (since it is true for $\Re(\lambda)=(1,1,0,0))$, hence $\Theta(\rho(\chi)) = L(\pi(\chi),1)$.   
\end{proof} 

\subsection{Non-split case}

Assume that $E=\mathbb R\times \mathbb C$, 
 In this case $T^{\circ}_{E,K}\cong  (\mathbb R^{\times} \times \mathbb C^{\times} )/\Delta(\mathbb R^{\times})$ and a character 
$\chi$ is a pair $(\chi_{\mathbb R},\chi_{\mathbb C})$ of characters of $\mathbb R^{\times}$  and $\mathbb C^{\times}$ 
such that $\chi_{\mathbb R} \cdot \chi_{\mathbb C} =1$ on $\Delta(\mathbb R^{\times})$. Let
$\nu_1\in \mathbb C$ be the differential of $\chi_{\mathbb R}$. The differential of $\chi_{\mathbb C}$ is a pair of complex numbers 
$(\nu_2,\nu_3)$  such that $\nu_2-\nu_3=m\in \mathbb Z$. Then $(\nu_1,\nu_2,\nu_3)$, $\nu_1 + \nu_2 + \nu_3=0$, 
is the infinitesimal character of $\chi$ and the lift $\Theta(\chi)$ has the infinitesimal character 
\[
 \beta + \frac{\nu_1}{2} \alpha_1 + \frac{\nu_2}{2} \alpha_2 + \frac{\nu_3}{2} \alpha_3. 
\]
 The character $\chi$ is unitary if and only if  $\nu_1$ is purely imaginary,

\smallskip

The group $G=G_E$ is quasi-split. 
Fix a maximal compact subgroup $K$ of $G$. It is isomorphic  $(\Sp(2) \times \SU_2 )/\mu_2$. Fix such an isomorphism. Then
any irreducible representation of $K$ is isomorphic to  $V_{(x,y)} \otimes V_z$ where $x+y+z$ is even. 
The center  $Z_K$ of $K$ is $\mu_2^2/\mu_2$. 
Let $K_J \supseteq K$ be a maximal compact subgroup of $H_J$, and fix an isomorphism 
 $K_J\cong \Sp(4)/\mu_2 \times \mathbb Z/2\mathbb Z$.  Up to $\Sp(4)$-conjugation, the embedding of $K$ into $K_J$ is given by a sequence 
 \[ 
 \Sp(2) \times \SU_2 \subseteq  \Sp(2) \times (\Sp(1) \times \Sp(1) )\subseteq \Sp(4) 
 \] 
 where $\SU_2 \cong \Sp(1)$ is embedded diagonally into $\Sp(1) \times \Sp(1)$. 
 An easy consequence of Lemma \ref{L:branching_1} is that $K$ fixes a line $\mathfrak a$ in $\mathfrak p_J$. Let $A=\exp(\mathfrak a)$. 
 The centralizer of $K$ in $K_J$ is (isomorphic to) $\O_2 \times \mathbb Z/2\mathbb Z$. Let 
 $K'\cong \O_2$ be a subgroup consisting of pairs $(g, \epsilon)$ such that $\det(g)=\epsilon$. Then 
 $T_{E,K}=K' A$, while $T^{\circ}_{E,K}=\SO_2 A$. 
 
 \smallskip 
 Representations of $G$ appearing in the correspondence are 
constituents of the degenerate principal series, obtained by inducing one-dimensional representations of a parabolic group 
$P=MA_PN$ of $G$ such that the derived group of $M$ is a simply connected group corresponding to the $A_2$ root subsystem. 
In particular, we can identify this group with $\SL_3(\mathbb R)$. It is not difficult to check that $M\cong \SL_3(\mathbb R) \cdot \SO_2$. 
There exists an isomorphism $i: \SO_2 A_P \rightarrow \SO_2A$ such that the following holds, as in Proposition \ref{P:degenerate}:

\begin{prop} \label{P:degenerate2} 
Let $\chi$ be a unitary character of $\SO_2A$ and let $\chi_P$ be the character of $\SO_2 A_P$ obtained by pulling back $\chi$ by $i$.  Extend $\chi_P$ to a character of 
$MA_P$ trivial on $\SL_3(\mathbb R)$. 
Let $\Ind_P^G(\chi_P)$ be the unitarizable degenerate principal series representation obtained by inducing $\chi_P$. 
\begin{itemize} 
\item If $\chi\neq 1$,  then $\Ind_P^G(\chi_P) \cong L(\pi(\chi), 1)$. 
\item If $\chi=1$, then $\Ind_P^G(1)\cong L(\rho(1),1) \oplus \Sigma$ where $\Sigma$ is irreducible with the minimal $K$-type $V_{(2,0)} \otimes V_0$. 
\end{itemize} 
\end{prop}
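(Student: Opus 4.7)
The plan is to mirror the proof of Proposition \ref{P:degenerate}. First, I would invoke Bruhat's classical theorem \cite{B}: since $P$ is maximal with one-dimensional split center $A_P$, the representation $\Ind_P^G(\chi_P)$ is irreducible unless $\chi_P$ is quadratic, in which case it decomposes into at most two summands. Because $\SO_2$ has no non-trivial quadratic characters, the only unitary quadratic characters of $\SO_2 A_P$ are $\chi_P = 1$ and the sign twist $\omega$ on the $A_P$ factor.

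Second, for $\chi \neq 1$, I would identify $\Ind_P^G(\chi_P) \cong L(\pi(\chi), 1)$ via induction in stages through the standard maximal parabolic $P_E = M_E N_E$ with Levi $\GL_2(\mathbb R \times \mathbb C)^{\det}$. Under the isomorphism $i : \SO_2 A_P \to \SO_2 A$, the character $\chi_P$ extended trivially across the $\SL_3(\mathbb R)$ factor matches the tempered inducing datum $\pi(\chi)$ of Section \ref{packets} twisted by $|\det|^1$, whose standard module has unique irreducible Langlands quotient $L(\pi(\chi),1)$. In the non-quadratic case Bruhat gives irreducibility directly; in the quadratic case $\chi = \omega$, I would verify invertibility of the standard long intertwining operator at this parameter (or run a targeted atlas check) to rule out a second summand.

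Third, for $\chi = 1$, the same induction in stages produces $L(\rho(1), 1)$ as a constituent of $\Ind_P^G(1)$, and Bruhat allows length two, so the complement $\Sigma$ is irreducible and occurs with multiplicity one. To pin down its minimal $K$-type as $V_{(2,0)} \otimes V_0$, I would combine Frobenius reciprocity for the $K$-types of $\Ind_P^G(1)$ with the branching rules (Lemmas \ref{L:branching_1} and \ref{L:branching_2}) to enumerate the first few $K$-types, then appeal to the atlas software at the infinitesimal character $(1,1,0,0)$ (obtained from Lemma \ref{L:infinitesimal} with $(a,b,c) = (0,0,0)$) and trivial central character to confirm that the unique non-spherical representation at this parameter carries precisely the type $V_{(2,0)} \otimes V_0$.

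The main obstacle, exactly as in the split case, will be the $\chi = 1$ branch: confirming via atlas that there are exactly two irreducible representations of $G_E$ at infinitesimal character $(1,1,0,0)$ with trivial central character and matching the claimed minimal types. A secondary technical point is the handling of the non-trivial quadratic character $\chi = \omega$, where Bruhat alone is not sufficient; here an intertwining operator computation, or again a targeted atlas check, is needed to confirm that $\Ind_P^G(\omega)$ remains irreducible and equals $L(\pi(\omega), 1)$.
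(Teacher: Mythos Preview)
Your approach mirrors the paper's, which simply points back to the proof of Proposition~\ref{P:degenerate} with the words ``as in Proposition~\ref{P:degenerate}''. Two points deserve correction, however.

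First, your ``secondary technical point'' about a non-trivial quadratic character $\omega$ is spurious. In this case $T^{\circ}_{E,K}\cong (\mathbb R^{\times}\times\mathbb C^{\times})/\Delta(\mathbb R^{\times})\cong\mathbb C^{\times}$ is \emph{connected}, and the isomorphism $i$ identifies $\SO_2 A_P$ with it; in particular $A_P\cong\mathbb R_{>0}$ carries no sign character. Hence the only unitary quadratic character is the trivial one, exactly as the paper notes in Section~3.1 (``non-trivial quadratic characters $\chi$ appear only in the split case''). So Bruhat already gives irreducibility for every $\chi\neq 1$, and no extra intertwining-operator or {\tt atlas} check is needed there.

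Second, your proposed identification of $\Ind_P^G(\chi_P)$ with $L(\pi(\chi),1)$ ``via induction in stages through $P_E$'' is not as direct as you suggest: $P$ (Levi of type $A_2$) and $P_E$ (Levi of type $A_1^3$) are both maximal parabolics of $G_E$ and neither contains the other. One must pass through a common smaller parabolic, or---closer to what the paper does in the split case---simply check that the submodule of $\Ind_P^G(\chi_P)$ generated by its minimal $K$-type has the Langlands data of $L(\pi(\chi),1)$. For the $\chi=1$ case your plan to consult {\tt atlas} at infinitesimal character $(1,1,0,0)$ is exactly what the paper does (Section~\ref{SS:quasi_split}); note that for the $K$-type count the relevant branching is from $\Sp(2)$ to $\Sp(1)\times\SO_2$ (Lemma~\ref{L:branching_3}), not Lemmas~\ref{L:branching_1}--\ref{L:branching_2}.
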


Let $K_M\cong \SO_3 \cdot \SO_2$ be a maximal compact subgroup of $M$. Pick $K\supset K_M$ and fix an isomorphism $K\cong (\Sp(2) \times \SU_2) /\mu_2$. 
Now the embedding of $K_M$ into $K$ is described as follows. The factor  $\SO_3\cong \SU_2/\mu_2$  maps into $(\Sp(2) \times \SU_2 )/\mu_2$ by a composite of embeddings 
   \[ 
  \SU_2 \subseteq  \SU_2 \times \SU_2 \times \SU_2\subseteq \Sp(2) \times \SU_2,
 \] 
 where the first embedding is diagonal. The group $\SO_2$ is contained in the centralizer $(= \O_2)$ of $\Sp(1)$ embedded diagonally into $\Sp(2)$.
  Consider the degenerate principal series $\Ind_P^G(\chi_P)$. 
 Let $\chi_m$ be the restriction of $\chi_P$ to character to $\SO_2$, where $m\in \mathbb Z$ and we have identified the dual of $\SO_2$ with 
 $\mathbb Z$. By the Frobenius reciprocity, the multiplicity of the $K$-type $V_{(x,y)} \otimes V_z$ in the degenerate principal series is the same as the multiplicity of 
  the $\Sp(1) \times \SO_2$-module  $V_z \otimes \chi_m$ in the $\Sp(2)$-module $V_{(x,y)}$.  If we ignore the factor $\SO_2$, a type $V_{(x,y)}\otimes V_z$ 
  appears only if it has non-zero $\SO_3$-fixed vectors, and this may happen only if 
  \[ 
  x+ y \geq z. 
  \] 

  \smallskip 
 Let $\chi$ be a character of $T_{E,K}^{\circ}=\SO_2A$ and let $\chi_m$ be the restriction of $\chi$ to $\SO_2$. Let $V[m]$ be the summand of the minimal 
 representation on which this $\SO_2$ acts by $\chi_m$.  Let $\Theta(\chi)$ be the full lift of $\chi$ to $G$. 
 Then $\Theta(\chi)$ is a quotient of $V[m]$. We shall prove that the $K$-types and their multiplicities of 
$\Theta(\chi)$ are less than or equal to those of the degenerate principal series, as just discussed. The key is the following two lemmas.

\begin{lemma} \label{L:bounded_1}  Let $d$ be the multiplicity of the $\Sp(1) \times \SO_2$-module $V_z \boxtimes \chi_m$ in the 
$\Sp(2)$-module $V_{(x,y)}$.  Then
\[ 
 \Hom_K(V_{(x,y)} \otimes V_z, V_{i\omega_4} [m]) 
 \]
 is trivial if $i<x$ and equal to $d$ if $i\geq x$. 
\end{lemma}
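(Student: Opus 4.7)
The plan is to restrict $V_{i\omega_4}$ step by step along the chain of subgroups
\[
\Sp(4) \;\supset\; \Sp(2) \times \Sp(2) \;\supset\; \Sp(2) \times \bigl(\Sp(1)\times \Sp(1)\bigr),
\]
tracking how $K \cong (\Sp(2)\times \SU_2)/\mu_2$ and the commuting $\SO_2$ are embedded. Recall that, in the description preceding the lemma, the $\Sp(2)$ factor of $K$ is the \emph{first} factor of $\Sp(2)\times \Sp(2)\subset \Sp(4)$; the $\SU_2\cong \Sp(1)$ factor of $K$ is the diagonal $\Sp(1)\hookrightarrow \Sp(1)\times \Sp(1)$ sitting inside the \emph{second} $\Sp(2)$; and $\SO_2 \subset K'$ is the centralizer of that diagonal $\Sp(1)$ in the second $\Sp(2)$, arising from the classical dual pair $\Sp(1)\times \O_2 \subset \Sp(2)$.

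First I would apply Lemma \ref{L:branching_1} to write
\[
V_{i\omega_4}\big|_{\Sp(2)\times \Sp(2)} \;=\; \bigoplus_{\mu=(x_\mu,y_\mu)\,:\, x_\mu \le i} V_\mu \boxtimes V_\mu,
\]
with multiplicity one. For the first $\Sp(2)$ factor to contain $V_{(x,y)}$ we must take $\mu = (x,y)$, which forces $i \ge x$; this gives the vanishing statement when $i<x$.

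When $i \ge x$ we get exactly one copy of $V_{(x,y)}$ on the second $\Sp(2)$ factor, which commutes with the first. Restricting this $V_{(x,y)}$ further to the commuting pair $\SU_2 \times \SO_2$ inside the second $\Sp(2)$ and picking out the joint $V_z \boxtimes \chi_m$-isotypic component yields, by the very definition of $d$, a space of dimension $d$. Taking the $\SO_2$-isotypic component $V_{i\omega_4}[m]$ and then the $\Sp(2)\times \SU_2$-isotypic component $V_{(x,y)}\otimes V_z$ on the resulting module therefore produces a space of dimension $d$, which is the asserted value of $\dim \Hom_K(V_{(x,y)}\otimes V_z, V_{i\omega_4}[m])$.

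There is no real obstacle here; the result is a direct two-step branching, made clean by the multiplicity-freeness in Lemma \ref{L:branching_1}. The only point requiring care is to correctly identify which $\Sp(2)$ factor contains the $K$-type $V_{(x,y)}$ and which contains the diagonal $\SU_2$ and its centralizing $\SO_2$, so that the two factors in $V_\mu \boxtimes V_\mu$ are genuinely decoupled and the multiplicity factors as claimed.
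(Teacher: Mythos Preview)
Your argument is correct and is exactly the unpacking of the paper's one-line proof ``This follows from Lemma~\ref{L:branching_1}'': you apply that branching from $\Sp(4)$ to $\Sp(2)\times\Sp(2)$, pick off the unique summand $V_{(x,y)}\boxtimes V_{(x,y)}$ when $i\ge x$, and then read off the multiplicity $d$ on the second factor via the $\Sp(1)\times\SO_2$ decomposition. There is nothing to add.
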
 
\begin{proof} This follows from Lemma \ref{L:branching_1}. 
\end{proof}

\begin{lemma} \label{L:linear} 
 Let $X$ be a torsion free $\mathbb C[x]$-module. Assume that $X$ is a union finite dimensional spaces $X_1 \subseteq X_2 \subseteq \ldots$ such that 
\begin{itemize} 
\item $x \cdot X_n \subset X_{n+1}$ for all $n$. 
\item $\dim(X_{n+1}/X_n) = d$ for all sufficiently large $n$. 
\end{itemize} 
Then for every $\nu\in \mathbb C$, $\dim(X/(x-\nu)X) \leq d$. 
\end{lemma}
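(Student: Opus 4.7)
The plan is to use the torsion-freeness hypothesis to convert the growth rate $d$ of $\dim X_n$ into an upper bound on $\dim X/(x-\nu)X$. The key point is that multiplication by $x-\nu$ on $X$ is injective (torsion-freeness), hence preserves dimensions of finite-dimensional subspaces, while the filtration allows us to compare $(x-\nu)X_n$ with $X_{n+1}$ quantitatively.

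Concretely, I would argue as follows. Since $x \cdot X_n \subseteq X_{n+1}$ and $\nu X_n \subseteq X_n \subseteq X_{n+1}$, we have $(x-\nu)X_n \subseteq X_{n+1}$, and by torsion-freeness $\dim (x-\nu) X_n = \dim X_n$. Therefore
\[
\dim \bigl(X_{n+1} / (x-\nu) X_n\bigr) \;=\; \dim X_{n+1} - \dim X_n \;=\; d
\]
for all sufficiently large $n$. Since $(x-\nu)X_n \subseteq (x-\nu)X \cap X_{n+1}$, it follows that
\[
\dim \bigl(X_{n+1} / ((x-\nu)X \cap X_{n+1})\bigr) \;\leq\; d
\]
for all sufficiently large $n$.

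To conclude, I would examine the natural map $X_{n+1} \to X/(x-\nu)X$. Its kernel is exactly $(x-\nu)X \cap X_{n+1}$, so it induces an injection
\[
X_{n+1}/((x-\nu)X \cap X_{n+1}) \;\hookrightarrow\; X/(x-\nu)X.
\]
As $n$ grows these subspaces are nested (if $m \geq n$, an element of $X_{n+1}$ lying in $(x-\nu)X \cap X_{m+1}$ automatically lies in $(x-\nu)X \cap X_{n+1}$), and since $X = \bigcup_n X_n$, their union is all of $X/(x-\nu)X$. Because each of these nested subspaces has dimension at most $d$, the union does as well, giving $\dim X/(x-\nu)X \leq d$.

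There is no serious obstacle here; the argument is a routine piece of linear algebra. The only point requiring care is the verification that $X_{n+1}/((x-\nu)X \cap X_{n+1})$ embeds into $X/(x-\nu)X$ and that these embeddings are compatible as $n$ varies, so that the bound transfers from the finite-dimensional quotients to the global one.
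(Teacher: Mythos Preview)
Your argument is correct and is in fact more direct than the paper's. The paper argues by contradiction: assuming $\dim X/(x-\nu)X > d$, it chooses $d+1$ independent functionals on $X/(x-\nu)X$, restricts them to a finitely generated submodule $F$ on which they remain independent, and then invokes the structure theorem over the PID $\mathbb C[x]$ to conclude that $F$ is free of rank $r>d$. Comparing the growth of $\dim(F\cap X_{n+k})\geq rn$ against $\dim X_{n+k}=dn+c$ yields a contradiction. Your route avoids the structure theorem entirely: you simply observe that $(x-\nu)$ is injective, so $\dim X_{n+1}/(x-\nu)X_n = d$ for large $n$, and then identify $X/(x-\nu)X$ as the increasing union of the images of the $X_{n+1}$, each of dimension at most $d$. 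This is more elementary and arguably cleaner; the paper's argument, on the other hand, makes the role of the rank of a free submodule explicit, which connects more visibly to how the lemma is applied (bounding multiplicities via free $U(\mathfrak a)$-modules).
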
 
\begin{proof} Assume otherwise, and pick $d+1$ linearly independent functionals on $X$ vanishing on $(x-\nu)X$.  Pick a finitely generated submodule $F$ 
such that the restrictions to $F$ of these linear functionals are linearly independent. Hence $\dim(F/(x-\nu)F) > d$. On the other hand, since $F$ is  
finitely generated, it is free, hence $F\cong \mathbb C[x]^r$ for some $r$, and $r>d$.  Let $k$ be such that $X_k$ contains all generators of  $F$.  
Let $F_n = F \cap X_{n+k}$. Then $\dim F_n \geq r n$. On the other hand, $\dim X_{n+k} =dn +c$ for some constant $c$, independent of $n$.  This contradicts 
that $F_n\subseteq X_{n+k}$ for all $n$ 
\end{proof}

   \smallskip 
   %Since $T_{E,K}^{\circ} \cong \mathbb C^{\times}$ is connected, any character $\chi$ is determined by the restriction $\chi_m$ to the maximal  compact subgroup and the differential of the restriction to $A$.  

   \begin{cor}  \label{C:m_one_2} Let $\chi$ be a character $T_{E,K}^{\circ}$. 
   The multiplicities of $K$-types in $\Theta(\chi)$ are less than or equal to those of the degenerate principal series  $\Ind_P^G(\chi_P)$. The equality holds for types of multiplicity one. 
    \end{cor}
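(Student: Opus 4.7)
The plan is to mirror the split-case Corollary \ref{C:m_one}, substituting the one-dimensional $\mathfrak a$ of this setup for the two-dimensional $\mathfrak a$ of the split case, and using the weaker upper bound from Lemma \ref{L:linear} in place of the freeness statement of Proposition \ref{P:free}. Fix an irreducible $K$-type $V_{(x,y)} \otimes V_z$, and let $d = \dim \Hom_{\Sp(1) \times \SO_2}(V_z \boxtimes \chi_m, V_{(x,y)})$; by Frobenius reciprocity this is precisely the multiplicity of $V_{(x,y)} \otimes V_z$ in $\Ind_P^G(\chi_P)$, so the task is to show $\dim \Hom_K(V_{(x,y)} \otimes V_z, \Theta(\chi)) \leq d$, with equality when $d = 1$.

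First, I would set $X = \Hom_K(V_{(x,y)} \otimes V_z, V[m])$ and view it as a module over $U(\mathfrak a) \cong \mathbb C[a]$ for a chosen generator $a \in \mathfrak a$, filtered by $Y_n = \Hom_K(V_{(x,y)} \otimes V_z, \oplus_{i \leq n} V_{i\omega_4}[m])$. Because $\mathfrak a \subset \mathfrak p_J$ and $\mathfrak p_J \cdot V_{i\omega_4} \subseteq V_{(i-1)\omega_4} \oplus V_{i\omega_4} \oplus V_{(i+1)\omega_4}$, the inclusion $a \cdot Y_n \subseteq Y_{n+1}$ holds, and Lemma \ref{L:bounded_1} gives $\dim(Y_{n+1}/Y_n) = d$ for $n \geq x$. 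The matrix-coefficient argument proving Lemma \ref{L:key} applies verbatim to the unitary subspace $V[m]$, so $V[m]$, and hence $X$, is torsion-free over $\mathbb C[a]$. Lemma \ref{L:linear} then yields $\dim X/(a - \nu)X \leq d$. To transfer this to a bound on $\Hom_K$ into $\Theta(\chi)$, I would use that $K$ commutes with $A$: the $V_{(x,y)} \otimes V_z$-isotypic component of $V[m]$ is canonically $(V_{(x,y)} \otimes V_z) \otimes X$, with $a$ acting on the $X$ factor, and $\Theta(\chi)$ (identified with $\Theta(\rho(\chi))$, or the appropriate sum over irreducible extensions to $T_{E,K}$ when $\rho(\chi)$ is reducible) is the maximal quotient of $V[m]$ on which $a$ acts by $\nu$. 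Its $V_{(x,y)} \otimes V_z$-isotypic is then $(V_{(x,y)} \otimes V_z) \otimes X/(a-\nu)X$, yielding the upper bound.

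For equality when $d = 1$, I would invoke the lower bound $L(\pi(\chi), 1) \hookrightarrow \Theta(\chi)$ (the subquotient statement from the remark following Theorem \ref{T:main_2}) together with the identification $L(\pi(\chi), 1) = \Ind_P^G(\chi_P)$ supplied by Proposition \ref{P:degenerate2} in the case $\chi \neq 1$; the multiplicity-one type is then realized in $\Theta(\chi)$ and matches the upper bound. The case $\chi = 1$ is absorbed by summing both irreducible extensions to $T_{E,K}$, since $\Theta(\rho(1)) \oplus \Theta(\epsilon)$ contains $L(\rho(1), 1) \oplus \Sigma = \Ind_P^G(1)$. The main technical obstacle is the identification of $\Theta(\chi)$ with the $\mathfrak a$-coinvariant quotient of $V[m]$ (as opposed to a strictly smaller quotient), which rests on a careful reading of the universal property defining $\Theta$; once this identification is in hand, the rest of the argument is a formal consequence of Lemma \ref{L:linear} and the previously established lemmas.
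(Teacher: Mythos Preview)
Your upper-bound argument is essentially the paper's: define $X$, filter it by the $V_{i\omega_4}[m]$, invoke torsion-freeness (Lemma~\ref{L:key}) and the dimension count of Lemma~\ref{L:bounded_1}, then apply Lemma~\ref{L:linear}. That part is fine.

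The gap is in your treatment of the equality case $d=1$. You appeal to the subquotient $L(\pi(\chi),1)\subseteq\Theta(\chi)$ from the remark after Theorem~\ref{T:main_2}, together with Proposition~\ref{P:degenerate2}. For $\chi\neq 1$ this does give a lower bound, but for $\chi=1$ you assert that $\Theta(\rho(1))\oplus\Theta(\epsilon)$ contains $L(\rho(1),1)\oplus\Sigma=\Ind_P^G(1)$, and this is not available: the remark explicitly says the subquotient strategy \emph{fails} for $\rho=\epsilon$, so at this stage you only know $\Theta(\rho(1))\supseteq L(\rho(1),1)$, not $\Theta(\epsilon)\supseteq\Sigma$. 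A multiplicity-one type lying in $\Sigma$ rather than in $L(\rho(1),1)$ would not be accounted for by your lower bound. Since the corollary is later used (via the cone constraint and the multiplicity-one types) precisely to pin down $\Theta(\epsilon)$, your argument is circular at $\chi=1$.

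The paper avoids this entirely by a direct module-theoretic argument, exactly parallel to Proposition~\ref{P:free} which you cite as your template but then abandon in the equality step. When $d=1$, Lemma~\ref{L:bounded_1} says $\dim Y_n=0$ for $n<x$ and $\dim(Y_{n+1}/Y_n)=1$ for $n\geq x$. Pick $0\neq v\in Y_x$; torsion-freeness forces $v,av,a^2v,\ldots$ to be independent, and the dimension count shows they span $X$. Hence $X\cong\mathbb C[a]$ is free of rank one, so $\dim X/(a-\nu)X=1$ for every $\nu$. This gives the exact multiplicity without any appeal to subquotients of $\Theta(\chi)$, and works uniformly in $\chi$.
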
 
   \begin{proof} Assume that the restriction of $\chi$ to the maximal compact subgroup is $\chi_m$. 
    Recall that the multiplicity $d$ of $V_{(x,y)} \otimes V_z,$ in $\Ind_P^G(\chi_P)$  is equal to the 
   multiplicity $d$ of the $\Sp(1) \times \SO_2$-module $V_z \boxtimes \chi_m$ in the $\Sp(2)$-module $V_{(x,y)}$. On the other hand, 
   the multiplicity of $V_{(x,y)} \otimes V_z,$ in $\Theta(\chi)$ is equal to $X/(x-\nu)X$ where 
\[ 
X=\oplus_{i=0}^{\infty}  \Hom_K(V_{(x,y)} \otimes V_z, V_{i\omega_4} [m]) 
\] 
and $\nu=d\chi(x)$, $U(\mathfrak a ) \cong \mathbb C[x]$.  It follows from Lemma   \ref{L:linear} (its conditions are satisfied by Lemma \ref{L:bounded_1}) 
that the multiplicity is bounded by $d$.  If $d=1$, then $X$ is clearly a free 
$U(\mathfrak a)$-module of rank one, so the type appears in $\Theta(\chi)$ with multiplicity one.  
\end{proof} 
 
 \smallskip 
\begin{thm}  Assume $E=\mathbb R\times\mathbb C$ and $J=M_3(\mathbb R)$, so $T^{\circ}_{E,K}\cong \mathbb C^{\times}$. 
Let $\chi$ be a unitary character of $T^{\circ}_{E,K}$. 
\begin{enumerate} 
\item If $\chi\neq 1$ then $\Theta(\rho(\chi)) \cong L(\pi(\chi),1)$. 
\item $\Theta(\rho(1))\cong L(\pi(1),1)$ and $\Theta(\epsilon)\cong \Sigma$,  irreducible with the minimal $K$-type $V_{(2,0)}\otimes V_0$. 
\end{enumerate} 

\end{thm}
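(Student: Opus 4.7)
The proof plan parallels the split case, sandwiching $\Theta(\rho(\chi))$ between a lower bound coming from the Levi dual pair and an upper bound on $K$-type multiplicities extracted from a degenerate principal series.

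Three ingredients drive the argument. First, for every unitary character $\chi$ of $T^{\circ}_{E,K}$, the remark after Theorem \ref{T:main_2} furnishes $L(\pi(\chi),1)$ as a subquotient of $\Theta(\rho(\chi))$; this lower bound is unavailable when $\rho=\epsilon$. Second, Corollary \ref{C:m_one_2} bounds the $K$-type multiplicities of $\Theta(\chi)$ by those of $\Ind_P^G(\chi_P)$, with equality forced whenever the bound is $1$. Third, Proposition \ref{P:degenerate2} identifies $\Ind_P^G(\chi_P)$ as the irreducible $L(\pi(\chi),1)$ when $\chi\neq 1$, and as $L(\pi(1),1)\oplus\Sigma$ when $\chi=1$.

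For part (1), assume $\chi\neq 1$. In this torus no quadratic characters exist besides $\chi=1$, so $\Theta(\chi)=\Theta(\rho(\chi))$ (no further eigenspace decomposition). The irreducible $L(\pi(\chi),1)$ is simultaneously a subquotient of $\Theta(\rho(\chi))$ and, by Proposition \ref{P:degenerate2}, the sharp upper bound on its $K$-type multiplicities. Sandwiching the multiplicities between these two equal values forces $\Theta(\rho(\chi))\cong L(\pi(\chi),1)$.

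For part (2), decompose $\Theta(1)=\Theta(\rho(1))\oplus\Theta(\epsilon)$ under the $K'/\SO_2\cong\mathbb{Z}/2$ action and bound its $K$-types inside those of $L(\pi(1),1)\oplus\Sigma$ via Corollary \ref{C:m_one_2}. The Levi lower bound places $L(\pi(1),1)$ inside $\Theta(\rho(1))$. To separate the two summands, I would refine the upper bound, in the spirit of Corollary \ref{C:m_two}, by recording the sign with which $K'/\SO_2$ acts on each $\SO_2$-isotypic line in $V_{n\omega_4}$; this assigns every $K$-type of the principal series to either $\Theta(\rho(1))$ or $\Theta(\epsilon)$. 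To exclude extra subquotients and exhibit $\Sigma$ as a constituent of $\Theta(\epsilon)$, I would invoke the \texttt{atlas} software to enumerate all irreducibles of $G_E$ at infinitesimal character $(1,1,0,0)$ with the appropriate central characters and verify that $L(\pi(1),1)$ and $\Sigma$ are the only candidates compatible with the $K$-type constraints, thereby concluding $\Theta(\rho(1))\cong L(\pi(1),1)$ and $\Theta(\epsilon)\cong\Sigma$.

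The main obstacle is $\Theta(\epsilon)$: no Levi trick supplies a lower bound, so both its non-vanishing and its irreducibility must be extracted entirely from the refined $K$-type separation combined with the \texttt{atlas} enumeration at the relevant infinitesimal character.
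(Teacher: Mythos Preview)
Your proposal is correct and follows essentially the same route as the paper. For part (1) the arguments are identical; for part (2) the paper implements your ``sign refinement'' only on the single family of $K$-types $V_{(2n,0)}\otimes V_0$ (this is Lemma~\ref{L:m_two}), which already shows $\Theta(\rho(1))$ and $\Theta(\epsilon)$ are both nonzero with the former spherical, and then the {\tt atlas} list at infinitesimal character $(1,1,0,0)$ together with the cone constraint $x+y\geq z$ forces the conclusion exactly as you outline.
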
 
\begin{proof} The torus $T_{E,K}$ has only two one-dimensional representations, the trivial $\rho(1)$ and the sign $\epsilon$, and the family of 2-dimensional representations 
   $\rho(\chi)\cong\rho(\chi^{-1})$ parameterized by non-trivial characters $\chi$. Hence $\Theta(\rho(\chi))\cong \Theta(\chi)$ if $\chi\neq 1$ and 
   $\Theta(1)\cong \Theta(\rho(1))\oplus \Theta(\epsilon)$ otherwise. By Corollary \ref{C:m_one_2}, 
   if $\chi\neq 1$, the types of $\Theta(\rho(\chi))$ are 
   bounded by the types of $\Ind_P^G(\chi_P)\cong L(\pi(\chi),1)$. Since, 
   by the remark following Theorem \ref{T:main_2},   $L(\pi(\chi),1)$ is a subquotient of $\Theta(\chi)$ it follows that
   $\Theta(\rho(\chi))\cong L(\pi(\chi),1)$ for $\chi\neq 1$.  This takes care of (1), so we look at (2). We need the following 
   
  \begin{lemma} \label{L:m_two} The type $V_{(2n,0)}\otimes V_0$ appears  in $\Theta(\rho(1))$ if $n$ is even and 
in $\Theta(\epsilon)$ if $n$ is odd. The multiplicity is always 1. 
\end{lemma} 
\begin{proof} This type appears in  $V_{i\omega_4}[0]$ for $i=2n, 2n+1, \ldots $, with multiplicity one. 
Thus, as in the proof of Corollary \ref{C:m_two},  we need to figure out the action of the factor $\mathbb Z/2\mathbb Z$ on the type, for $i=2n$, to find out if it 
belongs to $\Theta(\rho(1))$ or $\Theta(\epsilon)$. This reduces to finding the sign of the action of $\O_2/\SO_2$ on 
$V_{(2n,0)}^{\Sp(1)\times \SO_2}$. But this can be easily computed by observing that $V_{(2n,0)}$ is $2n$-th symmetric power of $V_{(1,0)}$. 
The answer is $(-1)^n$. 
\end{proof} 

The lemma implies that $\Theta(\rho(1))$ and $\Theta(\epsilon)$  are non-zero, and the former is spherical. 
Here we have 11 representations with this infinitesimal character, see Section \ref{SS:quasi_split}. Only two, $L(\pi(1),1)$ and $\Sigma$ have the $K$-types 
in the cone $x+y \geq z$. Hence $\Theta(\rho(1)) \cong L(\pi(1),1)$ and  $\Theta(\epsilon)\cong \Sigma$, since there is no 
room for anything else. 
\end{proof}

\section{The correspondences in the case $J=H_3(\mathbb C)$} 

There are two cases here. If $E=\mathbb R^3$, then $T_{E,K}$ is compact. This case is treated in \cite{Lokethesis} and we shall include here that 
result in a suitable form. 

\subsection{Minimal representation} 
Let $K_J$ be a maximal compact subgroup of $H_J$. 
The  group $K_J$ is isomorphic to $\SU_2\times_{\mu_2}\SU_6 \rtimes \mathbb Z/2\mathbb Z$. More precisely, since we work with adjoint groups, the quotient 
$\SU_6/\mu_3$ appears instead of $\SU_6$, however for ease of notation, we shall often write $\SU_6$. 
The generator $\delta$ of $\mathbb Z/2\mathbb Z$ acts on $\SU_6$ as an outer 
automorphism.  Let $\omega_3$ be the third fundamental weight for $\SU_6$. Let $V_n$, as previously, denote the irreducible representation of $\SU_2$ of highest weight $n$. 
 Let $V$ be the $(\mathfrak h_J, K_J)$-module corresponding to the minimal representation of $G_J$. We have an isomorphism of $\SU_2\times \SU_6$-modules \cite{GrW},
\[ 
V= \oplus _{n=0}^{\infty} V_{n+2}\otimes V_{n\omega_3} . 
\] 
The element $\delta$ acts on each $V_{n\omega_3}$, and we now describe its action. 
The centralizer of $\delta$ in $\SU_6$ is $\Sp(3)$ so its eigenspaces in $V_{n\omega_3}$ are naturally $\Sp(3)$-modules. 
In fact, $\SU_2\times_{\mu_2} \Sp(3)$ is a maximal compact subgroup 
of the split Lie group of the type $F_4$, and understanding the eigenspaces  of $\delta$ in $V$ amounts restricting $V$ to $F_4$. This was done in 
\cite{LokeJFA} and, as a consequence, we have the following. 
The restriction of  $V_{n\omega_3}$ to $\Sp(3)$ is the multiplicity free sum of all irreducible representations $V_{(n,m,m)}$ with the highest 
weight $(n,m,m)$, where $n\geq m \geq 0$, and $\delta$ acts on $V_{(n,m,m)}$ by $(-1)^{n-m}$. 

\subsection{Split case} 
Assume that $E=\mathbb R^3$. 
 In this case $T^{\circ}_{E,K}\cong \mathbb T^3/\Delta\mathbb T$, and a character $\chi$ is a given triple of integers $(\nu_1, \nu_2, \nu_3)$ 
 such that $\nu_1 + \nu_2 + \nu_3=0$. This triple is the infinitesimal character of $\chi$.  
 Thus the lift $\Theta(\chi)$ has the infinitesimal character 
\[
 \beta + \frac{\nu_1}{2} \alpha_1 + \frac{\nu_2}{2} \alpha_2 + \frac{\nu_3}{2} \alpha_3. 
\]
 
\smallskip 

The group $G=G_E$ is split. 
Let $K$ be the maximal compact subgroup of $G$. Recall that $K\cong \SU_2^4/\mu_2$.  
We have 4 embeddings $\mathbb R^3 \rightarrow H_3(\mathbb C)$, which give rise to 
4 non-conjugated embeddings of $G$ into $H _J$. On the level of maximal compact groups the 4 different embeddings are easily distinguished. We have 
4 embeddings $K\rightarrow K_J$ each obtained by picking one $\SU_2$-factor of $K$ and  mapping it to the unique $\SU_2$ factor of $K_J$. The other three $\SU_2$ are 
mapped into $\SU_6$, uniquely up to conjugation in $\SU_6$. So we fix one embedding of $K$ into $K_J$ and assume, without loss of generality, that the first $\SU_2$ factor 
of $K$ maps into the $\SU_2$-factor of $K_J$. We note that $V$ is $\SU_2$-admissible, for this $\SU_2$. Hence the restriction of $V$ to $G$ is also admissible, and this 
dual pair correspondence was determined in \cite{Lokethesis}. 
 Each $\Theta(\rho)$ is an irreducible quaternionic representation, which can be described as a derived functor module. 
 Here we give a reinterpretation of that result in terms of Langlands parameters. 
 
 \begin{thm}  Assume $E=\mathbb R^3$ and $J=H_3(\mathbb C)$, so $T^{\circ}_{E,K} \cong \mathbb T^3/\Delta\mathbb T$. 
 Let $\chi$ be a character of $T^{\circ}_{E,K}$, and 
 $P(E,K,\chi)$ be the packet of 4 representations of the Levi $M_E$ as in Section \ref{packets}. 
\begin{enumerate} 
\item For every $\chi$, $\Theta(\rho(\chi)) \cong L(\pi,1)$, for $\pi \in P(E,K,\chi)$. 
\item $\Theta(\rho(1))$ and $\Theta(\epsilon)$ is irreducible representations of $G_E$ with the infinitesimal character $(1,1,0,0)$ and minimal $K$-types $V(2,0,0,0)$ and 
$V(4,0,0,0)$. 
\end{enumerate} 

\end{thm}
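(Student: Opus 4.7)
The plan is to recast the derived-functor analysis of \cite{Lokethesis} in terms of Langlands parameters, using Lemma~\ref{L:infinitesimal} to compute the infinitesimal character. For each of the 4 embeddings $\mathbb{R}^3 \to H_3(\mathbb{C})$, exactly one $\SU_2$-factor of $K \cong \SU_2^4/\mu_2$ is identified with the $\SU_2$-factor of $K_J$, so the decomposition $V = \bigoplus_{n \geq 0} V_{n+2} \otimes V_{n\omega_3}$ makes $V$ an $\SU_2$-admissible, hence $G_E$-admissible, module. Consequently the restriction problem to $G_E$ reduces to branching $V_{n\omega_3}$ under the remaining $\SU_2^3 \subset \SU_6$ (combined with the action of $\delta$ on $V_{n\omega_3}$), which is exactly the data underpinning the analysis of \cite{Lokethesis}. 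Invoking that result, each $\Theta(\rho)$ is nonzero and irreducible, realized as a cohomologically induced derived functor module with an explicitly computed minimal $K$-type.

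For item (2), Lemma~\ref{L:infinitesimal} with $(\nu_1, \nu_2, \nu_3) = 0$ yields the infinitesimal character $\tfrac12(2, 2, 0, 0) = (1, 1, 0, 0)$ for both $\Theta(\rho(1))$ and $\Theta(\epsilon)$. The minimal $K$-types $V(2, 0, 0, 0)$ and $V(4, 0, 0, 0)$ are then read off from the lowest nonzero summand of $V$ carrying the corresponding eigencharacter of $T_{E, K}$: for $\rho(1)$, this is the summand $V_2 \otimes V_0$ (the $n = 0$ term, on which $\delta$ acts trivially), and for $\epsilon$ one needs the lowest $n$ with a $\delta$-odd contribution, so $n = 1$ and the term $V_3 \otimes V_{\omega_3}$, which further branches to yield minimal $K$-type $V(4, 0, 0, 0)$ after identifying the appropriate $\SU_2^3 \subset \SU_6$ component. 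For item (1), Lemma~\ref{L:infinitesimal} with integer triple $(n_1, n_2, n_3)$ gives infinitesimal character $\tfrac12(n_1 + 2, -n_1 + 2, n_2 + n_3, n_2 - n_3)$, which by direct computation matches the infinitesimal character of $L(\pi, 1)$ for any constituent $\pi$ of the restriction of $(\nu^{n_1} \times \omega) \otimes (\nu^{n_2} \times \omega) \otimes (\nu^{n_3} \times \omega)$ from $\GL_2(\mathbb{R}^3)$ to $\GL_2(\mathbb{R}^3)^{\det}$. Matching the minimal $K$-type of the derived functor module with that of $L(\pi, 1)$ then forces $\Theta(\rho(\chi)) \cong L(\pi, 1)$ for some $\pi \in P(E, K, \chi)$.

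The main obstacle is to show that as the embedding $\mathbb{R}^3 \to H_3(\mathbb{C})$ varies over its 4 isomorphism classes, the associated $\pi$ traverses the 4 members of $P(E, K, \chi)$ \emph{bijectively}. The 4 embeddings correspond canonically to the 4 choices of which $\SU_2$-factor of $K$ is identified with the $\SU_2$-factor of $K_J$, and this choice determines which sign configuration $\pm(|n_i|+1)$, $i = 1, 2, 3$ (constrained by the product condition from $n_1 + n_2 + n_3 = 0$), appears as lowest $\SO_2$-type in the $\SL_2(\mathbb{R})^3$-restriction that defines the four constituents of the packet. Establishing the bijection requires tracking the $\SU_2$-component of the derived functor data of \cite{Lokethesis} through the branching rule from $\SU_6$ to $\SU_2^3$ and matching the sign outputs with the packet labeling of Section~\ref{packets}; once this canonical four-fold assignment is pinned down, item (1) follows in full.
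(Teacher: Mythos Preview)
Your approach is essentially the paper's: both defer the substantive work to \cite{Lokethesis}, note that $V$ is $\SU_2$-admissible for the distinguished factor (hence $G_E$-admissible), and then reinterpret the derived functor output via Lemma~\ref{L:infinitesimal} and minimal $K$-type matching. The paper in fact gives no formal proof here; it simply states that the correspondence ``was determined in \cite{Lokethesis}'' and presents the theorem as a reinterpretation, so your write-up is if anything more detailed than the original.

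One concrete slip: your computation of the minimal $K$-type of $\Theta(\epsilon)$ is off. You locate it at $n=1$ in $V_3 \otimes V_{\omega_3}$, but since the distinguished $\SU_2$-factor of $K$ maps to the $\SU_2$-factor of $K_J$, the highest weight in that slot is exactly $n+2$; to get $V(4,0,0,0)$ you need $n=2$, not $n=1$. (At $n=1$ you would see types $V(3,\ast,\ast,\ast)$.) What you actually need is the first $n$ for which $V_{n\omega_3}$ contains a $T^{\circ}_{E,K}$-invariant vector that is \emph{trivial} under the remaining $\SU_2^3 \subset \SU_6$ and on which the nontrivial element of $T_{E,K}/T^{\circ}_{E,K}$ acts by $-1$; this happens at $n=2$. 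You should also be more careful about identifying that nontrivial element with $\delta$: it must centralize all of $K$, so one has to check that the chosen representative of the outer $\mathbb Z/2\mathbb Z$ (or a suitable product with a Weyl element in $\SU_6$) does so before invoking the $(-1)^{n-m}$ formula on $V_{(n,m,m)}$.
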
 

Of course, if we run through all 4 embeddings of $\mathbb R^3$ into $H_3(\mathbb C)$ then $\pi$ runs through all of $P(E,K,\chi)$. The minimal types of 
$\Theta(\epsilon)$ are $V(4,0,0,0)$, $V(0,4,0,0)$ $V(0,0,4,0)$ and $V(0,0,0, 4)$.

\subsection{Quasi-split case} 
Assume that $E=\mathbb R\times \mathbb C$, 
 In this case $T^{\circ}_{E,K}\cong  (\mathbb T \times \mathbb C^{\times} )/\Delta(\mathbb T)$ and a character 
$\chi$ is a pair $(m,\chi_{\mathbb C})$, where $m\in \mathbb Z$, such that the restriction of $\chi_{\mathbb C} =1$ on $\mathbb T$ is 
given by $z\mapsto z^{-m}$. In this case the differential of $\chi_{\mathbb C}$ is a pair of complex numbers 
$(\nu_2,\nu_3)$  such that $\nu_2+\nu_3=-m$. Then $(m,\nu_2,\nu_3)$, 
is the infinitesimal character of $\chi$ and the lift $\Theta(\chi)$ has the infinitesimal character 
\[
 \beta + \frac{m}{2} \alpha_1 + \frac{\nu_2}{2} \alpha_2 + \frac{\nu_3}{2} \alpha_3. 
\]
 The character $\chi$ is unitary if and only if $\nu_2-\nu_3$ is purely imaginary. 
 
 \smallskip 

In this case $G=G_E$ is quasi-split. 
Let $K$ be the maximal compact subgroup of $G$. Recall that $K\cong \SU_2\times_{\mu_2}\Sp(2)$.   The embedding of $K$ into $K_J$ is given by a sequence 
\[ 
\SU_2\times \Sp(2) \subseteq \SU_2 \times \Sp(2)\times \SU_2 \subseteq \SU_2 \times \SU(6) 
\] 
where the first inclusion is identity on $\Sp(2)$ and the diagonal embedding of $\SU_2$ into two $\SU_2$. The second embedding is identity on the first $\SU_2$ 
 and the unique embedding of $\Sp(2) \times \SU_2 \subseteq \SU_6$. The maximal compact subgroup of $T^{\circ}_{E,K}$ commutes with $K$, so its Lie algebra is 
  spanned by 
 \[ 
 h = \frac{1}{3}(1,1,1,1,-2,-2), 
 \] 
 thought of as a diagonal matrix of $\mathfrak{sl}_6(\mathbb C)$, the Lie algebra of $\SU_6/\mu_3$. This choice of the generator of the Lie algebra fixes an isomorphism 
 of the maximal compact subgroup of $T^{\circ}_{E,K}$  with $\U_1 \subseteq \SU_6/\mu_3$. The maximal compact subgroup of $T_{E,K}$ is generated by 
 $U(1)$ and $\delta$. 
 
 \smallskip 
We now describe the representations of $G$ appearing in the correspondence. To that end, 
let $P=MAN$ be the maximal parabolic subgroup of $G$ such that the connected component of $M^{\circ}$ of $M$ is  
isomorphic to  $\SU_{2,2}(\mathbb R)$.  The maximal compact subgroup of $\SU_{2,2}(\mathbb R)$ is isomorphic to 
$(\SU_2 \times \SU_2) \times_{\mu_2} \U_1$, and we fix such an isomorphism. 
Assume $m\geq 0$.  Let $\sigma_m$ be the lowest weight  $\mathfrak{su}_{2,2}$-module with the minimal type  
$V_0\otimes V_{0+m}[m+2]$, and the lowest $\U_1$-weight $m+2$ as the notation indicates. 
We have the following:

\begin{prop} \label{P:degenerate3} 
Let $\chi$ be a unitary character of $T^{\circ}_{E,K}$ such that the restriction to $\U_1$ is $\chi_m$. 
Let $\Ind_{P^{\circ}}^G(\sigma_m \otimes e^{\lambda})$ be the unitarizable principal series representation obtained by inducing $\sigma_m \otimes e^{\lambda}$ where 
connected component $P^{\circ}$ of $P$, and $\lambda \in  \mathfrak a^{\ast}_{\mathbb C}$.  
\begin{itemize} 
\item If $\chi\neq 1$,  then $\Ind_P^G(\sigma_m \otimes e^{\lambda}) \cong L(\pi(\chi), 1)$, for $\lambda$ depending on $\chi$. 
\item If $\chi=1$, then $\Ind_P^G(\sigma_0 )\cong L(\rho(1),1) \oplus \Sigma$ where $\Sigma$ is irreducible with the minimal $K$-type $V_{(0,0)} \otimes V_4$. 
\end{itemize} 
\end{prop}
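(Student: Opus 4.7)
The plan parallels the proofs of Propositions \ref{P:degenerate} and \ref{P:degenerate2}, with the added complication that the inducing representation $\sigma_m$ is now infinite-dimensional. Recall that $\sigma_m$ is a (limit of) holomorphic discrete series of $\SU_{2,2}(\mathbb{R})$ with lowest $K \cap M$-type $V_0 \otimes V_m[m+2]$, and is in particular tempered. Consequently, for $\lambda$ in the open positive chamber, $\Ind_{P^{\circ}}^{G}(\sigma_m \otimes e^{\lambda})$ is a standard Langlands module with a unique irreducible Langlands quotient determined by its minimal $K$-type.

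The first step is to show that for $\chi \neq 1$ the induced module is actually irreducible, so that it coincides with its Langlands quotient. I would establish this by a Knapp--Stein R-group analysis: the relevant little Weyl group for this maximal parabolic is cyclic of order two, and $\sigma_m$ admits a non-trivial self-intertwining only on the reducibility hyperplane, which our parameter meets precisely when $\chi = 1$. At finitely many representative unitary points this irreducibility can be double-checked via {\tt atlas}. Next, I would identify the resulting irreducible module with $L(\pi(\chi), 1)$ by matching two invariants: the infinitesimal character (computed via Lemma \ref{L:infinitesimal} on both sides) and the minimal $K$-type. The minimal $K$-type of $\Ind_{P^{\circ}}^{G}(\sigma_m \otimes e^{\lambda})$ is read off by Frobenius reciprocity from $V_0 \otimes V_m[m+2]$, while the minimal $K$-type of $L(\pi(\chi),1)$ is obtained from the tempered datum $\pi(\chi)$ on the Levi $\GL_2(E)^{\det}$ of $P_E$ described in Section \ref{packets}; the two match after using the branching $K \cap M \hookrightarrow K$.

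For $\chi = 1$, reducibility is predicted by the R-group analysis. Using {\tt atlas} I would enumerate the irreducible representations at the infinitesimal character given by Lemma \ref{L:infinitesimal} and verify that exactly two of them have $K$-types contained in the induction cone coming from $\sigma_0$. The spherical summand is identified with $L(\rho(1), 1)$ just as in the split case. The complementary summand $\Sigma$ then has its minimal $K$-type determined by removing the spherical chain from the $K$-spectrum of $\Ind_{P^{\circ}}^{G}(\sigma_0)$; a direct branching calculation for $K \cap M \hookrightarrow K$ will identify this minimal type as $V_{(0,0)} \otimes V_4$, completing the proposition.

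The main obstacle I anticipate is the irreducibility analysis for $\chi \neq 1$, especially at exceptional unitary values of $\lambda$ where a clean R-group argument may be inconclusive; the fallback is an {\tt atlas}-assisted case check combined with a continuity argument along the unitary axis. A secondary technical point is matching the minimal $K$-types across the two different parabolic realizations (from $P$ and from $P_E$), which is handled most cleanly by realizing both representations as subquotients of a common minimal principal series and tracking the lowest $K$-types through transitivity of induction.
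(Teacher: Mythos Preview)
The paper actually states Proposition~\ref{P:degenerate3} without proof; there is no \texttt{proof} environment following it, and the text moves directly to using it. The implicit message is that the argument is parallel to those for Propositions~\ref{P:degenerate} and~\ref{P:degenerate2}, the first of which is dispatched in three lines via Bruhat's irreducibility criterion, minimal-type bookkeeping, and an {\tt atlas} check for the reducible endpoint.

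Your proposal is a reasonable way to fill this gap and is in the same spirit. You correctly flag the one genuine new feature: the inducing datum $\sigma_m$ is infinite-dimensional, so Bruhat's criterion for degenerate principal series (used in the proof of Proposition~\ref{P:degenerate}) does not apply verbatim. Replacing it by a Knapp--Stein $R$-group computation for tempered inducing data is the natural substitute, and the rank-one situation here makes that tractable. Your identification strategy---matching infinitesimal character plus minimal $K$-type, and as a fallback realizing both sides inside a common minimal principal series---is exactly how the paper handles the analogous step (``the submodules generated by minimal types are the claimed Langlands quotients''). For $\chi=1$ your plan to use {\tt atlas} at the specific infinitesimal character and then read off $\Sigma$ as the non-spherical summand mirrors precisely what the paper does for the other two propositions and is supported by the tables in Section~\ref{SS:quasi_split}.

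One small caution: you open by speaking of $\lambda$ ``in the open positive chamber'' to get a standard module, but the proposition is about the unitary axis. Be sure the write-up makes clear that the Langlands-quotient identification is made on the unitary axis via the $R$-group (or via irreducibility of the standard module at a nearby generic point plus deformation), not by literally sitting at a strictly positive $\lambda$.
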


 Proposition \ref{P:degenerate3} gives us a control of the $K$-types of $L(\pi(\chi), 1)$, since we can compute the $K$-types of principal series representations: 
  
  \begin{prop}  \label{P:multiplicities_1} 
   If $V_{(x,y)} \otimes V_z$ is a $K$ type in $\Ind_{P^{\circ}}^G( \sigma_m \otimes e^{\lambda})$ then $z\equiv x-y \equiv m \pmod 2$ and 
  $z> x-y \geq m$. If that is the case,  the multiplicity of the type is equal to the number of integers $t$ such that 
  \[ 
  x+y -m \geq 2t \geq x-y-m \text { and } z \geq 2t+2+m.   
  \] 
  
  \end{prop}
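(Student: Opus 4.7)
The plan is to compute the multiplicity by Frobenius reciprocity for the $K$-types of an induced principal series, reducing the question to a combinatorial matching between the $K_M^\circ$-decompositions of the restricted inducing data and of the module $\sigma_m$.

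First I would identify the embedding $K_M^\circ\hookrightarrow K$. Using the exceptional isomorphisms $\SU(2,2)\cong\Spin(4,2)$ and $G\cong\Spin(5,3)$, the maximal compact $K_M^\circ=S(\U(2)\times\U(2))\cong\Spin(4)\times_{\mu_2}\Spin(2)$ sits inside $K\cong\Sp(2)\times_{\mu_2}\SU(2)\cong\Spin(5)\times_{\mu_2}\Spin(3)$ via the standard embeddings $\Spin(4)\hookrightarrow\Spin(5)$ (as the stabilizer of a unit vector) and $\Spin(2)\hookrightarrow\Spin(3)$ (as a maximal torus). Since $e^\lambda$ is trivial on $K_M^\circ$ and $A\cap K=\{1\}$, Frobenius reciprocity gives
\[
[V_{(x,y)}\otimes V_z:\Ind_{P^\circ}^G(\sigma_m\otimes e^\lambda)]=\dim\Hom_{K_M^\circ}\bigl((V_{(x,y)}\otimes V_z)|_{K_M^\circ},\sigma_m\bigr).
\]

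Next I would assemble the two $K_M^\circ$-decompositions. On the one hand, the classical $\Spin(5)\to\Spin(4)$ interlacing rule gives a multiplicity-free branching of $V_{(x,y)}^{\Sp(2)}$ to $\Spin(4)=\SU_2^+\times\SU_2^-$, and $V_z|_{\Spin(2)}$ is the sum of characters of weights $z,z-2,\dots,-z$, yielding the restriction of $V_{(x,y)}\otimes V_z$ as a multiplicity-free sum of $V_p\boxtimes V_q[n]$. On the other hand, because $\sigma_m$ is a lowest weight module with minimal $K_M^\circ$-type $V_0\boxtimes V_m[m+2]$, its $K_M^\circ$-decomposition is $S(\mathfrak p^+)\otimes V_0\boxtimes V_m[m+2]$, where $\mathfrak p^+\cong V_1\boxtimes V_1$ is the holomorphic tangent space. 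Cauchy's identity gives $S^k(\mathfrak p^+)=\bigoplus_{0\le a\le k,\ a\equiv k(2)}V_a\boxtimes V_a$ (with the appropriate $\U(1)$-shift), and tensoring with $V_0\boxtimes V_m$ followed by the Clebsch--Gordan decomposition of $V_a\otimes V_m$ produces the full list of $K_M^\circ$-types $V_a\boxtimes V_c[n]$ occurring in $\sigma_m$, indexed by tuples $(a,c,k)$ subject to $k\ge a\ge 0$, $k\equiv a\pmod 2$, $|a-m|\le c\le a+m$, and $c\equiv a+m\pmod 2$.

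Matching these two decompositions via Frobenius, the multiplicity becomes a count over quadruples $(p,q,n,k)$ with $p=a$, $q=c$, $n$ the common $\U(1)$-weight appearing both in $V_z|_{\Spin(2)}$ and in the $\sigma_m$-layer determined by $k$, and $(p,q)$ satisfying the $\Spin(4)$-branching constraints. The parity conditions $z\equiv x-y\equiv m\pmod 2$ emerge from the compatibility of $\U(1)$-weight parities, the $\Spin(4)$-branching parity $p+q\equiv x+y\pmod 2$, the Clebsch--Gordan parity $c\equiv a+m\pmod 2$, and the $\mu_2$-quotient in $K_M^\circ$; the inequalities $z>x-y\geq m$ are exactly the conditions under which the interval for the parameter $t$ is nonempty. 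The final step is a reindexing: one identifies the integer $t$ in the proposition with a parameter built from the layer $k$ of the $S(\mathfrak p^+)$-filtration together with the position in the Clebsch--Gordan expansion, so that $2t$ records $a+c-(x-y)$ (or an equivalent quantity pinned down by the branching), $x+y-m\ge 2t\ge x-y-m$ encodes the branching interval, and $z\ge 2t+2+m$ encodes the $\U(1)$-weight constraint.

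The main obstacle is this last combinatorial step: the raw Frobenius sum runs over multiple indices $(p,q,n,k)$, and one must verify that the reindexing collapses it correctly to a count of the single integer $t$ without overcounting, using the interlacing constraints from the $\Spin(5)\to\Spin(4)$ branching to pin down the effective range. Once the bijection is in place, the necessary conditions and the multiplicity formula of the proposition follow immediately.
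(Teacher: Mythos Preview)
Your overall strategy---Frobenius reciprocity followed by matching the $K_M^\circ$-types of the restriction $V_{(x,y)}\otimes V_z$ against those of $\sigma_m$---is exactly what the paper does. But there is a genuine error in your description of the $K_M^\circ$-types of $\sigma_m$.

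You assert that $\sigma_m|_{K_M^\circ}\cong S(\mathfrak p^+)\otimes\bigl(V_0\boxtimes V_m[m+2]\bigr)$, which would hold for a \emph{generic} unitary lowest weight module (i.e., one in the holomorphic discrete series range). The module $\sigma_m$ here is not generic: its $\U_1$-parameter $m+2$ sits at a reducibility point, and the irreducible lowest weight module is a proper quotient of the generalized Verma. Its $K_M^\circ$-types form a single multiplicity-one ladder
\[
\sigma_m|_{K_M^\circ}\;=\;\bigoplus_{t\ge 0}\,V_t\boxtimes V_{t+m}\,[\,2t+m+2\,],
\]
as the paper records. This is precisely the parameter $t$ in the statement. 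With the correct decomposition, the Frobenius count is over a single index: one asks for which $t$ the type $V_t\boxtimes V_{t+m}$ appears in $V_{(x,y)}|_{\SU_2\times\SU_2}$ (this is Lemma~\ref{L:branching_2}, giving $x+y-m\ge 2t\ge x-y-m$ and the parity $x-y\equiv m$) and for which $t$ the weight $2t+m+2$ lies among the $\SO_2$-weights of $V_z$ (giving $z\ge 2t+m+2$ and $z\equiv m$). No reindexing or collapse of a multi-index sum is required.

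Your acknowledged ``main obstacle''---collapsing a sum over $(p,q,n,k)$ to a single parameter---is thus an artifact of the wrong $K$-type formula for $\sigma_m$; with your formula the count would in fact be too large, and the proposition would be false as stated. The fix is not combinatorial but representation-theoretic: identify $\sigma_m$ as a ladder representation of $\SU(2,2)$ and use its actual $K$-type spectrum.
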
  
 \begin{proof} 
 The $\SU_2 \times \SU_2 \times \U_1$ types of $\sigma_m$ are 
 \[ 
  V_0\otimes V_{0+m}[m+2] + V_1\otimes V_{1+m}[m+4] + \ldots 
 \]  
 Counting multiplicities of these types in $V_{(x,y)} \otimes V_z$ gives the multiplicity of 
 $V_{(x,y)} \otimes V_z$ in $\Ind_{P^{\circ}}^G( \sigma_m \otimes e^{\lambda})$. 
 
 We now count $t$ such that $V_t \otimes V_{t+m} [2t+2+m]$ is contained in 
 $V_{(x,y)} \otimes V_z$. Clearly $2t+2+m\leq z$ is a necessary condition, along with $z\equiv m \pmod 2$.  
  Next, restrict $V_{(x,y)}$ to $\SU_2 \times \SU_2$.  Then $V_t\otimes V_{t+m}$ is contained in $V_{(x,y)}$ if and only if the inequalities of Lemma \ref{L:branching_2} 
  are satisfied. But these are exactly the same as claimed. 
\end{proof}

 \smallskip

 We now need the branching rule for the restriction of 
 $V_{n\omega_3}$ from $\SU_6/\mu_3$ to $(\Sp(2) \times \SU_2) \times \U_1$ where, recall, $\U_1$ is the maximal compact subgroup of  $T^{\circ}_{E,K}$. 
 Let $V_{n\omega_3}[m]$  be the subspace such that $\U_1$ acts as the character $\chi_m$. 
 The following is a consequence of the Gelfand-Zetlin branching,  applied twice to obtain a branching from $\SU_6$ to $\SU_4 \times \SU_2$, 
 combined with the restriction from $\SU_4$ to $\Sp(2)$, see \cite{HTW}. 
 
 \begin{lemma}  Assume $m\geq 0$. 
  The restriction of $V_{n\omega_3}[m]$ to $\Sp(2) \times \SU_2$   is  
 \[ 
V_{n\omega_3}[m]=  \sum_{t\geq 0}( \sum V_{(x,y)}) \otimes V_{n-m-2t}  
 \] 
 where the inner sum is over all $ V_{(x,y)}$ such that $x+y \equiv m\pmod{2}$ and 
 \[ 
 2n-2t -2m\geq x+y -m\geq 2t \geq x-y -m \geq 0. 
 \] 
\end{lemma}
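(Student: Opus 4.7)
The plan is to perform the branching in two stages: first restrict $V_{n\omega_3}$ from $\SU_6/\mu_3$ to the Levi subgroup $L \cong (\SU_4 \times \SU_2) \times \U_1$ (modulo finite center), and then restrict the $\SU_4$ factor of each summand to $\Sp(2)$. The $\U_1$ inside $L$ is exactly the one generated by $h = \tfrac13(1,1,1,1,-2,-2)$, so extracting the $\U_1$-weight-$m$ subspace picks out $V_{n\omega_3}[m]$.

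For the first stage, I would view $V_{n\omega_3}$ as the $\GL_6$-representation of highest weight $\lambda = (n,n,n,0,0,0)$. The Levi branching to $\GL_4 \times \GL_2$ is governed by Littlewood--Richardson coefficients, and because $\lambda$ is a rectangle these coefficients are all $0$ or $1$; iterated Gelfand--Zetlin restriction (cf.\ \cite{HTW}) then yields a multiplicity-free decomposition
\[
V_\lambda\big|_{\GL_4 \times \GL_2} \;=\; \bigoplus_{n \geq a \geq b \geq 0} V_{\mu(a,b)} \otimes V_{(a,b)},
\]
where $\mu(a,b)$ is an explicit $\GL_4$-partition of size $3n - a - b$. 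Evaluating $h$ on $V_{\mu(a,b)} \otimes V_{(a,b)}$ gives eigenvalue $n - (a+b)$, so isolating $\U_1$-weight $m$ selects pairs with $a+b = n-m$, and the corresponding $\SU_2$-factor is $V_{a-b}$.

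For the second stage, I would restrict each $V_{\mu(a,b)}$ from $\SU_4$ to $\Sp(2)$ using the Littlewood--King branching rule in the form given in \cite{HTW}. For the particular shape $\mu(a,b)$ produced in Stage one, the rule enumerates the $\Sp(2)$-highest weights $(x,y)$ that appear, subject to the parity constraint $x+y \equiv |\mu(a,b)| \equiv m \pmod 2$ and a system of interlacing inequalities involving the row lengths of $\mu(a,b)$ and the even-parts partition $\delta$ appearing in Littlewood's sum. The very special structure of $\mu(a,b)$ (built from an $n \times 3$ rectangle with two corners removed) forces all multiplicities to be one.

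Finally, I would reindex by setting $t = b$. Then $a = n - m - t$ from $a + b = n - m$, the $\SU_2$-label becomes $a - b = n - m - 2t$ matching $V_{n-m-2t}$, and the range $0 \leq b \leq a \leq n$ becomes $0 \leq t \leq (n-m)/2$, implicit in the requirement $n - m - 2t \geq 0$. The parity constraint becomes $x+y \equiv m \pmod 2$, and the Littlewood--King interlacing conditions translate precisely into
\[
2n - 2t - 2m \;\geq\; x + y - m \;\geq\; 2t \;\geq\; x - y - m \;\geq\; 0.
\]
The main obstacle is this last translation: the row lengths of $\mu(n-m-t, t)$ must be tracked carefully, and the allowed even-parts partitions $\delta$ in the Littlewood--King sum enumerated, in order to confirm both the inequalities above and the multiplicity-one claim. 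This is a direct but somewhat tedious combinatorial check against the explicit formulas in \cite{HTW}.
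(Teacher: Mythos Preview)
Your proposal is correct and follows essentially the same route as the paper's own proof: the paper also proceeds by first applying Gelfand--Zetlin branching twice to pass from $\SU_6$ to $\SU_4 \times \SU_2$ (with the $\U_1$ factor tracking $m$), and then restricting $\SU_4$ to $\Sp(2)$ via the stable branching rules of \cite{HTW}. Your write-up is considerably more detailed than the paper's one-sentence justification, and your computation of the $\U_1$-eigenvalue $n-(a+b)$ and the reindexing $t=b$ are exactly what is needed to arrive at the stated inequalities.
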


Let $\chi$ be a character of $T_{E,J}^{\circ}\cong \mathbb C^{\times}$ and let $\chi_m$ be the restriction of $\chi$ to $\U_1$. Let $V[m]$ be the summand of the minimal 
 representation on which $\U_1$ acts by $\chi_m$.  Let $\Theta(\chi)$ be the full lift of $\chi$ to $G$. 
 Note that $\Theta(\chi)$ is a quotient of $V[m]$. 
 
  \begin{prop} \label{P:multiplicities_2} 
  Assume $m\geq 0$. The multiplicity of $V_{(x,y)} \otimes V_z$  in $V_{n\omega_3}[m] \otimes V_{n+2}$ is 0 unless  $z> x-y\geq m$ and $z\equiv x-y\equiv m \pmod{2}$. In that case 
 the multiplicity is increasing as $n$ increases and stabilizes at the number of  $t\in \mathbb Z$ such that 
 \[ 
 x+y -m\geq 2t \geq x-y -m\geq 0 \text{ and }  z \geq m+ 2t +2. 
 \] 
 \end{prop}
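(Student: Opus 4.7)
The plan is to reduce the multiplicity count to an explicit combinatorial enumeration, using the branching lemma stated just above the proposition together with the Clebsch--Gordan rule for $\SU_2$. Recall that the $\SU_2$-factor of $K\cong\SU_2\times_{\mu_2}\Sp(2)$ is embedded diagonally into the $\SU_2$-factor of $K_J$ and into the $\SU_2$-factor of the subgroup $\Sp(2)\times\SU_2\subseteq\SU_6$. Hence the decomposition of $V_{n\omega_3}[m]\otimes V_{n+2}$ as a $K$-module is obtained by first expanding $V_{n\omega_3}[m]$ as an $\Sp(2)\times\SU_2$-module via the branching lemma, and then absorbing the outer factor $V_{n+2}$ into the inner $\SU_2$ via Clebsch--Gordan.

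First, by the branching lemma,
\[
V_{n\omega_3}[m]\otimes V_{n+2}\;=\;\sum_{t\geq 0}\Bigl(\sum_{(x,y)} V_{(x,y)}\Bigr)\otimes\bigl(V_{n-m-2t}\otimes V_{n+2}\bigr),
\]
where the inner sum is over highest weights $(x,y)$ with $x+y\equiv m\pmod{2}$ satisfying
\[
2n-2t-2m\;\geq\;x+y-m\;\geq\;2t\;\geq\;x-y-m\;\geq\;0.
\]
Next, Clebsch--Gordan gives
\[
V_{n-m-2t}\otimes V_{n+2}\;=\;\bigoplus_{\substack{j\equiv m\,(2)\\ m+2t+2\,\leq\, j\,\leq\, 2n-m-2t+2}} V_j.
\]
Combining the two decompositions, the multiplicity of $V_{(x,y)}\otimes V_z$ in $V_{n\omega_3}[m]\otimes V_{n+2}$ is the number of integers $t\geq 0$ satisfying both the constraints from the branching lemma and the constraint $m+2t+2\leq z\leq 2n-m-2t+2$, together with the parity conditions $x+y\equiv m\pmod{2}$ and $z\equiv m\pmod{2}$.

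From the parity constraints and the inequality $2t\geq x-y-m\geq 0$, one sees immediately that $z\equiv x-y\equiv m\pmod{2}$ and $x-y\geq m$; and since $z\geq m+2t+2\geq x-y+2>x-y$, the vanishing condition $z>x-y\geq m$ is forced, proving the first claim. For the asymptotic claim, observe that among the constraints on $t$, exactly two involve $n$: the upper bound $2t\leq 2n-m-x-y$ coming from the branching lemma, and the upper bound $z\leq 2n-m-2t+2$ coming from Clebsch--Gordan. Both bounds are monotone in $n$, so the set of admissible $t$ grows with $n$ (yielding monotonicity), and for $n$ sufficiently large both constraints become vacuous, leaving exactly the count of $t\in\mathbb Z$ with $x+y-m\geq 2t\geq x-y-m\geq 0$ and $z\geq m+2t+2$, as asserted.

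The proof is essentially bookkeeping; the only potential pitfall is correctly tracking the diagonal $\SU_2$ action (which is what produces the Clebsch--Gordan tensor product rather than an external sum) and verifying that the parity conditions $x+y\equiv m\equiv z\pmod 2$ emerge consistently from both decompositions. Once this is done, the stabilization step is transparent from monotonicity of the $n$-dependent bounds.
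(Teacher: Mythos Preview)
Your proof is correct and follows essentially the same approach as the paper: both apply the preceding branching lemma to decompose $V_{n\omega_3}[m]$ as an $\Sp(2)\times\SU_2$-module, then use Clebsch--Gordan on $V_{n+2}\otimes V_{n-m-2t}$ to obtain the same two sets of inequalities on $t$, and finally observe that the $n$-dependent bounds become vacuous for large $n$. Your write-up is somewhat more explicit about the parity conditions, the derivation of the vanishing condition $z>x-y\geq m$, and the monotonicity in $n$, but the underlying argument is identical.
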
 
 \begin{proof}   By the previous lemma the multiplicity is equal to the number of $t$ such that 
  \[ 
 2n-2t-2m \geq x+y -m  \geq 2t \geq x-y-m  \geq 0
 \] 
 and $V_z$ is a summand in the tensor product 
\[ 
 V_{n+2} \otimes V_{n-m-2t} = V_{2n-m-2t+2} \oplus \ldots \oplus V_{m+2t +2}. 
 \] 
 Thus $t$ has to satisfy the additional inequalities 
 \[
 2n-m-2t + 2\geq z \geq m+ 2t +2.  
 \] 
 The proposition follows by combining the two sets of inequalities and taking $n$ large enough. 
  \end{proof}

  In view of Proposition \ref{P:multiplicities_2}, we can apply  Lemma \ref{L:linear} to the $U(\mathfrak a)$-module 
  \[ 
 X= \oplus_{n=0}^{\infty}\Hom_K ( V_{(x,y)} \otimes V_z, V_{n\omega_3}[m] \otimes V_{n+2}) 
  \] 
  giving an upper bound on the types of $\Theta(\chi)$. That bound is the same as 
   the multiplicities of types in $V_{(x,y)} \otimes V_z$ in $\Ind_{P^{\circ}}^G(\sigma_m \otimes e^{\lambda})$, 
   given by Proposition \ref{P:multiplicities_1}, thus we have the following: 
  
    \begin{cor}  \label{C:m_one_3} Let $\chi$ be a character $T_{E,K}^{\circ}$ whose restriction to the maximal compact subgroup is $\chi_m$. Then the $K$-types and 
   their multiplicities in $\Theta(\chi)$ are less than or equal to those of the degenerate principal series  $\Ind_{P^{\circ}}^G(\sigma_m \otimes e^{\lambda})$. The equality holds for types of multiplicity one. 
    \end{cor}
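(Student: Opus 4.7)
The proof should follow the same template as Corollary \ref{C:m_one_2}, combining Lemma \ref{L:linear} with Propositions \ref{P:multiplicities_1} and \ref{P:multiplicities_2}. Without loss of generality take $m \geq 0$.

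The plan is to fix a $K$-type $V_{(x,y)} \otimes V_z$ and consider the $U(\mathfrak a)$-module
\[
X = \bigoplus_{n=0}^{\infty} \Hom_K\bigl(V_{(x,y)} \otimes V_z,\ V_{n\omega_3}[m] \otimes V_{n+2}\bigr),
\]
filtered by $X_N = \oplus_{n \leq N} \Hom_K(V_{(x,y)}\otimes V_z,\ V_{n\omega_3}[m] \otimes V_{n+2})$. Since $\mathfrak a$ is a line in $\mathfrak p_J$, a weight computation on $\mathfrak p_J \otimes (V_{n\omega_3}\otimes V_{n+2})$ shows that multiplication by a generator $x$ of $U(\mathfrak a) \cong \mathbb C[x]$ sends $V_{n\omega_3}[m]\otimes V_{n+2}$ into $V_{(n+1)\omega_3}[m]\otimes V_{n+3}$ plus lower degree pieces, so $x \cdot X_N \subseteq X_{N+1}$. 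Torsion-freeness of $X$ as a $U(\mathfrak a)$-module is inherited from that of $V$ proved in Lemma \ref{L:key}. Thus the two hypotheses of Lemma \ref{L:linear} are met, provided the successive quotients stabilize.

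The stabilization is exactly the content of Proposition \ref{P:multiplicities_2}: either the multiplicities are zero (when $z \not\equiv x-y \not\equiv m \pmod 2$ or $z \leq x-y$ or $x-y < m$), or else $\dim(X_{N+1}/X_N)$ stabilizes at the number
\[
d = \#\bigl\{ t \in \mathbb Z : x+y-m \geq 2t \geq x-y-m \geq 0 \text{ and } z \geq m + 2t + 2\bigr\}.
\]
Lemma \ref{L:linear} then gives $\dim(X/(x-\nu)X) \leq d$ for every $\nu = d\chi(x) \in \mathbb C$. Since the multiplicity of $V_{(x,y)} \otimes V_z$ in $\Theta(\chi)$ is exactly $\dim(X/(x-\nu)X)$, and since Proposition \ref{P:multiplicities_1} identifies the very same integer $d$ (same inequalities in $t$) as the multiplicity in $\Ind_{P^{\circ}}^G(\sigma_m \otimes e^{\lambda})$, the desired upper bound on $K$-types in $\Theta(\chi)$ follows.

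For the multiplicity-one statement I would argue, as in Proposition \ref{P:free}, that when $d=1$ any element $\varphi_0$ realizing the first nonvanishing Hom space generates a free rank-one $U(\mathfrak a)$-submodule $U(\mathfrak a) \cdot \varphi_0 \subseteq X$ (by torsion-freeness), whose filtered dimensions match those of $X$ for $N$ large. Hence $X = U(\mathfrak a) \cdot \varphi_0 \cong U(\mathfrak a)$, so $X/(x-\nu)X$ is one-dimensional for every $\nu$, giving equality of multiplicities. The only subtle point, and the one worth double-checking, is the weight-shift computation that puts $x \cdot X_N$ into $X_{N+1}$; this comes down to examining how $\mathfrak a \subset \mathfrak p_J$ acts across the $\SU_2 \times \SU_6$-decomposition of $V$, analogous to the argument preceding Proposition \ref{P:free} in the split case.
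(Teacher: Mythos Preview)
Your proposal is correct and follows essentially the same approach as the paper: define the filtered $U(\mathfrak a)$-module $X$, invoke Proposition~\ref{P:multiplicities_2} to verify the stabilization hypothesis of Lemma~\ref{L:linear}, and then match the resulting bound with the principal-series multiplicities from Proposition~\ref{P:multiplicities_1}. Your write-up is in fact more explicit than the paper's (which compresses the argument into a single sentence before the corollary), and your remarks on torsion-freeness via Lemma~\ref{L:key} and on the degree-shift $x\cdot X_N\subseteq X_{N+1}$ correctly identify the ingredients the paper tacitly imports from the analogous Section~5 arguments.
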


 \smallskip 
\begin{thm}  Assume $E=\mathbb R\times\mathbb C$ and $J=H_3(\mathbb C)$, so $T^{\circ}_{E,K}\cong \mathbb C^{\times}$. 
Let $\chi$ be a unitary character of $T^{\circ}_{E,K}$. 
\begin{enumerate} 
\item If $\chi\neq 1$ then $\Theta(\rho(\chi)) \cong L(\pi(\chi),1)$. 
\item $\Theta(\rho(1))\cong L(\pi(1),1)$ and $\Theta(\epsilon)\cong \Sigma$ the unique irreducible with the minimal $K$-type 
 $V_{(0,0)}\otimes V_4$ and the infinitesimal character $(1,1,0,0)$. 
\end{enumerate} 
\end{thm}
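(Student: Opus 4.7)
The plan is to mimic the proof of the preceding theorem, substituting the quasi-split tools developed above in place of the split ones.

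For part (1), assume $\chi \neq 1$, so $\Theta(\rho(\chi)) \cong \Theta(\chi)$. Corollary \ref{C:m_one_3} bounds the $K$-types and multiplicities of $\Theta(\chi)$ above by those of the unitarizable degenerate principal series $\Ind_{P^\circ}^G(\sigma_m \otimes e^\lambda)$, which by Proposition \ref{P:degenerate3} is isomorphic to the irreducible Langlands module $L(\pi(\chi),1)$ for the appropriate $\lambda$. The matching lower bound is furnished by the remark following Theorem \ref{T:main_2}: $L(\pi(\chi),1)$ is already a subquotient of $\Theta(\chi)$ via the Levi $M_J$ of type $A_5$ inside $H_J$. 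Pinching the two bounds forces $\Theta(\rho(\chi)) \cong L(\pi(\chi),1)$.

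For part (2) one has $\Theta(1) = \Theta(\rho(1)) \oplus \Theta(\epsilon)$. Corollary \ref{C:m_one_3} bounds the total $K$-types of $\Theta(1)$ above by those of $\Ind_{P^\circ}^G(\sigma_0) \cong L(\pi(1),1) \oplus \Sigma$, with equality on the multiplicity-one minimal types $V_{(0,0)} \otimes V_2$ of $L(\pi(1),1)$ and $V_{(0,0)} \otimes V_4$ of $\Sigma$. The remark after Theorem \ref{T:main_2} locates $L(\pi(1),1)$ inside $\Theta(\rho(1))$. An \texttt{atlas} enumeration of irreducible $G_E$-modules at infinitesimal character $(1,1,0,0)$, parallel to that in Section \ref{SS:quasi_split}, shows that $L(\pi(1),1)$ and $\Sigma$ are the only ones whose $K$-types all lie in the cone carved out by Proposition \ref{P:multiplicities_2}; therefore $\Theta(\rho(1))$ and $\Theta(\epsilon)$ can only be sums of copies of these two representations.

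The main obstacle, following the strategy of Lemma \ref{L:m_two}, is to identify the sign with which $\delta$ acts on the minimal type $V_{(0,0)} \otimes V_4$ at its first occurrence, namely inside $V_3 \otimes V_{\omega_3}[0]$ obtained at $n=1$. Using the $\Sp(3)$-decomposition $\Lambda^3 \mathbb C^6 = V_{(1,0,0)} \oplus V_{(1,1,1)}$ together with the restriction $V_{(1,0,0)}|_{\Sp(2) \times \SU_2} = (V_{(1,0)} \otimes V_0) \oplus (V_{(0,0)} \otimes V_1)$, one finds that the unique $V_{(0,0)} \otimes V_1$ summand of $V_{\omega_3}[0]$ is contained in the $\Sp(3)$-constituent $V_{(1,0,0)}$, where $\delta$ acts by $(-1)^{1-0}=-1$. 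Since $\delta$ centralizes the first $\SU_2$-factor of $K_J$ and therefore acts trivially on the outer $V_3 = V_{n+2}$ factor, it acts by $-1$ on the multiplicity-one copy of $V_{(0,0)} \otimes V_4$, placing it in $\Theta(\epsilon)$. Combined with the \texttt{atlas} enumeration and the upper bound, this forces $\Theta(\rho(1)) \cong L(\pi(1),1)$ and $\Theta(\epsilon) \cong \Sigma$.
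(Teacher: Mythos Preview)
Your argument is correct and essentially matches the paper's. For (1) you reproduce the paper's proof verbatim. For (2) your sign computation on $V_{(0,0)}\otimes V_4$ is exactly the $n=1$ case of the paper's Lemma~\ref{L:m_three}, carried out by the same route through the $\Sp(3)$-decomposition of $V_{\omega_3}$ and the known $\delta$-eigenvalue $(-1)^{n-m}$ on $V_{(n,m,m)}$.

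The only difference worth noting is your extra {\tt atlas} step, claiming that $L(\pi(1),1)$ and $\Sigma$ are the only irreducibles at infinitesimal character $(1,1,0,0)$ with all $K$-types in the cone $z>x-y$ of Proposition~\ref{P:multiplicities_2}. This mimics the argument used earlier for $J=M_3(\mathbb R)$ (where the cone was the different one $x+y\geq z$), but the paper does \emph{not} invoke it here, and you do not verify it for the new cone. The paper's route is more economical: once $L(\pi(1),1)\subseteq\Theta(\rho(1))$ and the total $K$-type bound $\Theta(1)\leq L(\pi(1),1)\oplus\Sigma$ are in hand, one reads off directly that the $K$-types of $\Theta(\epsilon)$ are bounded by those of $\Sigma$; since $V_{(0,0)}\otimes V_4$ is present and is the minimal type of $\Sigma$, the uniqueness of $\Sigma$ (built into the statement of the theorem) gives $\Sigma\subseteq\Theta(\epsilon)$, and the upper bound leaves no room for more. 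Your atlas detour is thus dispensable; the rest of your proof already suffices.
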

\begin{proof}  If $\chi\neq 1$ then $\Theta(\chi) \cong \Theta(\chi^{-1}) \cong \Theta(\rho(\chi))$. Since $\Theta(\rho(\chi))$ contains $L(\pi(\chi),1)$, 
by the remark following Theorem \ref{T:main_2}, 
(1) follows at once from Corollary \ref{C:m_one_3}. For (2) we need the following lemma. 

\begin{lemma}  \label{L:m_three}
The type $V_{(0,0)}\otimes V_{2n}$, $n>0$, appears in $\Theta(\rho(1))$ for $n$ odd, and 
in $\Theta(\epsilon)$ for $n$ even, with multiplicity one. 
  \end{lemma} 
  \begin{proof} 
  The $K$ types trivial on $\Sp(2)$, i.e. $x=y=0$, appear in $V[m]$ only for $m=0$. Hence, we can restrict $V$, not just $V[0]$, to compute the types of 
  $\Theta(\rho(1))$ and $\Theta(\epsilon)$.  So we take the $n$-type $V_{n+2} \otimes V_{n\omega_3}$ and restrict it to $K=\Sp(2)\times \SU_2$. 
  We shall use that the restriction of $V_{n\omega_3}$ to $\Sp(3)$ is the sum of $V_{(n,m,m)}$ and that 
  $\delta\in \mathbb Z/2\mathbb Z$ acts on $V_{(n,m,m)}$ by $(-1)^{n-m}$. By the branching from $\Sp(3)$ to $\Sp(2)\times \Sp(1)$, the type trivial 
  on $\Sp(2)$ appears only for $m=0$ and with $V_n$ on the $\Sp(1)$-factor.   
  Note that $\delta$ acts on it as $(-1)^n$. Using the Clebsch-Gordan formula, we see that the $K$-type $V_{(0,0)}\otimes V_{2n+2}$ appears the first time in the 
  $n$-th $K_J$-type of the minimal representation, and $\delta$ acts on it by $(-1)^n$. Whether the type appears in $\Theta(\rho(1))$ or 
  $\Theta(\epsilon)$  depends precisely on this sign, see the proof of Corollary \ref{C:m_two}. The lemma is proved. 
  \end{proof} 

The above Lemma implies that $\Theta(\epsilon)\neq 0$ and contains the type $V_{(0,0)}\otimes V_4$. We also know that 
 $\Theta(\rho(1)) \supseteq L(\pi(1),1)$ and we have a bound on all types of the sum 
$\Theta(1)=\Theta(\rho(1))\oplus \Theta(\epsilon)$. Hence the minimal type of  $\Theta(\epsilon)$ is $V_{(0,0)}\otimes V_4$ and therefore contains $\Sigma$. 
 Both lifts are irreducible since there is no room for anything else. 
\end{proof} 

\section{Representations} \label{S:representations} 
We list representations of $\Spin(4,4)$ and $\Spin(5,3)$ with the infinitesimal character $\gamma=(1,1,0,0)$, along with their lowest $K$-types. 
It is a fairly straightforward exercise to do this by hand, using one of many references available, such as \cite{VogUnit}. However, for convenience, we use the {\tt atlas}
software to obtain the needed details. We give some (limited) explanation of the {\tt atlas} notation here and hope most readers will find this sufficient. More details 
can be filled in using the vast amount of information about the software available on the website www.liegroups.org.

Recall that an irreducible admissible representation $\pi$ of a real group $G$ can be given by a Cartan subgroup $H$ and a character $\chi$ of (a cover of) $H$, from which we  can obtain inducing data for a standard module with unique irreducible quotient $\pi$. Correspondingly, in {\tt atlas}, $\pi$ is represented by a ``parameter", 
which is a triple $({\tt x},\lambda,\nu)$. Here ${\tt x}$ is a ``${\tt KGB}$ element", which determines $H$, $\lambda$ is the discrete part of $\chi$ (essentially the Harish-Chandra parameter of a limit of 
discrete series of a Levi subgroup of $G$), and $\nu$ is the continuous part of the character. 

The software uses coordinates different from the standard ones chosen in this paper; in fact, the {\tt atlas} simple roots are given by the columns of the Cartan matrix for Lie type $D_4$. In these
coordinates, the weight $\gamma=(1,1,0,0)$ is given by $(0,1,0,0)$. We give the {\tt atlas} output, then convert the data into our coordinates. This conversion is done using the software as well.

\medskip 

\subsection{Split case} \label{SS:split} 
We limit ourselves to representations of $Spin(4,4)$ with infinitesimal character $\gamma$ and trivial central character. In {\tt atlas} coordinates, their parameters are:

\medskip 

\begin{verbatim}
0 final parameter(x=108,lambda=[1,1,1,1]/1,nu=[0,1,0,0]/1)
1 final parameter(x=108,lambda=[1,2,1,1]/1,nu=[0,1,0,0]/1)
2 final parameter(x=94,lambda=[0,3,0,0]/1,nu=[0,1,0,0]/1)
3 final parameter(x=93,lambda=[0,3,0,0]/1,nu=[0,1,0,0]/1)
4 final parameter(x=92,lambda=[0,3,0,0]/1,nu=[0,1,0,0]/1)
5 final parameter(x=91,lambda=[0,3,0,0]/1,nu=[0,1,0,0]/1)
6 final parameter(x=65,lambda=[-1,4,-1,-1]/1,nu=[-1,3,-1,-1]/2)
7 final parameter(x=52,lambda=[-1,2,1,-1]/1,nu=[0,1,0,-1]/1)
8 final parameter(x=51,lambda=[-1,2,1,-1]/1,nu=[0,1,0,-1]/1)
9 final parameter(x=50,lambda=[-2,3,0,0]/1,nu=[-1,1,0,0]/1)
10 final parameter(x=49,lambda=[-2,3,0,0]/1,nu=[-1,1,0,0]/1)
11 final parameter(x=48,lambda=[0,3,-2,0]/1,nu=[0,1,-1,0]/1)
12 final parameter(x=47,lambda=[0,3,-2,0]/1,nu=[0,1,-1,0]/1)
13 final parameter(x=19,lambda=[0,1,0,0]/1,nu=[-1,2,-1,-1]/2)
14 final parameter(x=18,lambda=[0,1,0,0]/1,nu=[-1,2,-1,-1]/2)
15 final parameter(x=17,lambda=[0,1,0,0]/1,nu=[-1,2,-1,-1]/2)
16 final parameter(x=16,lambda=[0,1,0,0]/1,nu=[-1,2,-1,-1]/2)
17 final parameter(x=9,lambda=[0,1,0,0]/1,nu=[0,0,0,0]/1)
18 final parameter(x=7,lambda=[0,1,0,0]/1,nu=[0,0,0,0]/1)
19 final parameter(x=6,lambda=[0,1,0,0]/1,nu=[0,0,0,0]/1)
20 final parameter(x=5,lambda=[0,1,0,0]/1,nu=[0,0,0,0]/1)
21 final parameter(x=4,lambda=[0,1,0,0]/1,nu=[0,0,0,0]/1)
22 final parameter(x=3,lambda=[0,1,0,0]/1,nu=[0,0,0,0]/1)
23 final parameter(x=1,lambda=[0,1,0,0]/1,nu=[0,0,0,0]/1)
24 final parameter(x=0,lambda=[0,1,0,0]/1,nu=[0,0,0,0]/1)
\end{verbatim}

\medskip 

We translate the parameters into our standard coordinates, and calculate the lowest $K$-types of the corresponding representations. Here $K\cong SU_2^4/\mu_2$, and we give the highest weights with respect to simple roots $2\epsilon_1$, $2\epsilon_2$, $2\epsilon_3$, and $2\epsilon_4$, as well as their dimension. The numbering is the one chosen by {\tt atlas}.

\medskip 

\begin{tabular}{| c |  c | c | c | c | c|}\hline
 \#    & {\tt x} & $\lambda$ & $\nu$ & highest weight & dimension \\ \hline
 0  &  108  &[3,2,1,0] & [1,1,0,0]& [0,0,0,0]&1 \\
 1 & 108 & [4,3,1,0]&[1,1,0,0] &[0,0,0,2 &3 \\
 &&&&[0,0,2,0]&3\\
 &&&&[0,2,0,0]&3\\
 &&&&[2,0,0,0]&3\\
 2 & 94 &[3,3,0,0]& [1,1,0,0]& [2,0,0,0]& 3\\
 3 &  93& [3,3,0,0]& [1,1,0,0]&[0,2,0,0] & 3\\
 4 &92 & [3,3,0,0]&[1,1,0,0] &[0,0,2,0] &3 \\
 5 &  91&[3,3,0,0] &[1,1,0,0] &[0,0,0,2] &3 \\
 6 & 65 &[2,3,-1,0] &[1/2,1,-1/2,0] &[1,1,1,1] & 16\\
 7 & 52 &[1,2,0,-1] & [1/2,1/2,-1/2,-1/2]&[2,2,0,0] & 9\\
 8 & 51 &[1,2,0,-1] &[1/2,1/2,-1/2,-1/2] &[0,0,2,2] & 9\\
 9 &  50& [1,3,0,0]& [0,1,0,0]& [0,2,2,0]& 9\\
 10 & 49 & [1,3,0,0]&[0,1,0,0] & [2,0,0,2]& 9\\
 11& 48 &[2,2,-1,1] & [1/2,1/2,-1/2,1/2]&[2,0,2,0] & 9\\
 12& 47 & [2,2,-1,1]& [1/2,1/2,-1/2,1/2] &[0,2,0,2] &9 \\
 13& 19&  [1,1,0,0]& [0,1/2,-1/2,0]& [4,0,0,0]& 5\\
 14& 18&  [1,1,0,0]&[0,1/2,-1/2,0] & [0,4,0,0]& 5\\
 15& 17&  [1,1,0,0]&[0,1/2,-1/2,0] & [0,0,4,0]& 5\\
 16& 16& [1,1,0,0] &[0,1/2,-1/2,0] &[0,0,0,4] &5 \\
 17&9 & [1,1,0,0] &[0,0,0,0] & [2,2,2,0]& 27\\
 18& 7&  [1,1,0,0]& [0,0,0,0]& [3,1,1,1]& 32\\
 19&6 & [1,1,0,0] & [0,0,0,0]&[1,3,1,1] & 32\\
 20& 5& [1,1,0,0] & [0,0,0,0]& [1,1,3,1]&32 \\
 21&4 & [1,1,0,0] & [0,0,0,0]& [2,2,0,2]& 27\\
 22& 3& [1,1,0,0] & [0,0,0,0]& [2,0,2,2]&27 \\
 23& 1& [1,1,0,0] &[0,0,0,0] &[0,2,2,2] & 27\\
 24&  0 & [1,1,0,0] &[0,0,0,0] & [1,1,1,3]& 32\\ \hline
 
 \end{tabular}

\bigskip
{\tt KGB} elements 0, 1, 3, 4, 5, 6, 7, and 9 correspond to the compact Cartan subgroup. This means that the last eight parameters are those of the limit of discrete
series representations of $Spin(4,4)$ at the given infinitesimal character. {\tt KGB} element 108 belongs to the split CSG, so the first two are
principal series representations, one of them having four lowest $K$-types. Representations 2 through 5 are attached to the Cartan subgroup 
$\mathbb T^2\times\mathbb C^{\times}$, but they may also be realized as the four lowest $K$-type constituents of a third principal series representation. 
The remaining representations are a little more difficult to describe, but we specify
the Cartan subgroups to which they are attached. 
{\tt KGB} elements 16, 17, 18, 19 (as well as 91, 92, 93, and 94) correspond to
the CSG $\mathbb T^2\times\mathbb C^{\times}$. There are three non-conjugate CSGs isomorphic to 
$\mathbb T\times\mathbb C^{\times}\times\mathbb R^{\times}$
with corresponding  {\tt KGB} elements 51 and 52, 49 and 50, and 47 and 48. Finally, {\tt KGB} element 65 corresponnds to the CSG 
$\mathbb C^{\times}\times(\mathbb R^{\times})^2$.

\medskip

\subsection{Quasi-split case} \label{SS:quasi_split}
Here we have 11 representations at infinitesimal character $\gamma$:

\smallskip 

\begin{verbatim}
0 final parameter(x=36,lambda=[1,2,0,0]/1,nu=[0,1,0,0]/1)
1 final parameter(x=36,lambda=[1,3,0,0]/1,nu=[0,1,0,0]/1)
2 final parameter(x=32,lambda=[0,3,0,0]/1,nu=[0,1,0,0]/1)
3 final parameter(x=16,lambda=[0,3,-1,-1]/1,nu=[-1,3,-1,-1]/2)
4 final parameter(x=24,lambda=[-1,1,1,1]/1,nu=[-1,1,0,0]/1)
5 final parameter(x=24,lambda=[-2,2,1,1]/1,nu=[-1,1,0,0]/1)
6 final parameter(x=14,lambda=[-1,2,0,0]/1,nu=[-1,1,0,0]/1)
7 final parameter(x=12,lambda=[-1,2,0,0]/1,nu=[-1,1,0,0]/1)
8 final parameter(x=3,lambda=[0,1,0,0]/1,nu=[-1,2,-1,-1]/2)
9 final parameter(x=1,lambda=[0,1,0,0]/1,nu=[0,0,0,0]/1)
10 final parameter(x=0,lambda=[0,1,0,0]/1,nu=[0,0,0,0]/1)
\end{verbatim}

\medskip

Again, we translate the parameters into our standard coordinates, and calculate the lowest $K$-types of the corresponding representations. Here $K=\Sp(2) \times \SU_2$, with simple roots $\epsilon_1-\epsilon_2$, $2\epsilon_2$, $2\epsilon_3$, and we give the highest weights, as well as the dimensions.

For this real form, there are three conjugacy classes of Cartan subgroups. The {\tt KGB} elements 0, 1, 12, 14 correspond to the maximally compact CSG $\mathbb T^2\times \mathbb C^{\times}$, elements 24 and 36 correspond to the maximally  split CSG 
$\mathbb C^{\times}\times (\mathbb R^{\times})^2$, and 
elements 3, 16 and 32 correspond to the intermediate CSG the is isomorphic to $(\mathbb C^{\times})^2$.

\medskip

\begin{tabular}{| c |  c | c | c | c | c|}\hline
 \#    & {\tt x} & $\lambda$ & $\nu$ & highest weight & dimension \\ \hline
 0  &   36 & (3,2,0,0) & (1,1,0,0) & (0,0,2) & 3 \\
 1 &  36 & (4,3,0,0) & (1,1,0,0) & (0,0,0)& 1 \\
 2 & 32 & (3,3,0,0) & (1,1,0,0) & (0,0,2) & 3 \\
 3 & 16 & (2,2,-1,0) & (1/2,1, -1/2,0) & (0,0,4) & 5 \\
 4 &24 & (1,2,1,0) & (0,1,0,0) & (1,1,0) & 5 \\
 5 & 24 & (1,3,1,0) & (0,1,0,0) & (1,1,2) & 15 \\
   &.     &.               &.               & (2,0,0) & 10\\
 6 & 14 & (1,2,0,0) & (0,1,0,0) & (2,0,0) & 10 \\
 7 & 12 & (1,2,0,0) & (0,1,0,0) & (1,1,2) &15 \\
 8 & 3 & (1,1,0,0) & (0,1/2,-1/2,0) & (2,0,2) & 30 \\
 9 & 1 & (1,1,0,0) & (0,0,0,0) & (0,0,6) & 7 \\
 10 & 0 & (1,1,0,0) & ( 0,0,0,0) & (1,1,4) & 25 \\ \hline
 
 \end{tabular}

\section{Acknowledgments} 
One of the preeminent researchers in the subject of minimal representations has been Toshiyuki Kobayashi \cite{HKM}. Thus it is our pleasure 
to dedicate this paper to Toshi on the occasion of his 60th birthday. In the process of writing this paper we benefited from discussions with Jeff Adams. In particular, 
we thank him for initial {\tt atlas} computations. 
G.S. would like to thank the National University of Singapore for hospitality during 2015 when this project and its large companion project \cite{GS22} were started. 
W.T.G. is partially supported by a Singapore government MOE Tier 1 grant R-146-000-320-114.
G.S. is partially supported by a Simons Collaboration Grant 946504. 

%\newpage

\end{document}